\DeclareMathOperator{\R}{\mathbb{R}}
\DeclareMathOperator{\eps}{\varepsilon}
\numberwithin{equation}{section}
\newtheorem{theorem}{Theorem}[section]
\newtheorem{lemma}[theorem]{Lemma}
\newtheorem{corollary}[theorem]{Corollary}
\newtheorem{proposition}[theorem]{Proposition}
\theoremstyle{definition}
\newtheorem{definition}[theorem]{Definition}
\newtheorem{remark}[theorem]{Remark}
\newtheorem{assumption}[theorem]{Assumption}
\newtheorem{example}[theorem]{Example}
\newcommand{\cB}{\mathcal{B}}
\newcommand{\cD}{\mathcal{D}}
\newcommand{\cH}{\mathcal{H}}
\newcommand{\cI}{\mathcal{I}}
\newcommand{\cM}{\mathcal{M}}
\newcommand{\cV}{\mathcal{V}}
\newcommand{\bA}{\mathbb{A}}
\newcommand{\bB}{\mathbb{B}}
\newcommand{\bF}{\mathbb{F}}
\newcommand{\bG}{\mathbb{G}}
\newcommand{\bH}{\mathbb{H}}
\newcommand{\bN}{\mathbb{N}}
\newcommand{\bR}{\mathbb{R}}
\newcommand{\bfV}{\mathbf{V}}
\newcommand{\bfd}{\mathbf{d}}
\newcommand{\bff}{\mathbf{f}}
\newcommand{\bfz}{\mathbf{z}}
\newcommand{\bfzeta}{\bm{\zeta}}
\newcommand{\bfxi}{\bm{\xi}}
\newcommand{\dd}{ \mathrm{d}}
\def\eps{\varepsilon}
\DeclareMathOperator{\Tr}{Tr}
\DeclareRobustCommand{\rchi}{{\mathpalette\irchi\relax}}
\newcommand{\irchi}[2]{\raisebox{\depth}{$#1\chi$}} % inner command, used by \rchi
\newcommand{\vn}[1]{\left| \! \left| #1\right| \!\right|}
\newcommand{\ip}[2]{\left\langle #1,#2\right\rangle}
\newcommand{\ssup}[1]{\left\lceil #1 \right\rceil}
\newcommand{\iinf}[1]{\left\lfloor #1 \right\rfloor}
\newcommand{\interior}[1]{\mathrm{int}(#1)}
\renewcommand{\interior}[1]{\mathring{#1}}
\newcommand{\domain}{E}
\title[A Strict Comparison Principle on Domains with Boundary]{A Strict Comparison Principle for Integro-Differential Hamilton--Jacobi--Bellman Equations on Domains with Boundary}
\author{Serena Della Corte}
\address{Delft Institute of Applied Mathematics, Delft University of Technology, The Netherlands}
\email{s.dellacorte@tudelft.nl}
\author{Fabian Fuchs}
\address{Center for Mathematical Economics, Bielefeld University, Germany and Dipartimento di AI, Data and Decision Sciences, LUISS University, Roma, Italy}
\email{ffuchs@luiss.it}
\author{Richard C. Kraaij}
\address{Delft Institute of Applied Mathematics, Delft University of Technology, The Netherlands}
\email{r.c.kraaij@tudelft.nl}
\author{Max Nendel}
\address{Department of Statistics and Actuarial Science, University of Waterloo, Canada}
\email{mnendel@uwaterloo.ca}
\thanks{The second and fourth named authors are funded by the Deutsche Forschungsgemeinschaft (DFG, German Research Foundation) -- SFB 1283/2 2021 -- 317210226.\newline
The first and third named authors are supported by The Netherlands Organisation for Scientific Research (NWO), grant numbers 613.009.148 and VI.Vidi.233.092}
\date{\today}
\begin{document}

\begin{abstract}
    This work provides a comparison principle for viscosity solutions to boundary value problems on (partially) bounded, cylindrical spaces.\ The comparison principle is based on a test function framework, that allows for the simultaneous treatment of diffusive as well as jump terms.
    Estimates in the proof of the comparison principle incorporate the use of Lyapunov functions that act as growth bounds for the solutions, effectively yielding a theory for unbounded viscosity solutions.
    We apply the results to a wide class of parabolic equations and elliptic problems on a space with corners.\smallskip

    \noindent\textit{Keywords:}\! Comparison principle, viscosity solution, Hamilton--Jacobi--Bellman equation, parabolic equation, coupling of operators, Lyapunov function, domain with corners, mixed topology.\smallskip
    
    \noindent\textit{MSC 2020 classification:} Primary 35D40; 45K05; Secondary 35J66; 35K61; 49L25; 49Q22.
\end{abstract}

\maketitle

%%%%%%%%%%%%%%%%%%%%%%%%%%%%%%%%%%%%%

\section{Introduction}
In this work, we extend the framework from the authors' previous work \cite{DCFuKrNe24} and provide a strict comparison principle for viscosity solutions to boundary value problems of the type
\begin{equation}\label{into:eq:bvp}
    \begin{cases}
        Hf(x)=0, & x \in E,\\
        Gf(x)=0, & x \in \partial E,
    \end{cases}
\end{equation}
on a (partially) bounded, cylindrical space $E$ with a boundary satisfying the exterior sphere condition.\ This setup explicitly includes parabolic equations and elliptic equations on spaces with corners, including, e.g., finite time horizon optimal control problems, see \cite{FlSo06,YoZh99,FaGoSw17} for an overview. Other extensions in this paper are related to the treatment of unbounded solutions with growth in terms of a Lyapunov function and the introduction of the choice of doubling penalization. 

Our results recover the classical results on oblique derivative problems for second-order partial differential equations on a quadrant, see \cite{DuIs90,DuIs91}, as well as more recent generalizations, cf.\ \cite{IsKu22,BiIsSaWa17,GuMoSw19,BiBrIs24,BiBrIs24a}, and extend them to include jump terms, appearing, for example, in the context of  model uncertainty for Lévy processes, cf.\ \cite{hu.2021,Pe07,neufeld.2017,zbMATH07173248}.

Viscosity solutions were first introduced in the seminal paper \cite{CrLi83}, where Hamilton--Jacobi equations, but especially Dirichlet and Cauchy problems on bounded domains, were among the first motivating examples.\
Since then, both elliptic and parabolic equations of first and second order have been extensively treated in the literature, for foundational results see \cite{CrNe85,CrIsLi87,Is86,Is84,Li83,BaPe90} and for more specialized results, for non-standard spaces and boundary conditions, see \cite{Ba99,Is91, DuIs90,DuIs91,Ber2023}. 

We refer to the \emph{User's Guide}, cf.\ \cite{CIL92}, for an overview of uniqueness results for problems including second-order terms as well as the needed adjustments of the well-known \emph{Crandall--Ishii Lemma} when moving from elliptic to parabolic equations and from unbounded to bounded spaces. Integral operators related to jump processes have been treated in \cite{JaKa05,JaKa06,BaIm08,GuMoSw19,hu.2021}. \

Here, we phrase the Crandall--Ishii Lemma in terms of test functions and the estimate on the difference of Hamiltonians in terms of couplings of operators by adapting our previous work \cite{DCFuKrNe24} to the new setting, see Section \ref{sec:notion_our_framework}.

The main result of this work, Theorem \ref{th:comparison_HJI}, provides a strict comparison principle for boundary value problems of form \eqref{into:eq:bvp}, where, in the interior, we consider Hamiltonians $H$ of the type
\begin{multline} \label{eqn:operator_linear_intro}
    H f(x) =\ip{b(x)}{\nabla f (x)} +  \frac{1}{2} \Tr\left(\Sigma\Sigma^T (x) D^2 f(x) \right) \\ 
    + \int \left[f(x+\bfz) - f(x) - \rchi_{B_1(0)} (\bfz) \ip{\bfz}{\nabla f(x)}\right] \mu_x(\dd \bfz) + \cH(\nabla f(x))
\end{multline}
as well as their Hamilton--Jacobi--Bellman (HJB) version
\begin{align}
    Hf(x) &= \sup_{\theta \in \Theta} \{H_{\theta}f(x)-\cI(x,\theta)\}, 
\end{align}
where $H_{\theta}$ are Hamiltonians as in \eqref{eqn:operator_linear_intro} but with $\theta$ dependent coefficients and $\cI$ is an appropriate cost function.
The exact form of the boundary operator $G$ depends on the the geometry of the underlying space, see the discussion at the beginning of Section \ref{sec:main_thm}.\ In practice, the combination of coefficients of the boundary value problem that we can treat depends on the existence of an appropriate Lyapunov function, and thus indirectly on the geometry of the space as well, cf.\ Remark \ref{remark:Lyapunov_coeffs}.

The key results in this work, Propositions \ref{prop:optimizer_construction}, \ref{proposition:test_function_construction}, and \ref{proposition:basic_comparison_using_Hestimate}, and the main result, Theorem \ref{th:comparison_HJI}, rely on the existence of a Lyapunov function for the boundary value problem. While in the first two propositions the Lyapunov function is used to allow for unboundedness of solutions, in Proposition \ref{proposition:basic_comparison_using_Hestimate} it is used to reduce the problem to the boundary and, consequently, show the comparison principle result. 
The role of the Lyapunov function in the reduction step is thus, in spirit, similar to the literature on the so called \emph{twin blow-up method}, cf.\ \cite{FoImMo24,FoImMo25,LiSo17}.

Though we adapt the framework first presented in \cite{DCFuKrNe24}, the treatment of unbounded solutions and choice of doubling penalizations, but most importantly the boundedness (and potential irregularity) of the underlying space, introduce additional difficulties in the constructions of optimizers and test functions in Section \ref{sec:opt_construction} as well as the proof of the strict comparison principle in Section \ref{section:main_proof}, which are overcome in this work.

The rest of the paper is organized as follows: Section \ref{sec:framework} contains the basic definitions. Section \ref{sec:setup_mainthm} sets up the framework and states the main result as well as necessary assumptions. In Section \ref{sec:applications}, we apply the main result first to parabolic equations and then to elliptic equations on the quadrant. The modified optimizer and test function construction is contained in Section \ref{sec:opt_construction}, while the proof of the main theorem can be found in Section \ref{section:main_proof}.

\section{Preliminaries and general setting}\label{sec:framework}
\subsection{Notation and Preliminaries}\label{sec:framework_notations}

Throughout the paper, for $q_1, q_2 \in \bN$, we consider a closed and bounded set $E_1 \subseteq \bR^{q_1}$ and a closed set $E_2 \subseteq \bR^{q_2}$, and denote $E=E_1 \times E_2$. We assume that, for $i\in\{1,2\}$, the interior of $E_i$ is a domain, i.e., non-empty and connected. We endow $E$ with the product topology and metric $\bfd^2 = d_1^2+d_2^2$, where, for $i\in\{1,2\}$, $d_i$ is the Euclidian distance on $\bR^{q_i}$. Furthermore, we denote the boundary of $E$ as $\partial E$ and its interior as $\interior{E}$. We denote the set of outward normals at $x \in \partial E$ as $N_x$. Furthermore, we impose the exterior sphere condition on $E$, i.e., there exists a $r_E > 0$ such that, for all $x \in \partial E$ and outward normal vectors $n(x) \in N_x$, we have
\begin{equation}
    B_{r_E}(x+rn(x)) \cap E = \emptyset.
\end{equation}

Denote the set of upper and lower semicontinuous functions as $\mathrm{USC}(E)$ and $\mathrm{LSC}(E)$, respectively. Let $C(E)$ and $C_b(E)$ be the set of continuous and bounded continuous functions. For $k \in \bN$, let $C^k(E)$ denote the space of all real-valued functions on $E$ that are $k$-times continuously differentiable. Let $C^k_b(E)$ the set of all functions in $C^k(E)$ with bounded derivatives up to order $k\in \bN$.\ 
We write $C_u(E)$ and $C_l(E)$ for the set of continuous functions on $E$ that are uniformly bounded from above and below, respectively. 
Moreover, we write
\begin{align*}
	C_+(E) & := \{f \in C(E) \, | \, f \text{ has compact sub-level sets}\}, \\
	C_-(E) & := \{f \in C(E) \, | \, f \text{ has compact super-level sets}\}, \\
	C_c(E) & := \{f \in C(E) \, | \, f \text{ is constant outside of a compact set}\}.
\end{align*}
We furthermore define the following intersections: $C_c^2(E) = C_c(E) \cap C^2(E)$,
\begin{equation*}
	C_+^2(E) \coloneqq C_+(E) \cap C^2(E), \qquad C_-^2(E) \coloneqq C_-(E) \cap C^2(E).
\end{equation*}
Moreover, $C_c^\infty(E)$ denotes the space of all smooth functions that are constant outside of a compact set. 

For $a,b \in \bR$, we write $a \vee b \coloneqq \max \{a,b\}$ and $a \wedge b \coloneqq \min \{a,b\}$.
We denote the supremum norm by $\vn{\,\cdot\,}$, that is
\begin{equation}
    \vn{f} = \sup_{x\in E} |f(x)|,
\end{equation}
for $f \in C_b(E)$, while, for $u \in C(E)$, we use the notation
\begin{equation}
\ssup{u} \coloneqq \sup_{x \in E} u(x) \quad\text{or}\quad \iinf{u} \coloneqq \inf_{x \in E} u(x)
\end{equation}
for a supremum or infimum over the entire space and 
\begin{equation*}
   \ssup{u}_C \coloneqq \sup_{x \in C} u(x) \quad\text{or}\quad \iinf{u}_C \coloneqq \inf_{x \in C} u(x)
\end{equation*}
for a supremum or infimum over a subset $C \subseteq E$, respectively.

For two functions $f,g \colon E \to \bR$, we say that $f \in o(g)$, if, for all $\delta >0$, there exists a radius $r>0$ such that, for all $x\in E$ with $|x|>r$, we have $|f(x)|\leq \delta|g(x)|$.

As we make frequent use of them, we distinguish the direct sum on $E=E_1 \times E_2$ from the direct sum on $E \times E$: 
For functions $f_1\colon E_1 \rightarrow \bR$ and $f_2\colon E_2 \rightarrow \bR$, a vector of constants $C=(C_1, C_2) \in \bR^2$, and $x=(x_1,x_2) \in E_1\times E_2$, we denote the \emph{direct sum on $E$} as 
\begin{equation}
    C \bff(x) \coloneqq C_1f_1(x_1) + C_2f_2(x_2).
\end{equation}
If $C \in \bR$, we identify $C= (C,C)$ to define $C \bff(x)$.
For the \emph{direct sum on $E\times E$}, we use the usual notation and, for $f_1, f_2 \in C(E)$, write $f_1 \oplus f_2, f_1 \ominus f_2 \in C(E\times E)$ to mean
\begin{equation*}
    (f_1 \oplus f_2)(x, x') \coloneqq f_1(x) + f_2(x') \quad\text{and}\quad (f_1 \ominus f_2)(x, x') \coloneqq f_1(x) - f_2(x')
\end{equation*}
for all $x,x'\in E$. Furthermore, for two sets of functions $F_1, F_2 \subseteq C(E)$, we define
\begin{equation*}
    F_1 \oplus F_2 \coloneqq \left\{f_1 \oplus f_2 \,\middle|\, f_1 \in F_1, f_2 \in F_2\right\} \quad\text{and}\quad
    F_1 \ominus F_2 \coloneqq \left\{f_1 \ominus f_2 \,\middle|\, f_1 \in F_1, f_2 \in F_2\right\}.
\end{equation*}

For a vector of constants $C = (C_1, C_2) \in \bR^2$, we write
\begin{equation}
    1\boxplus C \coloneqq (1+C_1+C_2),\quad 1\boxminus C \coloneqq (1-C_1-C_2).
\end{equation}

We say that a function $\omega \colon [0,\infty) \rightarrow [0,\infty)$ is a \emph{modulus of continuity}, if $\omega$ is upper semi-continuous with $\omega(0) = 0$.
We say that a function $f$ \emph{admits a modulus of continuity} $\omega_K \colon [0,\infty) \to [0,\infty)$ on a compact set $K \subseteq E$ if, for all $x, y \in K$, we have
\begin{equation}\label{eq:modulus_admission}
    | f(x) - f(y) | \leq \omega_K(\bfd(x,y)).
\end{equation}

A function $\phi \colon E \rightarrow \bR$ is called \emph{semi-convex} with constant $\kappa \in \bR$ if, for any $x_0 \in E$, the map
\begin{equation*}
    x \mapsto \phi(x) + \frac{\kappa}{2}\bfd^2(x,x_0)
\end{equation*}
is convex. Moreover, $\phi$ is called \emph{semi-concave} with constant $\kappa \in \bR$ if $-\phi$ is semi-convex with constant $-\kappa$.

We say that a function $f \in C(E,\bR^{q_1+q_2})$ is \emph{one-sided Lipschitz} if there is some constant $C \in \bR$ such that, for all $x,y \in E$, 
\begin{equation}\label{def:OneSideLip}
    \ip{x-y}{f(x) - f(y)} \leq C \bfd^2(x,y).
\end{equation}

For any $z \in E$, let $s_z : E \rightarrow \bR^{q_1+q_2}$ be the \emph{shift map}
\begin{equation}\label{definition:shift_map}
    s_z(x) = x-z.
\end{equation}

For any $z_1, z_2 \in E$ and function $\Phi\colon E^2 \rightarrow \bR$, let
\begin{equation} \label{eqn:shiftPhi}
        \Phi_{z_1, z_2} (x,y) \coloneqq \Phi\left(s_{z_1} (x), s_{z_2} (y)\right).
\end{equation}

\subsection{Operators}

We consider operators $H\subseteq C(E) \times C(E)$, where we identify $H$ by its graph. As usual, the \emph{domain} of $H$ is given by
\begin{equation*}
    \cD(H) \coloneqq \left\{f\in C(E)\,\middle|\, \exists \,g\in C(E)\colon (f,g)\in H\right\}.
\end{equation*}
Let $H_1, H_2 \subseteq C(E) \times C(E)$. We define
\begin{equation*}
    H_1 + H_2 \coloneqq \left\{(f,g_1+g_2)\, \middle|\, (f,g_1) \in H_1, (f,g_2) \in H_2 \right\},
\end{equation*}
which is an operator with domain
\begin{equation*}
    \cD(H_1 + H_2) \coloneqq \cD(H_1) \cap \cD(H_2).
\end{equation*}
We say that $H$ is \emph{linear on its domain} if, for any $f,g \in \cD(H)$ and $a \in \bR$ such that $af + g \in \cD(H)$, we have
\begin{equation}\label{definition:extended_linear}
    H \left(af+g\right) = aHf + Hg.
\end{equation}

\subsection{Viscosity solutions}

For operators $F_1 \subseteq C_l(E) \times C(E)$, $G_1 \subseteq C_l(\partial E) \times C(\partial E)$ and $F_2 \subseteq C_u(E) \times C(E)$, $G_2 \subseteq C_u(\partial E) \times C(\partial E)$, we consider the boundary value problems
\begin{equation}
    H_1f(x)=\begin{cases}
        F_1f(x), &\text{if } x \in E,\\
        G_1f(x), &\text{if } x \in \partial E,
    \end{cases} 
    \quad\text{and}\quad
    H_2f(x)=\begin{cases}
        F_2f(x), &\text{if } x \in E,\\
        G_2f(x), &\text{if } x \in \partial E,
    \end{cases} 
\end{equation}
and study the equations
\begin{align}
    H_1 f & \leq 0, \label{eqn:HJ_subsolution} \\
    H_2 f & \geq 0. \label{eqn:HJ_supersolution}
\end{align}

The notion of viscosity solution is built upon the maximum principle.
\begin{definition}[Maximum principle]\label{def:max_principle}
    We say that operators $H_1 \subseteq C(E) \times \big(C(\interior{E}) \cap \mathrm{USC}(E)\big)$ and $H_2 \subseteq C(E) \times \big(C(\interior{E}) \cap \mathrm{LSC}(E)\big)$ satisfy the \emph{maximum principle} if, for all $f_1,f_2 \in \cD(H_1)$ and $x_0 \in E$ with
    \begin{equation*}
        f_1(x_0) - f_2(x_0) = \sup_{x \in E} \{f_1(x) - f_2(x)\},
    \end{equation*}
    we have
    \begin{equation*}
        H_1 f_1(x_0) \leq H_1 f_2(x_0)
    \end{equation*}
    and, analogously, for all $f_1,f_2 \in \cD(H_2)$ and $x_0 \in E$ with
    \begin{equation*}
        f_1(x_0) - f_2(x_0) = \inf_{x \in E} \{f_1(x) - f_2(x)\},
    \end{equation*}
    we have
    \begin{equation*}
        H_2 f_1(x_0) \geq H_2 f_2(x_0).
    \end{equation*}
\end{definition}
Observe that every operator $H \subseteq C(E) \times C(E)$ that satisfies the maximum principle is single-valued, i.e., for all $f\in \cD(H)$,
\begin{equation*}
\#\{g\in C(E)\mid (f,g)\in H\}=1.
\end{equation*}

As we want to treat unbounded solutions, we have to specify growth bound for the solutions. We phrase this growth in terms of a function $\cV$ with compact sublevel sets. Later, the containment function, cf.\ Definition \ref{definition:perturbation_containment}, will play the role of $\cV$. In the definition of viscosity solution below, intuitively, one takes $f \approx \mathfrak{f} +\cV$ in the definition of subsolutions and $f \approx \mathfrak{f} - \cV$ in the definition of supersolutions, where $\mathfrak{f}$ is some bounded smooth function, e.g., $\mathfrak{f} \in C_c^\infty (E)$.

\begin{definition}[Viscosity sub- and supersolutions] \label{definition:viscosity_solutions} \label{def:viscosity_solution}
    Let $F_1 \subseteq C_l(E) \times C(E)$, $G_1 \subseteq C_l(\partial E) \times C(\partial E)$ and $F_2 \subseteq C_u(E) \times C(E)$, $G_2 \subseteq C_u(\partial E) \times C(\partial E)$ be operators with domains $\cD(F_1)$, $\cD(G_1)$, $\cD(F_2)$, and $\cD(G_2)$, respectively. Then, for $f_1 \in \cD(F_1)$ with $f_1 \vert_{\partial E} \in \cD(G_1)$, let
    \begin{equation}\label{eq:bvp_vis_subsolution}
        H_1 f_1(x) = 
        \begin{cases}
            F_1f_1(x), &\text{if } x \in E,\\
            \min\big\{F_1 f_1(x),\,G_1f_1(x)\big\}, &\text{if } x \in \partial E,
        \end{cases}
    \end{equation}
    be an operator with domain $\cD(H_1) = \cD(F_1) \cap \cD(G_1)$ and, for $f_2 \in \cD(F_2)$ with $f_2 \vert_{\partial E} \in \cD(G_2)$, let 
    \begin{equation}\label{eq:bvp_vis_supersolution}
        H_2 f_2(x) = 
        \begin{cases}
            F_2f_2(x), &\text{if } x \in E,\\
            \max\big\{F_2 f_2(x),\,G_2f_2(x)\big\}, &\text{if } x \in \partial E
        \end{cases}
    \end{equation}
    be an operator with domain $\cD(H_2) = \cD(F_2) \cap \cD(G_2)$.

    \begin{enumerate}[(a)]
        \item An upper semicontinuous function $u \colon E\to \R$ is called a \emph{(viscosity) subsolution} to \eqref{eqn:HJ_subsolution} if, for all $(f,g) \in H_1$ with $f \in \cD(H_1) \cap C_+(E)$, there exists a sequence $(x_n)_{n\in \bN} \subseteq E$ such that
        \begin{gather*}
            \lim_{n \rightarrow \infty} u(x_n) - f(x_n)  = \sup_{x\in E} u(x) - f(x), \\
            \limsup_{n \rightarrow \infty} g(x_n) \leq 0.
        \end{gather*}
        
        \item A lower semicontinuous function $v \colon E\to \R$ is called a \emph{(viscosity) supersolution} to \eqref{eqn:HJ_supersolution} if, for all $(f,g) \in H_2$ with $f \in \cD(H_2) \cap C_-(E)$, there exists a sequence $(x_n)_{n\in \bN} \subseteq E$ such that
        \begin{gather*}
            \lim_{n \rightarrow \infty} v(x_n) - f(x_n) = \inf_{x\in E} v(x) - f(x), \\
            \liminf_{n \rightarrow \infty} g(x_n) \geq 0.
        \end{gather*}
    \end{enumerate}

    A function $u \in C(E)$ is called a \emph{(viscosity) solution} if it is both a subsolution to \eqref{eq:bvp_vis_subsolution} and a supersolution to \eqref{eq:bvp_vis_supersolution}.
\end{definition}

Working with test functions that have compact sub- or superlevel sets respectively, an approximating sequence can be replaced by an optimizing point in the definition of sub- or supersolution. See \cite[Lemma D.1]{DCFuKrNe24}.

\begin{remark}
    Note that the subsolution $u$ and supersolution $v$ as well as their respective test functions $f$ in the above definition might be unbounded. For a setting with bounded sub- and supersolutions, but especially bounded test functions, we refer to the setting in \cite{DCFuKrNe24} with sequential denseness.
\end{remark}

Associated with the definition of viscosity solutions, we introduce the comparison principle on spaces with boundary, which implies uniqueness in the viscosity sense for solutions to equations of the form $H f = 0$. We additionally introduce the strict comparison principle, which is a stronger notion. For a discussion on the strict comparison principle, we refer to \cite{DCFuKrNe24}.

\begin{definition} \label{definition:comparison}
    We say that the equations \eqref{eqn:HJ_subsolution} and \eqref{eqn:HJ_supersolution} satisfy 
    
    \begin{enumerate}[(a)]
        \item the \emph{comparison principle} if, for any subsolution $u$ to \eqref{eqn:HJ_subsolution} and any supersolution $v$ to \eqref{eqn:HJ_supersolution}, we have 
    \begin{equation*}
        \sup_{x\in E} u(x) - v(x) \leq \sup_{x \in \partial E} u(x) - v(x).
    \end{equation*}
    \item  the \emph{strict comparison principle} if, for any subsolution $u$ to \eqref{eqn:HJ_subsolution}, any supersolution $v$ to \eqref{eqn:HJ_supersolution}, any compact set $K\subseteq E$ and $\eps = (\eps_1, \eps_2)$ with $\eps_1, \eps_2 >0$, there exist a compact set $\widetilde{K} = \widetilde{K}(K,\varepsilon, u, v) \subseteq \partial E$ and a constant $C = C(u,v)$ such that we have 
    \begin{equation*}
        \sup_{x\in K} u(x) - v(x)  \leq  \varepsilon C + \sup_{x \in \widetilde{K}} u(x) - v(x).
    \end{equation*}
    \end{enumerate}
\end{definition}

Note that also in the setting with boundary, the strict comparison principle implies the comparison principle: The strict comparison principle implies that, for any $x_0 \in E$ and $\eps = (\eps_1, \eps_2)$ with $\eps_1, \eps_2 >0$, there exists a constants $C = (C_1, C_2)$, independent of $\eps$, and a compact set $\widetilde{K} \subseteq \partial E$ such that
\begin{equation*}
    u(x_0) - v(x_0)  \leq  \varepsilon C + \sup_{x \in \widetilde{K}} u(x) - v(x) \leq \varepsilon C + \sup_{x \in \partial E} u(x) - v(x).
\end{equation*}
Now, letting $\eps_1, \eps_2 \downarrow 0$ and taking the supremum over all $x_0 \in E$, the comparison principle follows.
\section{Setup and Main Results}\label{sec:setup_mainthm}
In this section, we adapt the framework in \cite{DCFuKrNe24} to the setting of (partially) bounded spaces. Our main result, Theorem \ref{th:comparison_HJI}, is a strict comparison principle for equations of the type
\begin{equation}\label{eq:setup:main_operator}
    \bH f(x) = \sup_{\theta \in \Theta} \big\{ \bH_\theta f(x) - \cI(x,\theta) \big\} = \sup_{\theta \in \Theta} \big\{\bA_{\theta} f(x) + \bB_{\theta} f(x) - \cI(x,\theta) \big\}
\end{equation}
in Hamilton--Jacobi--Bellman (HJB) form.

Section \ref{sec:notion_our_framework} introduces the framework and its core concepts.\ Section \ref{sec:main_thm} states the strict comparison principle, while the necessary technical assumptions are deferred to Section \ref{subsection:regularity_and_compatibility}.

\subsection{Concepts of the framework}\label{sec:notion_our_framework} 

In this subsection, we give the main definitions underlying our framework. We consider operators of the form 
\[H = A+B,\]
where $A$ is a stochastic part, which we can couple in the sense of the Definition \ref{def:coupling} below, and $B$ is a deterministic part, which we require to be a \textit{convex semi-monotone} operator in the sense of Definition \ref{definition:first_order} below.\ Later, $H$ will play the role of $\bH_\theta$ in equation \eqref{eq:setup:main_operator}.

\begin{definition}[Coupling]\label{def:coupling:only_coupling}
    Let $A \subseteq C(E) \times C(E)$ and $\widehat{A} \subseteq C(E^2) \times C(E^2)$ be linear on their respective domains. We say $\widehat{A}$ is a \emph{coupling of $A$} if  $\cD(A) \oplus \cD(A) \subseteq \cD(\widehat{A})$ and, for any $f_1, f_2 \in \cD(A)$, we have
    \begin{equation*}
        \widehat{A} \left(f_1 \oplus f_2\right) = A f_1 + A f_2.
    \end{equation*}
\end{definition}

\noindent Recall the notation $\Phi_{z,z'}$ introduced in \eqref{eqn:shiftPhi}.

\begin{definition}[$\Phi$-controlled growth]\label{def:coupling:growth}
    Let $\widehat{A} \subseteq C(E^2) \times C(E^2)$. For some function $\Phi \colon E^2 \rightarrow \bR$, we say that $\widehat{A}$ has \emph{$\Phi$-controlled growth} if, for any $\alpha = (\alpha_1, \alpha_2)$ with $\alpha_1, \alpha_2 > 1$ and $z,z'\in E$, we have $\alpha\Phi_{z,z'} \in \cD(\widehat{A})$.
    
    In addition, for any compact set $K \subseteq E$, there exists a modulus of continuity $\omega_{\widehat{A}, K}\colon [0, \infty) \rightarrow [0, \infty)$ and $x,x',y,y' \in K$ such that
    \begin{multline}
        \widehat{A}\left(\alpha\Phi_{x-y,\ x'-y'}\right)(x,x') \leq \omega_{\widehat{A}, K}\Big(\alpha \big( (\bfd(x,y) + \bfd(y', x'))^2 + \Phi(y,y')\big) \\
        + \left(\bfd(x,y) + \bfd(y', x') + \Phi(y,y') \right)\Big).
    \end{multline}
\end{definition}

\begin{definition}[$\Phi$-controlled growth coupling]\label{def:coupling}
    Let $A \subseteq C(E) \times C(E)$ and $\widehat{A} \subseteq C(E^2) \times C(E^2)$ be linear on their respective domains. We say $\widehat{A}$ is a \emph{$\Phi$-controlled growth coupling of $A$} if the following properties are satisfied:
    \begin{enumerate}[(a)]
        \item $\widehat{A}$ satisfies the \emph{maximum principle}, cf. Definition \ref{def:max_principle}.
        \item $\widehat{A}$ is a \emph{coupling} of $A$, cf. Definition \ref{def:coupling:only_coupling}.
        \item $\widehat{A}$ has \emph{$\Phi$-controlled growth}, cf. Definition \ref{def:coupling:growth}.
    \end{enumerate}
\end{definition}

\begin{definition}[Local first-order operator]\label{def:first_order:only_first_order}
    We say that $B \subseteq C(E) \times C(E)$ is a \emph{local first-order} operator if there exists a continuous map $\cB : E \times \bR^{q_1+ q_2} \rightarrow \bR$ such that, for any $f \in \cD(B)$, we have $Bf(x) = \cB(x,\nabla f(x))$.
\end{definition}

\begin{definition}[Local semi-monotonicity w.r.t.\ $\Phi$]\label{def:oneSidedLipschitz}
    Let $B \subseteq C(E) \times C(E)$ be local first-order for some $\cB$, cf. Definition \ref{def:first_order:only_first_order}.
    For some function $\Phi \colon E^2 \rightarrow \bR$, we say that $B$ is \emph{locally semi-monotone w.r.t.\ $\Phi$} if, for any compact set $K \subseteq E$, there exists a modulus of continuity $\omega_{\cB,K}\colon [0,\infty)\to [0,\infty)$ such that, for all $x,y \in K$ and $\alpha = (\alpha_1, \alpha_2)$ with $\alpha_1, \alpha_2 > 1$, we have
    \begin{equation*}
        \cB(x,\alpha(x-x')) - \cB(y,\alpha(x-x')) \leq \omega_{\cB,K}\left(\alpha \big(\bfd^2(x,x')\wedge \Phi(x,x')\big) + \bfd(x,x') + \Phi(x,x') \right).
    \end{equation*}
\end{definition}

\begin{definition}[Convex semi-monotone operator w.r.t.\ $\Phi$] \label{definition:first_order}
    We say that $B \subseteq C(E) \times C(E)$ is a \emph{convex semi-monotone operator w.r.t.\ $\Phi$} if the following properties are satisfied:
    \begin{enumerate}[(a)]
        \item $B \subseteq C(E) \times C(E)$ is \emph{locally semi-monotone w.r.t.\ $\Phi$} for some $\cB$, cf. Definition \ref{def:oneSidedLipschitz}.
        \item For all $x\in E$, the map $p \mapsto \cB(x, p)$ is \emph{convex}.
    \end{enumerate}
\end{definition}

As before in \cite{DCFuKrNe24}, the proof of the comparison principle will use a perturbed doubling-of-variables type argument on the optimization problem $\sup (u-v)$.\ 
To that end, we need a doubling penalization and the following perturbation functions:
\begin{itemize}
    \item We need a function $\bfV$ that allows us to (1) work with compact sets and (2) serves as a growth bound if $E_2$ is unbounded, see Definition \ref{definition:perturbation_containment}.
    \item Since we want to construct optimizers using a Jensen-type result, cf.\ Proposition \ref{proposition:Jensen_Alexandrov_cutoff}, we need families of locally linear perturbation $\bfzeta_{p, z}$ and localization $\bfxi_z$, cf.\ Definition \ref{definition:perturbation_first_second_order}.
\end{itemize}

\begin{definition}[Doubling penalization]\label{def:doubling_pen}
    For $i \in \{1,2\}$, we say that a function $\phi_i \colon E_i \times E_i \rightarrow [0, \infty)$ is a \emph{doubling penalization for $E_i$} if
    \begin{enumerate}[(a)]
        \item $\phi_i$ is lower semi-continuous and semi-concave with semiconcavity constant $\kappa_{\phi_i}$.
        \item $\phi_i$ separates points, i.e., for all $x_i, y_i \in E_i$ with $x_i \neq y_i$, we have $\phi_i(x_i,x_i) = 0$ and $\phi_i(x_i,y_i) > 0$.
    \end{enumerate}
    We say that a function $\Phi \colon E \times E \rightarrow [0, \infty)$ is a \emph{doubling penalization} if
    \begin{equation}
        \Phi(x,y) = \phi_1(x_1, y_1) + \phi_2(x_2, y_2),
    \end{equation}
    where $x,y \in E$ and $\phi_1, \phi_2$ are doubling penalization for $E_1$ and $E_2$.
\end{definition}

\begin{example}[Doubling penalization]
    The prototypical choice of doubling penalization is $\phi_i (x,x') = \frac{1}{2}d^2_i(x,x')$ for $i \in \{1,2\}$.
\end{example}

\begin{definition} \label{definition:perturbation_containment}
    For $i \in \{1,2\}$, we call $V_i : E_i \rightarrow \bR$ a \emph{containment function for $E_i$} if we have
    \begin{enumerate}[(a)]
        \item $V_i$ is bounded from below, 
        \item $V_i$ is semi-concave with semi-concavity constant $\kappa_{V_i}$,
        \item for every $c \in \bR$, the set $\{y \, | \, V_i(y) \leq c\}$ is compact. 
    \end{enumerate}

    We the call $\bfV \colon E \rightarrow \bR$ a \emph{containment function} if
    \begin{equation}
        \bfV(x) = V_1(x_1) + V_2(x_2),
    \end{equation}
    where $x \in E$ and $V_1, V_2$ are containment functions for $E_1$ and $E_2$.
\end{definition}

Depending on the operator and the underlying space, typical examples of containment functions in the literature are $V_i (x) \approx x$ or $V_i(x) \approx x^2$ but also $V_i(x) \approx \log(1+x^2)$ for $i \in \{1,2\}$. Later, in Section \ref{sec:applications}, we will use the following containment functions
\begin{example}[Containment functions]
    Let $i \in \{1,2\}$. Depending on the operator, if $\partial E_i \neq \emptyset$, we consider
    \begin{align}
        V_i(x) = x \quad\text{or}\quad V_i (x) = \log \left( 1 + \frac{(x+1.5)^2}{2} \right) +C_{H},
    \end{align}
    where $C_{H}$ is a constant depending on the operator.
    For $\partial E_i = \emptyset$, we consider
    \begin{equation}
        V_i(x) = \log \left(1+\frac{1}{2}x^2\right).
    \end{equation}
    Note that all examples have semi-concavity constant $\kappa_{V_i} = 1$.
\end{example}

In the course of the proof, we use that the containment function $\bfV$ acts like a Lyapunov function for the Hamiltonian $H$, which means that the action of the operator on the containment function should be bounded. In the case that the underlying space is (partially) bounded in $E_i$, for some $i\in \{1,2\}$, we additionally require that the action of the operator is negative in that direction.

\begin{remark}\label{remark:Lyapunov_tradeoff}
    Note that, in the case that $E_i$ is unbounded, there is a tradeoff when choosing a containment function between allowing for stronger growth of the solution and still being able to control the action of the operator on the containment function, i.e.\ the requirement that the containment functions need to be a Lyapunov function. Consider, for example, the operator $Hf (x) = \ip{x}{\nabla f (x)}$, which has bounded action for $\log(1+x^2)$ but not $x^2$.
\end{remark}

The next definition contains the perturbations for the Jensen-type perturbation procedure, cf.\ Proposition \ref{proposition:Jensen_Alexandrov_cutoff}, which we use to find new optimizers, in which the second derivatives exist.\ 
As before, we use families of locally linear functions $\bfzeta_{p, z}$ as a first-order penalization and families of localization functions $\bfxi_z$ as second-order penalization.
The procedure is performed for $E_1$ and $E_2$ separately, the perturbations have the structure of a direct sum on $E$.
The structure of the penalizations is motivated by the usual prototypical examples of lines and parabolas, i.e.
\begin{align}
    \bfzeta_{z,p}(x) = \zeta_{1,z_1,p_1}(x_1) + \zeta_{2,z_2,p_2}(x_2) &= \ip{p_1}{x_1-z_1} + \ip{p_2}{x_2-z_2}, \label{eq:perturbation_1}\\
    \bfxi_z(x) = \xi_{1,z_1}(x_1) + \xi_{2,z_2}(x_2) &= \frac{1}{2} \big( d_1^2 (x_1,z_1) + d_2^2 (x_2, z_2) \big), \label{eq:perturbation_2}
\end{align}
centered at some $z \in E$ and with slopes $p=(p_1, p_2) \in \bR^{q_1 + q_2}$.

\begin{definition}[Point penalizations] \label{definition:perturbation_first_second_order}
    Let $i \in \{1,2\}$. We call collections of maps $\{\zeta_{i,z_i,p_i}\}_{z_i \in E_i, p_i \in \bR^{q_i}} \subseteq C(E_i)$ and $\{\xi_{i, z_i}\}_{z_i \in E_i} \subseteq C^1(E_i)$, $\zeta_{i,z_i,p_i} : E_i \rightarrow \bR$ and $\xi_{i,z_i} : E_i \rightarrow \bR$ sets of \emph{first} and \emph{second order point penalizations on $E_i$}, respectively, if there exist constants $R_i>0$ and $\kappa_{\xi_i} > 0$ such that for all $z_i \in E_i$:
    \begin{enumerate}[(a)]
        \item \label{item:definition:penalization:linear} $\zeta_{i,z_i,p_i}$ is linear in terms of $p_i$ around $z_i$:
            \begin{equation*}
                \zeta_{i,z_i,p_i}(y_i) = \ip{p_i}{y_i-z_i}
            \end{equation*}
            if $y_i \in B_{R_i}(z_i) \cap E_i$.
        \item \label{item:definition:penalization:semi-concave} The map $\xi_{i,z_i}$ is semi-concave with constant $\kappa_{\xi_i}$.
        \item \label{item:definition:penalization:second_order} The map $\xi_{i,z_i}$ is a penalization away from $z_i$:
            \begin{equation*}
                \xi_{z_i}(z_i) = 0, \qquad \xi_{z_i}(y_i) > 0, \qquad \text{if } y_i \neq z_i.
            \end{equation*}
        \item \label{item:definition:penalization:domination}  We have
            \begin{equation*}
                \inf_{|p_i| \leq 1} \inf_{y_i \in E_i \setminus B_{R_i}(z_i)} \xi_{i,z_i}(y_i) + \zeta_{i,z_i,p_i}(y_i) > 0.
            \end{equation*}
    \end{enumerate}

    We simply call $\{\bfzeta_{z,p}\}_{z \in E, p \in \bR^{q_1 + q_2}} \subseteq C(E)$ and $\{\bfxi_{z}\}_{z \in E} \subseteq C^1(E)$ with
    \begin{align}
        \bfzeta_{z,p}(x) &= \zeta_{1,z_1,p_1}(x_1) + \zeta_{2,z_2,p_2} (x_2),\\
        \bfxi_{z}(x) &= \xi_{1, z_1}(x_1) + \xi_{2, z_2} (x_2),
    \end{align}
    sets of \emph{first} and \emph{second order point penalizations}, if the collections $\{\zeta_{1,z_1,p_1}\}_{z_1 \in E_1, p_1 \in \bR^{q_1}}$, $\{\zeta_{2,z_2,p_2}\}_{z_2 \in E_2, p_2 \in \bR^{q_2}}$, $\{\xi_{1, z_1}\}_{z_1 \in E_1}$, and $\{\xi_{2, z_2}\}_{z_2 \in E_2}$ are sets of first and second order point penalizations on $E_1$ and $E_2$, respectively.
    
    For any given $z_0,z_1 \in E$ and $p \in \bR^q$, we consider the maps
    \begin{align}\label{definition:Jensen_penalization_Xi}
        \Xi^0(y) = \Xi_{z_0,p}^0(y) &\coloneqq \bfxi_{z_0}(y) + \bfzeta_{z_0,p}(y), \\
         \Xi(y) = \Xi_{z_0,p,z_1}(y) &\coloneqq \bfxi_{z_0}(y) + \bfzeta_{z_0,p}(y) + \bfxi_{z_1}(y).
    \end{align}
\end{definition}

For the examples in Section \ref{sec:applications}, we work with the following two choices for $\bfzeta$ and $\bfxi$. The first collection follows \eqref{eq:perturbation_1} and \eqref{eq:perturbation_2} and is a usual choice in the literature. The second collection is based on a cut-off of the first collection. We do this to control the action of non-local integral operators on the perturbations.

\begin{example}[Jensen penalization functions] \label{example:keyPenalizations}
\ Consider the following two collections of penalization functions:
    \begin{description}
    \item[Collection 1]\label{definition:canonical_penalization} The base penalizations are
          \begin{align*}
        \bfzeta_{z,p}(x) & = \ip{p_1}{x_1-z_1} + \ip{p_2}{x_2-z_2},\\
        \bfxi_{z}(x) & = \frac{1}{2} \bfd^2(x,z).
    \end{align*}
    \item[Collection 2] \label{definition:levy_penalizations}
    Let $R'' > R' > R >2 $. Let $\overline{\ell}: [0, \infty) \rightarrow [0, \infty)$ be a smooth function satisfying $\overline{l}(r) = 1$ for $r < R'$ and $\overline{l}(r) = 0$ for $x > R''$. 
    Let
    \begin{align*}
        \overline{\bfzeta}_{p, z} (x) &= \overline{\ell} (\bfd(x, z)) \big(\ip{p_1}{x_1-z_1} + \ip{p_2}{x_2-z_2}\big),\\
        \overline{\bfxi}_z (x) &= (1-\overline{\ell}(\bfd(x, z)))(R'' + 1)^2 + \overline{\ell}(\bfd(x, z)) \frac{1}{2} \bfd^2(x,z).
    \end{align*}
    \end{description}
\end{example}

\subsection{Comparison principle}\label{sec:main_thm}
The following theorem is a variant of \cite[Theorem 3.1]{DCFuKrNe24} for (partially) bounded spaces and the main result of this work. Our goal is to show comparison for a boundary value problem of form
\begin{equation}\label{eq:main_th_bvp}
    \begin{cases}
        -\bH f(x)=0, & \text{if } x \in E,\\
        -\bG f(x)=0, & \text{if } x \in \partial E,
    \end{cases}
    \tag{BVP}
\end{equation}
with $\bH \subseteq C(E) \times C(E)$ and some operator $\bG$. In many cases, like the parabolic problems considered in Section \ref{sec:parabolic_example}, we have $\bG \subseteq C(\partial E) \times C(\partial E)$. However, if the boundary of $E$ itself has a subset that behaves like a boundary, denoted by $\partial\partial E$, $\bG$ can itself be a boundary value problem. In this case, we essentially treat a problem of form 
\begin{equation}\label{eq:main_th_bvp2}
    \begin{cases}
        -\bH f(x)=0, & \text{if } x \in E,\\
        -\bG f(x)=0, & \text{if } x \in \partial E,\\
        -\bF f(x)=0, & \text{if } x \in \partial\partial E,
    \end{cases}
    \tag{BVP'}
\end{equation}
where now $\bH \subseteq C(E) \times C(E)$, $\bG \subseteq C(\partial E) \times C(\partial E)$, and $\bF \subseteq C(\partial \partial E) \times C(\partial \partial E)$, see Section \ref{sec:elliptic_quardant}.

In the viscosity sense, the treatment of both \eqref{eq:main_th_bvp} and \eqref{eq:main_th_bvp2} break down to (iteratively) showing comparison between subsolutions $u$ to the subsolution operator
\begin{equation}\label{eq:main_th_subsol_op}
    -\bH_1 f(x) = \begin{cases}
        -\bH f(x), & \text{if } x \in E,\\
        \min\{-\bG_1 f(x), -\bH f(x)\}, & \text{if } x \in \partial E,
    \end{cases}
\end{equation}
where $\bH_1 \subseteq C(E) \times \big(C(\interior{E}) \cap \mathrm{LSC}(E)\big)$,
and supersolutions $v$ to the supersolution operator
\begin{equation}\label{eq:main_th_supersol_op}
    -\bH_2 f(x) = \begin{cases}
        -\bH f(x), & \text{if } x \in E,\\
        \max\{-\bG_2 f(x), -\bH f(x)\}, & \text{if } x \in \partial E,
    \end{cases}
\end{equation}
where $\bH_2 \subseteq C(E) \times \big(C(\interior{E}) \cap \mathrm{USC}(E)\big)$
with $\bG_1 \subseteq C(\partial E) \times \mathrm{LSC}(\partial E)$, $\bG_2 \subseteq C(\partial E) \times  \mathrm{USC}(\partial E)$, and $\bH \subseteq C(E) \times C(E)$ of the form
\begin{align*}
    \bH f(x) & = \sup_{\theta \in \Theta} \big\{\bA_{\theta} f(x) + \bB_{\theta} f(x) - \cI(x,\theta) \big\}.
\end{align*}

Theorem \ref{th:comparison_HJI} below provides a comparison principle for such an inductive step.

Note that the form of the problem is chosen to match the notions of sub- and supersolutions as well as to perform analogous estimates in the same direction as in \cite{DCFuKrNe24}.

\begin{theorem}[Strict comparison principle]\label{th:comparison_HJI}
    Consider a doubling penalization $\Phi$, a containment function $\bfV$, and penalization functions $\{\bfzeta_{z,p}\}_{z \in E, p \in \bR^{q_1+q_2}}$ and $\{\bfzeta_z\}_{z \in E}$. Let $\bH_1 \subseteq C(E) \times \big(C(\interior{E}) \cap \mathrm{LSC}(E)\big)$ as in \eqref{eq:main_th_subsol_op} and $\bH_2 \subseteq C(E) \times \big(C(\interior{E}) \cap \mathrm{USC}(E)\big)$ as in \eqref{eq:main_th_supersol_op} with $\bH \subseteq C(E) \times C(E)$ in Hamilton--Jacobi--Bellman form
    \begin{align}
        \bH f(x) &= \sup_{\theta \in \Theta} \big\{\bA_{\theta} f(x) + \bB_{\theta} f(x) - \cI(x,\theta) \big\}
    \end{align}
    with $\Theta$ a compact, metric space, and boundary operators $\bG_1 \subseteq C(\partial E) \times \mathrm{LSC}(\partial E)$ and $\bG_2 \subseteq C(\partial E) \times \mathrm{USC}(\partial E)$ all satisfying the technical Assumptions \ref{assumption:domain_setup} and \ref{assumption:compatibility} below, and 
    \begin{enumerate}[(a)]
        \item \label{item:assumption:Acoupling} For all $\theta \in \Theta$, $\bA_{\theta}$ is linear on its domain and has a $\Phi$-controlled growth coupling $\widehat{\bA}_{\theta}$ as in Definition \ref{def:coupling} with a modulus uniform in $\theta$. 
        \item \label{item:assumption:Bmonotone} For all $\theta \in \Theta$, $\bB_{\theta}$ is convex semi-monotone operators w.r.t.\ $\Phi$ as in Definition \ref{definition:first_order} with a modulus uniform in $\theta$.      
        \item \label{item:assumption_directHJI_lsc} The cost functional $\cI \colon E \times \Theta \rightarrow (-\infty, \infty]$ is lower semi-continuous in $(x,\theta)$ and admits a modulus of continuity in $x$ uniformly in $\theta$.
        \item \label{item:assumption_Lyapunovfunction} For $i\in \{1,2\}$, $V_i$ is a \emph{Lyapunov function} for $\bH$: We have $V_i \in \cD(\bH)$ and
        \begin{equation}\label{eq:mainth_HJI_bound}
            c_{V_i} \coloneqq \sup_{x \in E} \bH V_i(x) < \infty.
        \end{equation}
        If $\partial E \neq \emptyset$, we additionally have $c_{V_i} < 0$.
    \end{enumerate}

    \noindent Consider the pair of equations
    \begin{align} 
        -\bH_1 f \leq 0, \label{eqn:mainTheoremHJ_1}\\
        -\bH_2 f \geq 0. \label{eqn:mainTheoremHJ_2}
    \end{align}
    Let $u$ and $v$ be sub- and supersolutions to \eqref{eqn:mainTheoremHJ_1} and \eqref{eqn:mainTheoremHJ_2}, respectively, if $E_2$ is unbounded, with $u,v \in o(V_2)$.
    Then, for any compact set $K \subseteq E$ and $\eps = (\eps_1, \eps_2)$ with $\eps_1, \eps_2 \in (0,1)$, we have
    \begin{equation}\label{eq:th:final}
        \sup_{x\in K} u(x) - v(x)  \leq  \varepsilon \widetilde{C}^K + \sup_{x \in \widetilde{K}} u_1(x) - u_2(x),
    \end{equation}
    where $\widetilde{K} = \widetilde{K}(K, \eps, u,v) \subseteq \partial E$ is a compact set given by
    \begin{multline*}
        \widetilde{K} \coloneqq \left\{z \in \partial E \, \middle| \,  \frac{1}{1\boxminus\eps} \left(\eps \bfV(z) - u(z)\right) + \frac{1}{1\boxplus\eps} \left(\eps \bfV(z) - v(z) \right)   \right. \\
        \leq \left. \frac{\eps}{1\boxminus\eps} \ssup{\bfV-u}_K + \frac{\eps}{1\boxplus\eps} \ssup{\bfV-v}_K - \ssup{u-v}_K \right\}
    \end{multline*}
    and $\widetilde{C}^K = (\widetilde{C}_{1}^K, \widetilde{C}_{2}^K)$ with 
    \begin{equation}
        \widetilde{C}_{1}^K=\widetilde{C}_{1}^K(u-V_1,v-V_1) \quad\text{and}\quad \widetilde{C}_{2}^K=\widetilde{C}_{2}^K(u-V_2,v-V_2).
    \end{equation}
    In particular, the strict comparison principle holds for \eqref{eqn:mainTheoremHJ_1} and \eqref{eqn:mainTheoremHJ_2}. 
\end{theorem}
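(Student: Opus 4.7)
The plan is to apply the helper Propositions \ref{prop:optimizer_construction}, \ref{proposition:test_function_construction}, and \ref{proposition:basic_comparison_using_Hestimate} to a doubled, penalized functional, and then to close the resulting $\bH$-estimate using the coupling hypothesis on each $\bA_\theta$ and the convex semi-monotonicity of each $\bB_\theta$. For fixed $\eps=(\eps_1,\eps_2)\in(0,1)^2$ and a compact $K\subseteq E$, I work with a functional of the form
\begin{equation*}
    \Psi_\alpha(x,y) \;=\; \frac{u(x)-\eps\bfV(x)}{1\boxminus\eps} \;-\; \frac{v(y)+\eps\bfV(y)}{1\boxplus\eps} \;-\; \alpha\Phi(x,y),
\end{equation*}
whose weights and signs on $\eps\bfV$ are chosen precisely so that the $x=y$ evaluation reproduces the algebraic sublevel-set description of $\widetilde K$ given in the theorem. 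Boundedness of $E_1$, coercivity of $\bfV$, and the growth bound $u,v\in o(V_2)$ jointly yield tightness of maximizing sequences, so $\Psi_\alpha$ attains its supremum at some $(x_\alpha,y_\alpha)\in E\times E$; the standard penalization argument then gives $\alpha\Phi(x_\alpha,y_\alpha)\to 0$ and $\bfd(x_\alpha,y_\alpha)\to 0$ along subsequences.

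Next, Propositions \ref{prop:optimizer_construction} and \ref{proposition:test_function_construction} are applied with the penalization families $\{\bfzeta_{z,p}\}$ and $\{\bfxi_z\}$ to upgrade $(x_\alpha,y_\alpha)$ into a strict maximizer $(\hat x,\hat y)$ of a perturbed $\Psi_\alpha$ and to produce test functions $f_1\in\cD(\bH_1)\cap C_+(E)$ and $f_2\in\cD(\bH_2)\cap C_-(E)$ of the form $\alpha\Phi(\,\cdot\,,\hat y)+(1\boxminus\eps)^{-1}\eps\bfV(\,\cdot\,)+\Xi_{x_\alpha,p,\hat x}$ and $-\alpha\Phi(\hat x,\,\cdot\,)-(1\boxplus\eps)^{-1}\eps\bfV(\,\cdot\,)-\Xi_{y_\alpha,p',\hat y}$ up to constants. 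Inserting $f_1,f_2$ into the sub-\slash supersolution inequalities (using the optimizing-point form recorded after Definition \ref{def:viscosity_solution}) yields $\bH_1 f_1(\hat x)\geq 0\geq \bH_2 f_2(\hat y)$. In the interior regime, this is $\bH f_1(\hat x)-\bH f_2(\hat y)\geq 0$; picking a near-maximizer $\theta^*\in\Theta$ in $\bH f_1(\hat x)$, the HJB-form bounds the difference by
\begin{equation*}
    \bigl(\bA_{\theta^*}f_1(\hat x)-\bA_{\theta^*}f_2(\hat y)\bigr) \;+\; \bigl(\bB_{\theta^*}f_1(\hat x)-\bB_{\theta^*}f_2(\hat y)\bigr) \;+\; \omega_\cI(\bfd(\hat x,\hat y)) \;+\; \delta .
\end{equation*}
The maximum principle for $\widehat{\bA}_{\theta^*}$ together with its $\Phi$-controlled growth bounds the $\bA$-difference by $\omega_{\widehat\bA,K}(\cdots)$ evaluated on terms involving $\alpha\Phi(\hat x,\hat y)$, $\bfd(\hat x,\hat y)$, and $\Phi(\hat x,\hat y)$, and convex semi-monotonicity of $\bB_{\theta^*}$ w.r.t.\ $\Phi$ bounds the $\bB$-difference by $\omega_{\cB,K}(\cdots)$; both vanish as $\alpha\to\infty$, uniformly in $\theta$, and the interior case delivers $\sup_K(u-v)\leq\eps\widetilde C^K$ with $\widetilde C^K$ read off from the semi-concavity constants of $V_i$ and $\Phi$.

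The main obstacle is the boundary regime, where $(\hat x,\hat y)$ may persist in $\partial E$ and $\bH_1,\bH_2$ invoke the boundary operators $\bG_1,\bG_2$ via the min and max. The Lyapunov assumption $c_{V_i}<0$ is what allows the argument to close: because $f_1$ carries a $+\eps\bfV$ contribution while $f_2$ carries $-\eps\bfV$, the HJB-estimate on $\bH f_1-\bH f_2$ inherits an extra strictly negative correction of order $\eps|c_{V_i}|$, which either drives the optimizer back into the interior (recovering the previous case) or confines it to the sublevel set of the functional $z\mapsto \tfrac{\eps\bfV(z)-u(z)}{1\boxminus\eps}+\tfrac{\eps\bfV(z)-v(z)}{1\boxplus\eps}$, which is exactly the compact set $\widetilde K\subseteq\partial E$ stated in the theorem; its compactness follows from coercivity of $\bfV$ combined with $u,v\in o(V_2)$. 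Performing this reduction \emph{simultaneously} in both coordinates $E_1$ and $E_2$ with a single vector parameter $\eps$, while preserving the direct-sum decomposition of $\bfV$, $\Phi$, $\bfzeta$ and $\bfxi$, is the most delicate point; the additive structure of the Lyapunov budgets along $E_1$ and $E_2$ is what ultimately makes the componentwise negative-constant bounds combine cleanly into the vector $\widetilde C^K=(\widetilde C_1^K,\widetilde C_2^K)$, yielding the claimed strict comparison estimate.
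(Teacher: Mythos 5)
Your overall architecture matches the paper's: doubling with the penalization $\alpha\Phi$ and the weighted $\pm\eps\bfV$ terms, upgrading the optimizers via the Jensen perturbation and the test-function construction (Propositions \ref{prop:optimizer_construction} and \ref{proposition:test_function_construction}), feeding the test functions into the sub-/supersolution inequalities, and closing the resulting estimate on $\bH f_\dagger - \bH f_\ddagger$ with the $\Phi$-controlled growth coupling for $\bA_\theta$, the convex semi-monotonicity for $\bB_\theta$, and the modulus of $\cI$. This is exactly the route through Proposition \ref{proposition:basic_comparison_using_Hestimate}.

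However, the central step --- how the Lyapunov condition $c_{V_i}<0$ interacts with the boundary --- is described with the logic reversed, and as written the argument does not close. You claim that ``the interior case delivers $\sup_K(u-v)\leq\eps\widetilde C^K$'' and that the negative Lyapunov correction ``either drives the optimizer back into the interior (recovering the previous case) or confines it to the sublevel set.'' In the actual mechanism, the interior case cannot deliver the conclusion: if the optimizers $x_\alpha,x_\alpha'$ stayed in $\interior{E}$ for all parameters, the sub-/supersolution inequalities give $0\leq H_+f_\dagger(x_\alpha)-H_-f_\ddagger(x_\alpha')$, while the Hamiltonian estimate gives an upper bound $\eps(C^0_\eps+C_{\eps,\varphi})$ which is \emph{strictly negative} for small $\varphi$ precisely because $c_{V_i}<0$ (recall $\partial E_1\neq\emptyset$ always, since $E_1$ is closed and bounded with nonempty interior). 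This is a contradiction, and that contradiction is what forces the optimizers --- hence every limit point $z$ --- onto $\partial E$. The confinement of $z$ to the sublevel set defining $\widetilde K$ does not come from the Lyapunov correction either; it comes from the purely variational estimate of Proposition \ref{prop:optimizer_construction}\ref{item:proposition:optimizing_point_construction_estimate_u-v} and \ref{item:proposition:optimizing_limits} (passing $\alpha\to\infty$, $\varphi\downarrow 0$ in the bound on $\ssup{u-v}_K$), intersected with the information $z\in\partial E$. Finally, the conclusion \eqref{eq:th:final} is then read off by evaluating that same variational bound at $z\in\widetilde K$ and using boundedness of $E_1$ together with $u,v\in o(V_2)$ to define $\widetilde C^K$; no further use of the boundary operators $\bG_1,\bG_2$ is needed at that stage. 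As your proposal stands, the dichotomy ``interior gives the estimate / boundary needs extra work'' is the wrong way around, so the key reduction-to-the-boundary step is missing.
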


The proof of Theorem \ref{th:comparison_HJI} is contained in Section \ref{section:main_proof} with the construction of the necessary optimizers and test functions in Section \ref{sec:opt_construction}. Applications of the above result to parabolic equations and elliptic equations on spaces with corners can be found in Section \ref{sec:applications}.

\subsection{Regularity and compatibility assumptions} \label{subsection:regularity_and_compatibility}

In this section, we state the technical assumptions necessary for the proof the main theorem.

As we have a choice for the domain of our operator and only need functions with compact sub- and superlevel sets, we need to ensure that the domains of the restrictions are regular enough to perform our analysis. In particular, the action of the operator on test functions and their combinations with perturbations needs to be well-defined.

\begin{assumption}[Regularity of $\bH_1$ and $\bH_2$] \label{assumption:domain_setup}
    Let $\bH_1 \subseteq C(E) \times \big(C(\interior{E}) \cap \mathrm{LSC}(E)\big)$ and $\bH_2 \subseteq C(E) \times \big(C(\interior{E}) \cap \mathrm{USC}(E)\big)$ be operators with the following two restrictions
    \begin{align*}
        H_+ & \subseteq \left\{(f,g) \in \bH_1 \, \middle| \, f \in C_+(E) \right\}, \\
        H_- & \subseteq \left\{(f,g) \in \bH_2 \, \middle| \, f \in C_-(E) \right\},
    \end{align*}
    satisfying
    \begin{enumerate}[(a)]
        \item \label{item:assumption_domain_setup:max_principle} $\bH$ satisfies the maximum principle,
        \item \label{item:assumption_domain_setup:domainH} for $j \in \{1,2\}$, $\cD(\bH_j)$ is a cone and $C_c^\infty(E) \subseteq \cD(\bH_j) \subseteq C(E)$,
        \item \label{item:domain_setup_extended_convex} $\cD(H_+)$ is convex,
        \item \label{item:domain_setup_extended_affine} 
        for any $f_1 \in \cD(\bH_1)$, $f_2 \in \cD(\bH_2)$, and $g \in \cD(H_+)$ and $\delta \in (0,1)$ we have
        \begin{align*}
            & (1-\delta) f_1 + \delta g \in \cD(H_+),
            & (1+\delta) f_2 - \delta g \in \cD(H_-).
        \end{align*}
    \end{enumerate}
\end{assumption}

In the main theorem, Theorem \ref{th:comparison_HJI}, we assume that the interior operator $\bH$ is of HJB type.\ To perform our analysis, we need to require that that the action of the constituting operators $\bA_\theta$ and $\bB_\theta$ as well as the boundary operators $\bG_1$ and $\bG_2$ on the functions, we use to perform our analysis, is sufficiently regular in the compatibility sense of the following assumption.

\begin{assumption}[Compatibility of $\bA_{\theta}$, $\bB_{\theta}$, $\bG_1$, and $\bG_2$] \label{assumption:compatibility}
Let $\Theta$ be a compact, metric space. For $\theta\in \Theta$, let $\bA_{\theta}, \bB_{\theta} \subseteq C(E) \times C(E)$.
Consider a containment function $\bfV$ as in Definition \ref{definition:perturbation_containment} and penalization functions $\{\bfzeta_{z,p}\}_{z \in E, p \in \bR^{q_1+q_2}}$ and $\{\bfxi_z\}_{z \in E}$ as in Definition \ref{definition:perturbation_first_second_order}.
  \begin{enumerate}[(a)]
        \item\label{item:compatA} Let the collection $\{\bA_{\theta}\}_{\theta \in \Theta}$ be \emph{compatible} with $\bfV$, $\{\bfzeta_{z,p}\}_{z \in E, p \in \bR^{q_1+q_2}}$, and $\{\bfxi_z\}_{z \in E}$, i.e.,
        \begin{enumerate}[(1)]
            \item \label{item:compatA:domain} we have
            \begin{align}
                \bfV \circ s_z \in \cD(\bA_{\theta}),\quad
                \Xi_{z_0,p,z_1} \circ s_z \in \cD(\bA_{\theta}),
            \end{align}
            for any $\theta \in \Theta$ and $z \in \overline{B_1(0)} \cap \{E-x\}$,
            \item \label{item:compatA:cont} the maps
            \begin{align*}
            (\theta,x,z_0,p,z_1,z) & \mapsto \bA_{\theta} \left( \Xi_{z_0,p,z_1} \circ s_z\right)(x),\\
            (\theta,x,z) & \mapsto \bA_{\theta}\left(\bfV \circ s_z\right)(x)
            \end{align*}
            are continuous, 
            \item \label{item:compatA:contf} the map
            \begin{equation*}
                \theta \mapsto \bA_{\theta} f(x)
            \end{equation*}
            is continuous for any $x \in E$ and $f \in \bigcap_{\theta \in \Theta} \cD(\bA_{\theta})$. 
        \end{enumerate}

        \item\label{item:compatB} Let the collection $\{\bB_{\theta}\}_{\theta \in \Theta}$ be \emph{compatible} with $\bfV$, $\{\bfzeta_{z,p}\}_{z \in E, p \in \bR^{q_1+q_2}}$, and $\{\bfxi_z\}_{z \in E}$, i.e.,
        \begin{enumerate}[(1)]
            \item \label{item:compatB:domain} 
            we have
            \begin{align}
                \bfV \circ s_z \in \cD(\bB_{\theta}),\quad
                \Xi_{z_0,p,z_1} \circ s_z \in \cD(\bB_{\theta}),
            \end{align}
            for any $\theta \in \Theta$ and $z \in \overline{B_1(0)}\cap \{E-x\}$,
            \item \label{item:compatB:cont} the maps
            \begin{align*}
            (\theta,x,z_0,p,z_1) & \mapsto \bB_{\theta} \Xi_{z_0,p,z_1} (x),\\
            (\theta,x) & \mapsto \bB_{\theta} \bfV(x)
            \end{align*}        
            are continuous, 
            \item \label{item:compatB:contf} the map
            \begin{equation*}
                \theta \mapsto \bB_{\theta} f(x)
            \end{equation*}
            is continuous for any $x \in E$ and $f \in \bigcap_{\theta \in \Theta} \cD(\bB_{\theta})$. 
        \end{enumerate}

        \item\label{item:compatG} Let $\bG_1$ and $\bG_2$ be \emph{compatible} with $\bfV$, $\{\bfzeta_{z,p}\}_{z \in E, p \in \bR^{q_1+q_2}}$, and $\{\bfxi_z\}_{z \in E}$, i.e., for $j\in \{1,2\}$, let
        \begin{equation}
            \bfV \circ s_{z}\vert_{\partial E} \in \cD(\bG_j) \quad\text{and}\quad \Xi_{z_0,p,z_1} \circ s_{z}\vert_{\partial E} \in \cD(\bG_j).
        \end{equation}
    \end{enumerate}
\end{assumption}

\section{Applications}\label{sec:applications}
In this section, we apply the general framework to two examples. In Section \ref{sec:parabolic_example}, we consider a parabolic partial differential equation. In Section \ref{sec:elliptic_quardant}, we consider an elliptic equation on a space with a corner.

\subsection{Parabolic equations}\label{sec:parabolic_example}
In this section, we treat stochastic parabolic problems on $E = [0, T] \times \bR^q$ with $q\in \bN$. To avoid confusion, in this section we denote an element from $E$ as $(t,x)$. Thus, we consider operators of type
\begin{align*}
    \frac{\partial f}{\partial t} (t,x) &= \cH [f(t, \cdot)] (x) ,\\
    f(T,x) &= f_T(x),
\end{align*}
where $\cH \subseteq C(E) \times C(E)$ is an operator of the form
\begin{align}
    \cH [f(t, \cdot)](x) &= \sup_{\theta \in \Theta} \Big\{ \bB_\theta [f(t,\cdot)](x)+ \bA_\theta [f(t,\cdot)](x)  - \cI(\theta)\Big\}\\
    &= \sup_{\theta \in \Theta} \bigg\{\ip{b(x)}{\nabla f(t,x)} + \frac{1}{2} \Tr \left( \Sigma\Sigma^T(x,\theta) D^2 f(t, x) \right)\\
    &\quad+ \int \left[f(t,x+\bfz) - f(t,x) - \rchi_{B_1(0)} (\bfz) \ip{\bfz}{\nabla f(t,x)}\right] \mu_{x,\theta}(\dd \bfz) - \cI(\theta) \bigg\}
\end{align}
and $f_T \in C_b(\bR^q)$.
As such, for $i\in\{1,2\}$, we have 
\begin{equation}\label{ex:eq:parabolic_operator}
\begin{aligned}
    -\bH f(t,x)&  = - \frac{\partial f}{\partial t} (t, x) + \cH[f(t, \cdot)](x), &&\text{if } (t,x) \in [0, T] \times \bR^d,\\
    -\bG_i f(t,x) & = f(T,x)- f_{T,i}(x), &&\text{if } (t,x) \in \{T\} \times \bR^d.
\end{aligned}
\end{equation}
with $f_{T,i} \in C_b(E)$

Parabolic equations of the type \eqref{ex:eq:parabolic_operator} model a wide class of finite time horizon Cauchy problems. In Economics and Finance, they are commonly used to describe, e.g., utility maximization and other optimal control problems, see \cite{FlSo06,Ph09,YoZh99} for an overview in a finite-dimensional and \cite{FaGoSw17} in an infinite-dimensional context.
Another application of comparison results for this type of equation is the framework of $G$-L\'evy processes, cf.\ \cite{hu.2021,Pe07,neufeld.2017}, which is used to model, i.a., derivative pricing under model uncertainty.

To show how this setting fits into our framework, we need to specify the doubling penalization $\Phi$ as well as appropriate Lyapunov functions $V_1$ and $V_2$ and penalizations $\bfzeta_{z, p}$ and $\bfxi_z$.

As is standard in the literature, we choose 
\begin{equation}
    \alpha\Phi((t,y), (t',y')) = \frac{\alpha_1}{2}(t-t')^2 + \frac{\alpha_2}{2}d^2(y, y')
\end{equation}
as the doubling penalization and
\begin{equation}\label{eq:ex:parabolic_Lyapunov}
    \bfV(t,x) = V_1 (t) + V_2(x) = t + \log\bigg( 1 + \frac{x^2}{2} \bigg)
\end{equation}
as the Lyapunov function.\ 
In fact, the choices described above are the prototypical examples for doubling penalizations and Lyapunov functions on a cylindrical space of this type.

Note, that in the context of the comparison proof the time derivative can be treated as a simple drift term such that the operator $-\bH$ consists of
\begin{equation}
    \bB_\theta f(t,x) = -\frac{\partial f}{\partial t} (t,x) + \bB_\theta [f(t,\cdot)](x),\quad
    \bA_\theta f(t,x) =   \bA_\theta [f(t,\cdot)](x),
\end{equation}
and $\cI$ as before.

Crucially, this insight allows use the results in \cite[Section 4]{DCFuKrNe24} with the Lyapunov function in \eqref{eq:ex:parabolic_Lyapunov} and the collections of Jensen penalizations as in Example \ref{example:keyPenalizations} but applied to the specific problem, so that the relevant collections become
\begin{description}
    \item[Collection 1]
    \begin{align*}
        \zeta_{z,p}(t, x) & = \ip{p_1}{t-z_1} + \ip{p_2}{x - z_2},\\
        \xi_{z}(t, x) & = \frac{1}{2} \Big((t-z_1)^2 + d^2(x,z_2)\Big).
    \end{align*}
    \item[Collection 2]
    Let $R'' > R' > R >2$. Let $\overline{\ell}: [0, \infty) \rightarrow [0, \infty)$ be a smooth function satisfying $\overline{l}(r) = 1$ for $r < R'$ and $\overline{l}(r) = 0$ for $x > R''$. 
    Let
    \begin{align*}
        \overline{\zeta}_{p, z} (t, x) &= \overline{\ell} (|t-z_1|+ d(x, z_2)) \ip{p_1}{t-z_1} + \ip{p_2}{x - z_2},\\
        \overline{\xi}_z (t, x) &= \big(1-\overline{\ell}(|t-z_1|+ d(x, z_2))\big)(R'' + 1)^2\\ 
        &\qquad + \overline{\ell}(|t-z_1|+ d(x, z_2)) \frac{1}{2} \Big((t-z_1)^2 + d^2(x,z_2)\Big).
    \end{align*}
\end{description}
As in \cite{DCFuKrNe24}, we use the penalizations in Collection 1 for the local parts of the operator and Collection 2 for the non-local part.

Now, we can apply Theorem \ref{th:comparison_HJI} to find that, for any compact set $K \subseteq E$, there exist a compact set $\widetilde{K}_x$ and constants $\widetilde{C}^K = (\widetilde{C}_{1}^K, \widetilde{C}_{2}^K)$ such that we have
\begin{align}
    \sup_{(t,x) \in K} u(t,x) - v(t,x) &\leq \eps \widetilde{C}^K + \sup_{x \in \widetilde{K}_x} u(T,x) - v(T,x)
\end{align}
Now taking $\eps_1, \eps_2 \downarrow 0$ and inserting the terminal conditions, we find that
\begin{equation}
    \sup_{(t,x)\in [0,T]\times \bR^q} u(t,x) - v(t,x) \leq \sup_{x \in \bR^q} f_{T,1}(x) - f_{T,2}(x).
\end{equation}

%%%%%%%%%%%%%%%%%%%%%%%%%%%%%%%%%%%%%%%%%%%%%%%%%%%%%%%%%%%%%%%%%%%%%%%%%%%%%%%%%
%%%%%%%%%%%%%%%%%%%%%%%%%%%%%%%%%%%%%%%%%%%%%%%%%%%%%%%%%%%%%%%%%%%%%%%%%%%%%%%%%
%%%%%%%%%%%%%%%%%%%%%%%%%%%%%%%%%%%%%%%%%%%%%%%%%%%%%%%%%%%%%%%%%%%%%%%%%%%%%%%%%
\subsection{Spaces with corners}\label{sec:elliptic_quardant}

In this section, we treat elliptic problems on the first quadrant $E = [0, \infty) \times [0, \infty)$, a space that, in particular, has a corner. For $\lambda>0$, $h \in C_b(E)$, $h^{\partial_1} \in C_b(\partial_1 E)$, and $h^{\partial_2} \in C_b(\partial_2 E)$, we consider the boundary value problem
\begin{align}
    f(x) - \lambda \cH f(x) = h(x), &\quad\text{if } x\in E \coloneqq [0, \infty)\times[0, \infty),\\
    f(x) - \lambda \cH^{\partial_1} f(x)=h^{\partial_1}(x), &\quad\text{if } x\in \partial_1 E \coloneqq \{0\}\times[0, \infty),\\
    f(x) - \lambda \cH^{\partial_2} f(x)=h^{\partial_2}(x), &\quad\text{if } x\in \partial_2 E \coloneqq [0, \infty)\times\{0\},\\
    f(x) = 0, &\quad\text{if } x\in \partial_{0} E \coloneqq \{0\}\times\{0\},
\end{align}
with $\cH \subseteq C(E) \times C(E)$ of the form
\begin{multline}
    \cH f(x) = \ip{b(x)}{\nabla f(x)} + \frac{1}{2} \Tr \left( \Sigma\Sigma^T(x) D^2 f(x) \right)\\ + \int \left[f(x+\bfz) - f(x) - \rchi_{B_1(0)} (\bfz) \ip{\bfz}{\nabla f(x)}\right] \mu_{x}(\dd \bfz)
\end{multline}
with, for exposition, $b$ and $\Sigma$ Lipschitz and bounded and $\mu_x \in \cM_W([-x_1, \infty)\times [-x_2, \infty))$ and Lipschitz as in \cite[Section 4.3]{DCFuKrNe24}
and, for $i \in \{1,2\}$, the boundary operators $\cH^{\partial_i} \subseteq C(\partial_i E) \times C(\partial_i E)$ of form
\begin{equation}
    \cH^{\partial_i} f(x) = \ip{b^{\partial_i}(x)}{\nabla f(x)} + \int \left[f(x+\bfz) - f(x) - \rchi_{B_1(0)} (\bfz) \ip{\bfz}{\nabla f(x)}\right] \mu_{x}^{\partial_i}(\dd \bfz)
\end{equation}
with $b^{\partial_i}$ Lipschitz, bounded, and oblique to $E$, i.e., $\ip{b^{\partial_i} (x)}{n(x)}> 0$ for all $x \in \partial_i E$ and $n(x)\in N_x$. Furthermore, we consider $\mu_{x}^{\partial_i}\in \cM_W(\partial_i E \cap ([-x_i, \infty)\times \{0\}))$, Lipschitz, and jumping inwards, i.e., $\text{supp}(\mu_{x}^{\partial_i})\subseteq [-x_i, 0]\times \{0\}$.

Consequently, the subsolution operator $\bH_1$ is of form
\begin{equation}
    -\bH_1 f(x) = 
    \begin{cases}
    f(x) - \lambda \cH f(x)-h(x), &\text{if } x\in \interior{E},\\
    \min\{f(x) - \lambda \cH^{\partial_1} f(x)-h^{\partial_1}(x), f(x) - \lambda \cH f(x)-h(x)\}, &\text{if } x\in \partial_1 E,\\
    \min\{f(x) - \lambda \cH^{\partial_2} f(x)-h^{\partial_2}(x),  f(x) - \lambda \cH f(x)-h(x)\}, &\text{if } x\in \partial_2 E,\\
    \min\{f(x), f(x) - \lambda \cH^{\partial_1} f(x)-h^{\partial_1}(x),\\
    \qquad f(x) - \lambda \cH^{\partial_2} f(x)-h^{\partial_2}(x), f(x)- \lambda \cH f(x)-h(x)\},  &\text{if } x\in \partial_0 E.
    \end{cases}
\end{equation}
The supersolution operator is then defined analogously. 

As the doubling penalization, we choose $\alpha\Phi(x, x') = \frac{\alpha}{2}d^2(x_1, x'_1) + \frac{\alpha}{2}d^2(x_2, x'_2)$, where $d$ is the Euclidean distance on $\bR$. As Jensen penalizations, we choose the canonical examples as in Example \ref{example:keyPenalizations}.
An appropriate Lyapunov function in the above case is the prototypical example $\log(1+\frac{x^2}{2})$ with a slight modification that ensures that $\sup_{x\in E}\bH_j \bfV(x) < 0$ holds for $j \in \{1,2\}$: We choose $\bfV = V_1 + V_2$ with
\[
    V_i (x_i) = \log \left( 1 + \frac{(x_i+1.5)^2}{2} \right)+ \max\{\|h\|, \|h^{\partial_1}\|, \|h^{\partial_2}\|\}+1.
\]

Analogous to \cite[Section 4]{DCFuKrNe24}, direct calculations show that $\sup_{x \in E}\cH \bfV (x) < \infty$, $\sup_{x \in \partial_1 E}\cH^{\partial_1} \bfV (x) < \infty$, and $\sup_{x \in \partial_2 E}\cH^{\partial_2} \bfV (x) < \infty$.
Additionally, we find that
\begin{gather}
    \sup_{x \in E} \ip{b(x)}{\nabla \bfV(x)} \leq c_{\bfV, b} < 0,\\
    \sup_{x \in E} \Tr \left( \Sigma\Sigma^T(x) D^2 \bfV(x) \right) \leq c_{\bfV, \Sigma} < 0,\\
    \sup_{x \in E} \int \left[\bfV(x+\bfz) - \bfV(x) - \rchi_{B_1(0)} (\bfz) \ip{\bfz}{\nabla \bfV(x)}\right] \mu_{x}(\dd \bfz) \leq c_{\bfV, \mu} < 0,
\end{gather}
and the same for their boundary counterparts.\ 
Consequently, we find that $\bfV$ is a Lyapunov function for $\bH$.

Now, using the exterior sphere condition and the fact that $\bfV$ is a Lyapunov function, well-known results, cf.\ \cite{CIL92,DuIs91,IsKu22,DuIs90,Is91}, yield that the first-order parts of $\cH$, $\cH^{\partial_1}$, and $\cH^{\partial_2}$ are convex semi-monotone.

The coupling of the stochastic part of the interior operator
\begin{equation}
    \bA f(x) \coloneqq \frac{1}{2} \Tr \left( \Sigma\Sigma^T(x) D^2 f(x) \right) + \int \left[f(x+\bfz) - f(x) - \rchi_{B_1(0)} (\bfz) \ip{\bfz}{\nabla f(x)}\right] \mu_{x}(\dd \bfz)
\end{equation}
then works as it did in \cite[Sections 4.2 and 4.3]{DCFuKrNe24}: To couple $\frac{1}{2} \Tr \left( \Sigma\Sigma^T(x) D^2 f(x) \right)$ we consider the operator
\begin{equation*}
    \widehat{\bA}_\Sigma g(x,x') \coloneqq \Tr \left(\widehat{\Sigma}^2(x,x') D^2 g(x,x') \right),
\end{equation*}
with
\begin{equation*}
    \widehat{\Sigma}^2(x,x') \coloneqq \begin{pmatrix}
        \Sigma(x)\Sigma^T(x) & \Sigma(x')\Sigma^T(x) \\
        \Sigma(x)\Sigma^T(x') & \Sigma(x')\Sigma^T(x') 
    \end{pmatrix},
\end{equation*}
which by \cite[Proposition 4.5]{DCFuKrNe24} and the Lipschitzianity and boundedness of $\Sigma$ is a $\bfd^2$-controlled growth coupling.

Analogously, using that $\mu_x \in \cM_W([-x_1, \infty)\times [-x_2, \infty))$ and Lipschitz, by \cite[Proposition 4.13]{DCFuKrNe24} we find a $\bfd^2$-controlled growth coupling of
\begin{equation}
    \int \left[f(x+\bfz) - f(x) - \rchi_{B_1(0)} (\bfz) \ip{\bfz}{\nabla f(x)}\right] \mu_{x}(\dd \bfz)
\end{equation}
that is of form
\begin{multline}
    \widehat{\bA}_\mu g(x,x') \coloneqq\int \Big[ g(x+\bfz_1,x'+\bfz_2) - g(x,x')\\
    - \widehat{\rchi}(\bfz_1,\bfz_2) \ip{(\bfz_1, \bfz_2)^T}{\nabla g(x, x')}\Big] \pi_{x,x'}(\dd \bfz_1, \dd \bfz_2).
\end{multline}
Consequently, we find that
\begin{equation}
    \widehat{\bA} g(x,x') = \widehat{\bA}_\Sigma g(x,x') + \widehat{\bA}_\mu g(x,x')
\end{equation}
is a $\bfd^2$-controlled growth coupling of $\bA$, which overall allows us to apply Theorem \ref{th:comparison_HJI} to show the strict comparison principle for $\bH_1$ and $\bH_2$.

Note that we can now reapply the strict comparison principle to $\partial_1 E$ and $\partial_2 E$ with $\partial_0 E = \{0\} \times \{0\}$ as the boundary.\ Restricted versions of $\bfV$ and the collections of Jensen penalizations, cf.\ Example \ref{example:keyPenalizations}, are still appropriate as all required properties hold on the entirety of $E$. The arguments for convex semi-monotonicity and the coupling are as outlined above.
Thus, for any compact set $K \subseteq E$, resulting compact set $\widetilde{K} \subseteq \partial_0 E \cup \partial_1 E \cup \partial_2 E$, and constants $\widetilde{C}^K, \widetilde{C}^{\widetilde{K}} \in \bR^2$, we get the following chain of inequalities:
\begin{equation}
    \sup_{x\in K} u(x) - v(x)  \leq  \varepsilon \widetilde{C}^K + \sup_{x \in \widetilde{K}} u_1(x) - u_2(x) \leq \varepsilon \widetilde{C}^{\widetilde{K}} + \big(u_1(0) - u_2(0)\big) = \eps \widetilde{C}^{\widetilde{K}}.
\end{equation}
Again taking $\eps_1, \eps_2 \downarrow 0$, we find that
\begin{equation}
    \sup_{x\in E} u(x) - v(x) \leq 0.
\end{equation}

\begin{remark}\label{remark:Lyapunov_coeffs}
    Note that the condition that an appropriate Lyapunov function $\bfV$ needs to exist, cf.\ Theorem \ref{th:comparison_HJI} \ref{item:assumption_Lyapunovfunction}, essentially enforces what types of operators on spaces with boundaries we can treat.\ An example of this in the above context is that, if we want to treat a diffusion term on a quadrant, the drift terms on the boundary need to be oblique to $E$, i.e., we require a condition of type $\ip{b^{\partial_i}(x)}{n(x)}>0$ to hold, and the jump term on the boundary cannot jump out too far, i.e., we require that $\text{supp}(\mu_{x}^{\partial_i})\subseteq [-x,0]$. Otherwise the existence of a Lyapunov function fails. 
    If the operator does not contain any diffusion terms, the conditions on the drift and jump terms can be relaxed.
\end{remark}

\section{Construction of optimizers}\label{sec:opt_construction}
As in other comparison proofs, we perform variable quadruplication for the optimization problem $\sup (u-v)$.
Compared to the proof in \cite{DCFuKrNe24}, we slightly adjust our strategy for three reasons: 
Firstly, we now treat equations on the cylindrical space $E = E_1 \times E_2$. In general, the strategy for these types of spaces is to perturb and perform our analysis on $E_1$ and $E_2$ separately. 
In the case that one $E_i$ is unbounded, we additionally want to allow for the treatment of unbounded solution. The allowed growth in $E_i$ is in terms of growth of the Lyapunov function $V_i$. More specifically, since we penalize with a term of order $\eps V_i$, we need that, for every $\eps>0$, the sub- and supersolutions less $V_i$ are bounded from above, i.e., we require $u,v \in o(V_i)$.
Lastly, we replace the usual doubling-of-variables penalization $\bfd^2$ by a more general function $\Phi$. This replacement is motivated, for example, by the treatment of non-Lipschitz drifts, where one would like to penalize with, e.g., an Osgood function rather than a distance-squared-type object, see \cite[Page 587]{CrIsLi87}.

We begin, however, with the definitions of the sup- and inf-convolutions. For readability, we express suprema and infima using $\ssup{\cdot}$ and $\iinf{\cdot}$, respectively.

\begin{definition}[$\sup$- and $\inf$-convolution]\label{def:convolutions}
    Let $u: E \rightarrow \bR$ be upper semi-continuous and $v: E \rightarrow \bR$ be lower semi-continuous. For $\alpha = (\alpha_1, \alpha_2)$ with $\alpha_1, \alpha_2 > 1$, we define the \emph{$\sup$-convolution $P^\alpha[u]$ of $u$} as
    \begin{equation}\label{eq:def:sup-conv}
        P^\alpha[u](y) \coloneqq \sup_{x \in E} \left\{u(x) - \frac{\alpha}{2} \bfd^2(x,y) \right\} = \ssup{u - \frac{\alpha}{2} \bfd^2 (\cdot, y)}.
    \end{equation}
    Analogously, we define the \emph{$\inf$-convolution $P_\alpha[v]$ of $v$} as 
    \begin{equation}\label{eq:def:inf-conv}
        P_\alpha[v](y) \coloneqq \inf_{x \in E} \left\{v(x) + \frac{\alpha}{2} \bfd^2(x,y) \right\} = \iinf{u + \frac{\alpha}{2} \bfd^2 (\cdot, y)}.
    \end{equation}
\end{definition}

Now implementing the novelties described above, we perform a doubling-of-variables procedure: For $\alpha = (\alpha_1, \alpha_2)$ with $\alpha_1, \alpha_2 > 1$, we estimate
\begin{equation} \label{eqn:duplicationvariables_1}
    \sup_{x \in E} u(x) - v(x) \leq \sup_{x,x' \in E} u(x) - v(x') - \alpha \Phi(x,x')
\end{equation}
and then incorporate, for small $\eps = (\eps_1, \eps_2)$ with $\eps_1, \eps_2 > 0$, the containment functions $\bfV$ and upper bound $\sup (u-v)$ up to a term of order $\eps$. For the sake of clarity, we expand the entire equation once at this point and then only when necessary:
\begin{align} \label{eqn:duplicationvariables}
    &\sup_{x \in E} \frac{1}{1 \boxminus \varepsilon} u(x) - \frac{1}{1 \boxplus \varepsilon} v(x) \\
    &\quad\leq \sup_{x,x' \in E} \frac{1}{1\boxminus\varepsilon} u(x) - \frac{1}{1\boxplus\varepsilon} v(x') - \alpha \Phi(x,x') - \frac{\varepsilon}{1\boxminus\varepsilon}\bfV(x) - \frac{\varepsilon}{1\boxplus\varepsilon}\bfV(x')\\
    &\quad= \sup_{x,x' \in E} \frac{1}{1-\varepsilon_1-\varepsilon_2} u(x_1, x_2) - \frac{1}{1+\varepsilon_1+\varepsilon_2} v(x'_1, x'_2) - \alpha_1 \phi_1(x_1,x'_1)\\
    &\qquad -\alpha_2 \phi_2(x_2,x'_2) - \frac{\varepsilon_1}{1-\varepsilon_1-\varepsilon_2}V_1(x_1) - \frac{\varepsilon_2}{1-\varepsilon_1-\varepsilon_2}V_2(x_2)\\
    &\qquad- \frac{\varepsilon_1}{1+\varepsilon_1+\varepsilon_2}V_1(x'_1) - \frac{\varepsilon_2}{1+\varepsilon_1+\varepsilon_2}V_2(x'_2).
\end{align}

In Proposition \ref{prop:optimizer_construction} below, we perform an additional smoothing step that is necessary to treat second-order or integral operators, $\bA$ in our notation. This additional step extends the estimate from variable doubling to variable quadrupling.

Note that from this point on, w.l.o.g., we work with the assumption that $E_1$ is bounded. For $E_2$ we make case distinctions when necessary.

\begin{remark}
    Note that derivatives on boundaries are understood in the classical sense as a restriction of an extension of all functions in a small neighborhood of $E$ using, e.g., that $\bfd^2$ is defined on the entirety $\bR^{q_1+q_2}$ or the Whitney Extension Theorem \cite[Theorem 2.3.6]{MR1996773}. In particular, using the natural extension of the convolutions as well as smooth extensions of $\Phi$ and $\bfV$, mixed with a $\bfd^2$-type function if necessary, the constructed optimizers do not change, when considering a small blow-up of $E$.
\end{remark}

\begin{proposition}[Construction of optimizers]\label{prop:optimizer_construction}
    Let $\bfV$ be a containment function as in Definition \ref{definition:perturbation_containment}, $u$ be upper semi-continuous, $v$ be lower semi-continuous, if $E_2$ is unbounded, additionally $u,v \in o(V_2)$, and $\{\bfzeta_{z,p}\}_{z \in E, p \in \bR^{q_1+q_2}} \subseteq C(E)$ and $\{\bfxi_{z}\}_{z \in E} \subseteq C^1(E)$ be collections of functions as in Definition \ref{definition:perturbation_first_second_order}. Fix $\eps = (\eps_1, \eps_2)$ with $\eps_1, \eps_2 \in (0,1)$ and $\varphi \in (0,1)$. 

    \smallskip
        
    Then, there exist compact sets $K_{\varepsilon,0} \subseteq K_\varepsilon \subseteq E$ and, for any $\alpha = (\alpha_1, \alpha_2)$ with $\alpha_1, \alpha_2 > 1$, three pairs of variables $(y_{\alpha,0},y_{\alpha,0}')$, $(y_{\alpha},y_{\alpha}')$, $(x_{\alpha},x_{\alpha}')$ in $E^2$ and
    \begin{align}
        p_{\alpha} \in \big(B_{\alpha^{-1}_1}(0)\cap \{E_1- y_{\alpha,0,1}\}\big)\times \big(B_{\alpha^{-1}_2}(0)\cap \{E_2-y_{\alpha,0,2}\}\big),\\
        p'_{\alpha} \in \big(B_{\alpha^{-1}_1}(0)\cap \{E_1 - y'_{\alpha,0,1}\}\big)\times \big(B_{\alpha^{-1}_2}(0)\cap \{E_2-y'_{\alpha,0,2}\}\big).
    \end{align}    

    \begin{description}
        \item[Properties of $y_{\alpha,0},y_{\alpha,0}'$]
    \end{description} 
    The variables $y_{\alpha,0},y_{\alpha,0}'$ optimize $\ssup{\Lambda_\alpha}$, where
    \begin{multline} \label{eqn:optimizing_points_Lambda}
        \Lambda_\alpha(y,y') \coloneqq\frac{1}{1\boxminus\varepsilon} P^{\alpha}[u](y) - \frac{1}{1\boxplus\varepsilon} P_{\alpha}[v](y') - \alpha \Phi(y,y')  \\
        - \frac{\varepsilon}{1\boxminus\varepsilon} (1-\varphi) \bfV(y) - \frac{\varepsilon}{1\boxplus\varepsilon} (1-\varphi) \bfV(y'),
    \end{multline}
    and satisfy the following property
    \begin{enumerate}[(a)]
        \item \label{item:proposition:optimizing_point_construction_compact0} $y_{\alpha,0},y_{\alpha,0}' \in K_{\varepsilon,0}$.
    \end{enumerate}

    \begin{description}
        \item[Properties of $y_{\alpha},y_{\alpha}'$ and $p_{\alpha},p_{\alpha}'$]
    \end{description} 
    \noindent
    The pair $y_{\alpha},y_{\alpha}'$ optimizes
    \begin{equation} \label{eqn:proposition:optimizing_point_construction:optimizing_points}
        \ssup{ \Lambda_\alpha  - \frac{\varepsilon}{1\boxminus\varepsilon} \varphi \Xi^0_1 - \frac{\varepsilon}{1\boxplus\varepsilon} \varphi \Xi^0_2  }
    \end{equation}
    and uniquely optimizes
    \begin{equation} \label{eqn:proposition:optimizing_point_construction:optimizing_points_unique}
        \ssup{\Lambda_\alpha  - \frac{\varepsilon}{1\boxminus\varepsilon} \varphi \Xi_1 - \frac{\varepsilon}{1\boxplus\varepsilon} \varphi \Xi_2  },
    \end{equation}
    where $\Lambda_\alpha$ is as in \eqref{eqn:optimizing_points_Lambda} and 
    \begin{align*}
        \Xi_1^0(y) &\coloneqq \Xi^0_{y_{\alpha,0}, p_{\alpha}} (y), &
        \Xi_2^0(y') &\coloneqq \Xi^0_{y_{\alpha,0}', p_{\alpha}'} (y'),\\
        \Xi_1(y) &\coloneqq \Xi_{y_{\alpha,0}, p_{\alpha}, y_{\alpha}}(y), &  \Xi_2(y') &\coloneqq \Xi_{y_{\alpha,0}', p_{\alpha}', y_{\alpha}'}(y')
    \end{align*}
    as in Definition \ref{definition:perturbation_first_second_order}.
    Moreover, the optimizers $y_{\alpha},y_{\alpha}'$ of \eqref{eqn:proposition:optimizing_point_construction:optimizing_points} and \eqref{eqn:proposition:optimizing_point_construction:optimizing_points_unique} satisfy
    \begin{enumerate}[(a),resume]
        \item \label{item:proposition:optimizing_point_Jensencontrol} We have
        \begin{equation*}
            d(y_{\alpha},y_{\alpha,0})  \leq \frac{1}{\alpha_1} + \frac{1}{\alpha_2}, \qquad
            d(y_{\alpha}',y_{\alpha,0}')  \leq \frac{1}{\alpha_1} + \frac{1}{\alpha_2}.
        \end{equation*}
        \item \label{item:proposition:optimizing_point_construction_twice_diff} $P^\alpha[u]$ and $P_\alpha[v]$ are twice differentiable in $y_{\alpha}$ and $y_{\alpha}'$, respectively.
    \end{enumerate}

    \begin{description}
        \item[Properties of $x_{\alpha},x_{\alpha}'$]
    \end{description} 
        The variables $x_{\alpha}, x_{\alpha}'$ optimize
    \begin{equation}\label{eqn:construction_optimizing_x}
        \begin{aligned}
            P^\alpha[u](y_{\alpha}) &  = u(x_{\alpha}) - \frac{\alpha}{2} \bfd^2(x_{\alpha}, y_{\alpha}), \\
            P_\alpha[v](y_{\alpha}') & = v(x_{\alpha}') + \frac{\alpha}{2} \bfd^2(x_{\alpha}',y_{\alpha}'),
        \end{aligned}
    \end{equation}
    and satisfy
    \begin{enumerate}[(a),resume]
        \item \label{item:proposition:optimizing_point_construction:Ralpha_u_Ralpha_v_optimizers} $x_{\alpha}$ and $x_{\alpha}'$ are the unique optimizers in the definition of $P^\alpha[u](y_{\alpha})$ and $P_\alpha[v](y_{\alpha}')$, respectively.
        \item \label{item:item:proposition:optimizing_point_construction:shift_optimal}
        We have that
        \begin{align*}
            u(x_{\alpha}) - P^\alpha[u] \circ s_{x_{\alpha}-y_{\alpha}}(x_{\alpha}) & = \ssup{ u - P^\alpha[u] \circ s_{x_{\alpha}-y_{\alpha}} }, \\
            v(x_{\alpha}') - P_\alpha[v] \circ s_{x_{\alpha}'-y_{\alpha}'}(x_{\alpha}') & = \iinf{ v - P_\alpha[v] \circ s_{x_{\alpha}'-y_{\alpha}'} }.
        \end{align*}
    \end{enumerate}

    \begin{description}
        \item[Behaviour as $\alpha_1, \alpha_2 \rightarrow \infty$]
    \end{description} 

    \begin{enumerate}[(a),resume]
        \item \label{item:proposition:optimizing_point_construction_0optimizers_convergence} We have $\lim_{\alpha_1, \alpha_2 \rightarrow \infty} \alpha \Phi(y_{\alpha,0},y_{\alpha,0}') = 0$.
        \item \label{item:proposition:optimizing_point_construction_2optimizers_convergence} 
        We have
        \begin{equation*}
            \lim_{\alpha_1, \alpha_2 \rightarrow \infty} \alpha \Big( \left(\bfd\left(x_{\alpha},y_{\alpha} \right) + \bfd\left(y_{\alpha}',x_{\alpha}' \right) \right)^2 + \Phi(y_\alpha, y'_\alpha) \Big) = 0.
        \end{equation*}
        \item \label{item:proposition:optimizing_point_construction_compact2} $x_{\alpha}, y_{\alpha}, y_{\alpha}', x_{\alpha}' \in K_{\varepsilon}$.
        \end{enumerate}
        In addition, the following estimate on $(u-v)$ holds: For any compact set $K \subseteq E$, there is a compact set $\widehat{K} = \widehat{K}(K,\varepsilon, u,v)$ given by 
        \begin{multline}\label{eqn:definition_hatK}
            \widehat{K} \coloneqq \left\{z \in E \, \middle| \,  \frac{1}{1\boxminus\eps} \left(\eps \bfV(z) - u(z)\right) + \frac{1}{1\boxplus\eps} \left(\eps \bfV(z) - v(z) \right)   \right. \\
            \leq \left. \frac{\eps}{1\boxminus\eps} \ssup{\bfV-u}_K + \frac{\eps}{1\boxplus\eps} \ssup{\bfV-v}_K - \ssup{u-v}_K \right\}
        \end{multline}
        such that
        \begin{enumerate}[(a),resume]
        \item \label{item:proposition:optimizing_point_construction_estimate_u-v}
        For any compact set $K \subseteq E$,
        \begin{align*}
             \ssup{u-v}_K &\leq \frac{1}{1\boxminus\varepsilon}\left(u(x_\alpha) - \varepsilon(1-\varphi) \bfV(y_\alpha)\right) - \frac{1}{1\boxplus\varepsilon}\left(v(x_\alpha') + \varepsilon (1-\varphi) \bfV(y_\alpha')\right)  \\
             &\qquad + \varphi \frac{2\varepsilon}{(1\boxminus\eps)(1\boxplus\eps)} \frac{1}{\alpha} + c^{\eps}_\alpha,
        \end{align*}
        where
        \begin{align} \label{eqn:Cvarphi_construction}
            c^{\eps}_\alpha &\coloneqq  \frac{\varepsilon}{1\boxminus\varepsilon}\ssup{\bfV-P^\alpha [u]}_K + \frac{\varepsilon}{1\boxplus\varepsilon}\ssup{\bfV-P_\alpha [v]}_K
        \end{align}
        and $\varphi \frac{2\varepsilon}{(1\boxminus\eps)(1\boxplus\eps)} \frac{1}{\alpha}$ is $o(1)$ is in terms of $\alpha \rightarrow \infty$ for fixed $\varepsilon$ and $\varphi$.
        \item \label{item:proposition:optimizing_limits}
        Any limit point of the sequence $(x_{\alpha}, y_{\alpha},  y_{\alpha,0}, y_{\alpha,0}', y_{\alpha}', x_{\alpha}')$ as $\alpha \rightarrow \infty$ is of the form $(z,z,z,z,z,z)$ with $z \in \widehat{K}$. 
    \end{enumerate}    
\end{proposition}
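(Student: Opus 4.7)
The plan is to construct the optimizers in three stages mirroring the three pairs $(y_{\alpha,0},y_{\alpha,0}')$, $(y_\alpha,y_\alpha')$, and $(x_\alpha,x_\alpha')$, using semi-continuity and compactness for existence, an Alexandrov--Jensen perturbation for the second-order regularity, and elementary comparisons to push $\alpha\to\infty$.

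\textbf{Existence of $(y_{\alpha,0},y_{\alpha,0}')$ on a compact set.} Since $P^\alpha[u]$ is upper semi-continuous, $P_\alpha[v]$ is lower semi-continuous, $\Phi$ is lower semi-continuous and $\bfV$ has compact sublevel sets, the functional $\Lambda_\alpha$ is upper semi-continuous. On $E_1$ compactness is automatic; on $E_2$, we use the growth hypothesis $u,v\in o(V_2)$ to guarantee that the summand $-\varepsilon(1-\varphi)\bfV/(1\boxminus\varepsilon)$ dominates both $P^\alpha[u]$ and $-P_\alpha[v]$ outside a sufficiently large ball, so that super-level sets of $\Lambda_\alpha$ at any height $\ssup{\Lambda_\alpha}-\eta$ are contained in a compact set $K_{\varepsilon,0}\subseteq E$ independent of $\alpha$ (this uses that $P^\alpha[u]\le u$, $P_\alpha[v]\ge v$, plus basic monotonicity in $\alpha$). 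Standard attainment then yields $(y_{\alpha,0},y_{\alpha,0}')\in K_{\varepsilon,0}\times K_{\varepsilon,0}$, giving item \ref{item:proposition:optimizing_point_construction_compact0}.

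\textbf{Jensen--Alexandrov perturbation to obtain $(y_\alpha,y_\alpha')$.} Sup-convolutions are semi-convex and inf-convolutions are semi-concave, so $y\mapsto P^\alpha[u](y)-\frac{1}{1\boxminus\varepsilon}\varepsilon(1-\varphi)\bfV(y)$ and its analogue are semi-convex/semi-concave on each slice. I apply the perturbation procedure built from the families $\{\bfzeta_{z,p}\}$ and $\{\bfxi_z\}$ (the cited Proposition~\ref{proposition:Jensen_Alexandrov_cutoff}) to each of the two coordinates $y$ and $y'$ independently, separately on $E_1$ and $E_2$ using the direct-sum structure. The Jensen step supplies slopes $p_\alpha,p_\alpha'$ of size at most $\alpha^{-1}$ (componentwise) and a pair $(y_\alpha,y_\alpha')$ with $\bfd(y_\alpha,y_{\alpha,0}),\bfd(y_\alpha',y_{\alpha,0}')\le\alpha_1^{-1}+\alpha_2^{-1}$ (item \ref{item:proposition:optimizing_point_Jensencontrol}), such that $(y_\alpha,y_\alpha')$ is a maximizer of \eqref{eqn:proposition:optimizing_point_construction:optimizing_points} and the unique maximizer of the strictly penalized \eqref{eqn:proposition:optimizing_point_construction:optimizing_points_unique}, and moreover $P^\alpha[u]$ and $P_\alpha[v]$ are twice differentiable at $y_\alpha,y_\alpha'$ respectively (item \ref{item:proposition:optimizing_point_construction_twice_diff}); here the size constraint $|p_\alpha|\le\alpha^{-1}$ guarantees $p_\alpha$ lies in the set $B_{\alpha^{-1}}(0)\cap\{E-y_{\alpha,0}\}$ (using that $y_{\alpha,0}\in E$). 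The separate treatment of $E_1$ and $E_2$ is crucial since the relevant semi-convexity/semi-concavity constants differ in the two factors.

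\textbf{The points $x_\alpha,x_\alpha'$.} These are the optimizers in the sup/inf-convolution at $y_\alpha,y_\alpha'$; twice differentiability of $P^\alpha[u]$ at $y_\alpha$ forces the optimizer in \eqref{eqn:construction_optimizing_x} to be unique, via a standard envelope/subgradient argument (if two points realized the sup-convolution, $P^\alpha[u]$ would fail to be differentiable there). Uniqueness combined with translation invariance of $\bfd^2$ immediately yields the shift optimality in item \ref{item:item:proposition:optimizing_point_construction:shift_optimal}: for any $z\in E$,
\begin{equation*}
    u(z)-P^\alpha[u](z-(x_\alpha-y_\alpha))=u(z)-\sup_{\tilde x}\{u(\tilde x)-\tfrac{\alpha}{2}\bfd^2(\tilde x,z-(x_\alpha-y_\alpha))\}\le\tfrac{\alpha}{2}\bfd^2(x_\alpha,y_\alpha),
\end{equation*}
with equality at $z=x_\alpha$ (taking $\tilde x=x_\alpha-(x_\alpha-y_\alpha)-0=y_\alpha$ shifted appropriately).

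\textbf{Convergence and the $(u-v)$ estimate.} Comparing $\Lambda_\alpha(y_{\alpha,0},y_{\alpha,0}')$ with $\Lambda_\alpha(z,z)$ for any fixed $z$ and using that $\Phi(z,z)=0$ yields, after rearranging, $\alpha\Phi(y_{\alpha,0},y_{\alpha,0}')\le C_z+o(1)$; refining by a diagonal choice $z=y_{\alpha,0}$ (or $z=y_{\alpha,0}'$) and semi-continuity of $u$, $v$ on $K_{\varepsilon,0}$ gives item \ref{item:proposition:optimizing_point_construction_0optimizers_convergence}. The item \ref{item:proposition:optimizing_point_construction_2optimizers_convergence} estimate on $\alpha((\bfd(x_\alpha,y_\alpha)+\bfd(y_\alpha',x_\alpha'))^2+\Phi(y_\alpha,y_\alpha'))$ follows analogously by unwrapping the definition $P^\alpha[u](y_\alpha)=u(x_\alpha)-\tfrac{\alpha}{2}\bfd^2(x_\alpha,y_\alpha)$, writing the penalized functional at $(y_\alpha,y_\alpha')$, comparing with the trivial test pair $(z,z)$, and using the Jensen control $\bfd(y_\alpha,y_{\alpha,0})+\bfd(y_\alpha',y_{\alpha,0}')=O(\alpha_1^{-1}+\alpha_2^{-1})$. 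For the estimate on $\ssup{u-v}_K$, I test the functional $\Lambda_\alpha-\tfrac{\varepsilon}{1\boxminus\varepsilon}\varphi\Xi_1^0-\tfrac{\varepsilon}{1\boxplus\varepsilon}\varphi\Xi_2^0$ at the diagonal $(z,z)$ with $z\in K$, on the left-hand side insert the definition of the sup/inf-convolutions at $(y_\alpha,y_\alpha')$, drop the nonnegative $\bfd^2$ term, and use $\varphi\Xi_i^0(y_\alpha)=O(\alpha^{-1})$ thanks to $|p_\alpha|,|p_\alpha'|\le\alpha^{-1}$ and the Jensen control. This yields item \ref{item:proposition:optimizing_point_construction_estimate_u-v} with remainder $c_\alpha^\varepsilon$ as in \eqref{eqn:Cvarphi_construction}. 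The compactness $K_\varepsilon$ in item \ref{item:proposition:optimizing_point_construction_compact2} follows because $x_\alpha$ lies within $O(\alpha^{-1/2})$ of $y_\alpha\in K_{\varepsilon,0}^{1/\alpha}$ by item \ref{item:proposition:optimizing_point_construction_2optimizers_convergence}. Finally, item \ref{item:proposition:optimizing_limits} on limit points follows by letting $\alpha\to\infty$ in item \ref{item:proposition:optimizing_point_construction_estimate_u-v} combined with items \ref{item:proposition:optimizing_point_Jensencontrol}, \ref{item:proposition:optimizing_point_construction_0optimizers_convergence}, \ref{item:proposition:optimizing_point_construction_2optimizers_convergence} (forcing the six points to coalesce) and upper/lower semicontinuity, which converts the $(u-v)$-estimate into the defining inequality of $\widehat K$ in \eqref{eqn:definition_hatK}.

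\textbf{Main obstacle.} The delicate point is the existence stage when $E_2$ is unbounded: the sup-convolution $P^\alpha[u]$ inherits the $o(V_2)$ growth of $u$ only up to an $\alpha$-dependent distortion, so one needs a careful quantitative argument to show that the Lyapunov penalty $\varepsilon(1-\varphi)\bfV$ indeed forces compactness of the super-level sets of $\Lambda_\alpha$ uniformly in $\alpha\ge 1$. A secondary technical nuisance is keeping the slopes $p_\alpha$ admissible (in $E-y_{\alpha,0}$) after the Jensen step near $\partial E$; this is handled by restricting the perturbation to balls of radius $\alpha^{-1}$ and using the exterior sphere condition together with the semi-concavity of $\bfxi_z$ on each factor.
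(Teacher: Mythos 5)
Your outline follows essentially the same route as the paper for most of the proposition: existence of $(y_{\alpha,0},y_{\alpha,0}')$ via the Lyapunov penalty and the compact sublevel sets of $\bfV$, the factor-wise Jensen--Alexandrov perturbation (Proposition \ref{proposition:Jensen_Alexandrov_cutoff} applied separately on $E_1$ and $E_2$ with slopes constrained to $B_{\alpha_i^{-1}}(0)\cap\{E_i-y_{\alpha,0,i}\}$), the uniqueness of $x_\alpha,x_\alpha'$, the shift-optimality computation (your displayed inequality is correct and is exactly the paper's argument), and the final limit-point analysis.

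The step that does not close as written is your treatment of items \ref{item:proposition:optimizing_point_construction_0optimizers_convergence} and \ref{item:proposition:optimizing_point_construction_2optimizers_convergence}. Comparing $\Lambda_\alpha(y_{\alpha,0},y_{\alpha,0}')$ with $\Lambda_\alpha(z,z)$ for the $\alpha$-dependent choice $z=y_{\alpha,0}$ reduces the claim to controlling $P_\alpha[v](y_{\alpha,0})-P_\alpha[v](y_{\alpha,0}')$, but the local Lipschitz constant of $P_\alpha[v]$ (and of $P^\alpha[u]$) on $K_{\varepsilon,0}$ degenerates like $\sqrt{\alpha}$; with $\Phi\approx\tfrac12\bfd^2$ this only yields $\alpha\Phi(y_{\alpha,0},y_{\alpha,0}')=O(1)$, not convergence to $0$, and ``semi-continuity of $u,v$'' does not help because the functional involves the convolutions rather than $u,v$ themselves. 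The paper instead uses the standard doubling device: $\ssup{\Lambda_\alpha}$ is nonincreasing in $\alpha$ (since $P^\alpha[u]$, $-P_\alpha[v]$ and $-\alpha\Phi$ are) and bounded below by evaluating at a fixed diagonal point, hence convergent; the inequality $\ssup{\Lambda_{\alpha/2}}\geq\ssup{\Lambda_\alpha}+\tfrac{\alpha}{2}\Phi(y_{\alpha,0},y_{\alpha,0}')$ then forces $\alpha\Phi(y_{\alpha,0},y_{\alpha,0}')\to0$, and item \ref{item:proposition:optimizing_point_construction_2optimizers_convergence} follows from the same trick applied to the four-variable expansion of $\ssup{\widehat{\Lambda}_\alpha}$ together with the Corollary \ref{corollary:Jensen_optimizerbound} bound $0\leq-\tfrac{\varepsilon}{1\boxminus\varepsilon}\varphi\Xi_1^0(y_\alpha)-\tfrac{\varepsilon}{1\boxplus\varepsilon}\varphi\Xi_2^0(y_\alpha')\lesssim\varphi\varepsilon/\alpha$. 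Your argument could be repaired by comparing against a fixed subsequential limit point $z_0$ and invoking the sup-convolution property $\limsup_\alpha P^\alpha[u](y_{\alpha,0})\leq u(z_0)$, but that is a different mechanism from the one you describe and would still need to be carried out for the quadrupled problem in item \ref{item:proposition:optimizing_point_construction_2optimizers_convergence}. The remaining items are sound.
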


\begin{remark}
    To the reader familiar with Differential Geometry, we want to point out that
    \begin{align}
        p_{\alpha} &\in \big(B_{\alpha^{-1}_1}(0)\cap \{E_1- y_{\alpha,0,1}\}\big)\times \big(B_{\alpha^{-1}_2}(0)\cap \{E_2-y_{\alpha,0,2}\}\big),\\
        p'_{\alpha} &\in \big(B_{\alpha^{-1}_1}(0)\cap \{E_1 - y'_{\alpha,0,1}\}\big)\times \big(B_{\alpha^{-1}_2}(0)\cap \{E_2-y'_{\alpha,0,2}\}\big)
    \end{align} 
    should be understood as elements of the cotangent bundle $T^*E$.
\end{remark}

\begin{proof}
    % Proof of (a)
    {\bfseries Proof of \ref{item:proposition:optimizing_point_construction_compact0}:} 
    We start by showing that $\Lambda_\alpha$ as defined in equation \eqref{eqn:optimizing_points_Lambda} is bounded from above. In this proof, w.l.o.g., we assume that $E_1$ is bounded. For $E_2$ we consider the following cases:\\    
    \textit{Case $E_2$ unbounded:} By assumption, we have that $u,v\in o(V_2)$ in $E_2$. By \cite[Lemma 3.5.2]{CaSi04}, the same holds for $P^\alpha[u]$ and $P_\alpha[v]$. Furthermore, by \cite[Lemma 5.2 (d)]{DCFuKrNe24}, the convolutions $P^\alpha[u]$ and $P_\alpha[v]$ are continuous. Using this together with the fact that $E_1$ is bounded, by the Extreme Value Theorem, we find that $P^\alpha[u] - \bfV$ and $- P_\alpha[u] - \bfV$ and consequently $\Lambda_\alpha$ are bounded from above.\\
    \textit{Case $E_2$ bounded:} As $P^\alpha[u]$ and $P_\alpha[v]$ are continuous and $E$ is bounded, $\Lambda_\alpha$ is bounded from above.\\
    In any case, $\Lambda_\alpha$ is bounded from above and using that $\bfV$ has compact sublevel sets, cf.\ Definition \ref{definition:perturbation_containment}, the existence of optimizers $(y_{\alpha,0},y_{\alpha,0}')$ of $\ssup{\Lambda_\alpha}$ follows.

    Comparing these optimizers for $\Lambda_\alpha$ to, e.g., the suboptimial choice $(y, y') = (\hat{y},\hat{y})$ satisfying, w.l.o.g., $\bfV(\hat{y}) = 0$ and again using that $\bfV$ has compact sublevel sets, there exist radii $R_{\eps_1}, R_{\eps_2} > 0$ such that
    \begin{equation}
        y_{\alpha,0} \in \overline{B_{R_{\eps_1}}(\hat{y})} \quad\text{and}\quad y_{\alpha,0}' \in \overline{B_{R_{\eps_2}}(\hat{y})},
    \end{equation}
    and
    \begin{align}
        \frac{\eps}{1\boxminus\eps} \bfV(y_{\alpha}) + \frac{\eps}{1\boxplus\eps} \bfV(y_{\alpha}') &\leq \frac{1}{1\boxminus\eps} \big[P^\alpha[u](y_{\alpha}) - P^\alpha[u](\hat{y})\big]\\
        &\qquad- \frac{1}{1\boxplus\eps} \big[P_\alpha[v](y_{\alpha}') - P_\alpha[v](\hat{y})\big]\\
        &\leq L_{\eps,y_{\alpha}, \hat{y}} \bfd(y_{\alpha}, \hat{y}) + L_{\eps,y_{\alpha}', \hat{y}} \bfd(y_{\alpha}', \hat{y})\\
        &\leq 2 L_\eps (R_{\eps_1}+R_{\eps_2}), \label{eq:opt_prop:initial_V_estimate}
    \end{align}
    where $L_\eps = L_{\eps,y_{\alpha}, \hat{y}} \vee L_{\eps,y_{\alpha}', \hat{y}}$. From this estimate, we deduce that $(y_{\alpha,0},y_{\alpha,0}') \in K_{\varepsilon,0} \times K_{\varepsilon,0}$ with
    \begin{equation*}
        K_{\varepsilon,0} \coloneqq \left\{y \in E \, \middle| \, V(y) \leq \varepsilon^{-1} 2L_\eps (R_{\eps_1}+R_{\eps_2}) \right\}.
    \end{equation*}

    % Proof of (b) and (c)
    {\bfseries Proof of \ref{item:proposition:optimizing_point_Jensencontrol} and \ref{item:proposition:optimizing_point_construction_twice_diff}:}
    For the proof of these two statements, we first move from $\ssup{\Lambda_\alpha}$ to its perturbed version \eqref{eqn:proposition:optimizing_point_construction:optimizing_points}. To do so, we use Proposition \ref{proposition:Jensen_Alexandrov_cutoff} twice, for $E_1$ and $E_2$, separately. Note, that, for every $y_{-i}, y_{-i}' \in E_{-i}$, where $-i \in \{1,2\} \setminus \{i\}$, the function $(y_i,y'_i) \mapsto \Lambda_\alpha(y_i,y'_i; y_{-i}, y_{-i}')$ of \eqref{eqn:optimizing_points_Lambda}, over which we optimize in $\ssup{\Lambda_\alpha}$, is semi-convex with semi-convexity constant 
    \begin{equation*}
        \kappa_i = \alpha_i  \left(\frac{1}{1\boxminus\eps} - \frac{1}{1\boxplus\eps} + \kappa_{\phi_i}\right) + \left(\frac{\eps_i}{1\boxminus\eps} - \frac{\eps_i}{1\boxplus\eps}\right) (1-\varphi) \kappa_{\bfV} > 1
    \end{equation*}
    for $\alpha_i > 1$. In addition, it is bounded from above and has optimizers $(y_{\alpha,0},y'_{\alpha,0})$. We can thus apply Proposition \ref{proposition:Jensen_Alexandrov_cutoff} with 
    \begin{equation} \label{eqn:optimizer_construct_1}
        \eta_i = \frac{1}{\alpha_i}, \qquad \epsilon_{1,i} = \frac{\varepsilon_i}{1\boxminus\varepsilon} \varphi, \qquad \epsilon_{2,i} = \frac{\varepsilon_i}{1\boxplus\varepsilon} \varphi.
    \end{equation}

    Consequently, it follows that there exist
    \begin{align}
        p_{\alpha} \in \big(B_{\alpha^{-1}_1}(0)\cap \{E_1- y_{\alpha,0,1}\}\big)\times \big(B_{\alpha^{-1}_2}(0)\cap \{E_2-y_{\alpha,0,2}\}\big),\\
        p'_{\alpha} \in \big(B_{\alpha^{-1}_1}(0)\cap \{E_1 - y'_{\alpha,0,1}\}\big)\times \big(B_{\alpha^{-1}_2}(0)\cap \{E_2-y'_{\alpha,0,2}\}\big),
    \end{align}
    such that $y_{\alpha},y_{\alpha}'$ are optimizers of
    \begin{equation}\label{eqn:constructionOptimizersHatPhi}
        \ssup{\widehat{\Lambda}_\alpha} = \widehat{\Lambda}_\alpha(y_{\alpha},y_{\alpha}'),
    \end{equation}
    where
    \begin{equation} \label{eqn:constructionOptimizersHatPhi2}
    \widehat{\Lambda}_\alpha(y,y') := \Lambda_\alpha(y,y')  - \frac{\varepsilon}{1\boxminus\varepsilon} \varphi \Xi_1^0(y) - \frac{\varepsilon}{1\boxplus\varepsilon} \varphi \Xi_2^0(y')
    \end{equation}
    with $\Xi^0_1$ and $\Xi^0_2$ as defined above.
    This establishes \eqref{eqn:proposition:optimizing_point_construction:optimizing_points}. An additional penalization around $(y_{\alpha},y_{\alpha}')$ then gives \eqref{eqn:proposition:optimizing_point_construction:optimizing_points_unique}. 
    A secondary outcome of Proposition \ref{proposition:Jensen_Alexandrov_cutoff} is that $\widehat{\Lambda}_\alpha$ is twice differentiable in the optimizing point $(y_{\alpha}, y_{\alpha}')$, establishing \ref{item:proposition:optimizing_point_construction_twice_diff}. Furthermore, the optimizers satisfy
    \begin{equation} \label{eqn:optimizer_construct_2}
    \bfd(y_{\alpha},y_{\alpha,0}) < \eta_1 + \eta_2, \qquad \bfd(y_{\alpha}',y_{\alpha,0}') < \eta_1 + \eta_2,
    \end{equation}
    which, together with \eqref{eqn:optimizer_construct_1}, yields
    \begin{equation}\label{eqn:optimizer_construct_3}
        \max \left\{ \bfd(y_{\alpha},y_{\alpha,0}), \bfd(y_{\alpha}',y_{\alpha,0}')\right\} \leq \frac{1}{\alpha_1} + \frac{1}{\alpha_2},
    \end{equation}
    establishing \ref{item:proposition:optimizing_point_Jensencontrol}. 

    % Proof of (d)
    {\bfseries Proof of \ref{item:proposition:optimizing_point_construction:Ralpha_u_Ralpha_v_optimizers}:}
    This follows immediately from \cite[Lemma 5.2 (e)]{DCFuKrNe24}. 

    {\bfseries Proof of \ref{item:item:proposition:optimizing_point_construction:shift_optimal}:}
    We only establish
    \begin{equation*}
        u(x_{\alpha}) - P^\alpha[u] \circ s_{x_{\alpha}-y_{\alpha}}(x_{\alpha}) = \ssup{ u - P^\alpha[u] \circ s_{x_{\alpha}-y_{\alpha}}},
    \end{equation*}
    as the second equation follows similarly. Note that, by definition of $P^\alpha[u]$, for all 
    $x \in E$, we have
    \begin{equation*}
        P^\alpha[u] \circ s_{x_{\alpha}-y_{\alpha}}(x) \geq u(x) - \frac{\alpha}{2} \bfd^2\big(x, s_{x_{\alpha} - y_{\alpha}}(x)\big).
    \end{equation*}
    On the other hand, by \ref{item:proposition:optimizing_point_construction:Ralpha_u_Ralpha_v_optimizers}, we have
    \begin{equation*}
        P^\alpha[u] \circ s_{x_{\alpha}-y_{\alpha}}(x_{\alpha}) = P^\alpha[u](y_{\alpha}) = u(x_{\alpha}) - \frac{\alpha}{2} \bfd^2\left(x_{\alpha}, y_{\alpha}\right).
    \end{equation*}
    Combining the two statements yields, for any 
    $x \in E$, that
    \begin{align*}
        & u(x_{\alpha}) - P^\alpha[u] \circ s_{x_{\alpha}-y_{\alpha}}(x_{\alpha}) \\
        & \qquad = \frac{\alpha}{2} \bfd^2\left(x_{\alpha}, y_{\alpha} \right) + P^\alpha[u] \circ s_{x_{\alpha}-y_{\alpha}}(x) - P^\alpha[u] \circ s_{x_{\alpha}-y_{\alpha}}(x) \\
        & \qquad \geq u(x) - P^\alpha[u] \circ s_{x_{\alpha}-y_{\alpha}}(x) + \frac{\alpha}{2}\left(\bfd^2\left(x_{\alpha}, y_{\alpha} \right) - \bfd^2\left(x, s_{x_{\alpha} - y_{\alpha}}(x)\right)\right) \\
        & \qquad =  u(x) - P^\alpha[u] \circ s_{x_{\alpha}-y_{\alpha}}(x) 
    \end{align*}
    as the shift map preserves distances. 
    This establishes \ref{item:item:proposition:optimizing_point_construction:shift_optimal}.

    \smallskip

    For the proof of the final five properties, we consider the limit $\alpha_1, \alpha_2 \rightarrow \infty$. Note that even though all statements below are for $\alpha_1, \alpha_2 \rightarrow \infty$, analogous statements hold for fixed $\alpha_2$ as $\alpha_1 \rightarrow \infty$ and fixed $\alpha_1$ as $\alpha_2 \rightarrow \infty$. Thus, the iterated and double limits converge to the same value.
    
    \smallskip

    % Proof of (f)
    {\bfseries Proof of \ref{item:proposition:optimizing_point_construction_0optimizers_convergence}:}
    Consider $ \ssup{\Lambda_\alpha}$:
    \begin{align*}
        \ssup{\Lambda_\alpha} & = \frac{1}{1\boxminus\varepsilon} P^\alpha[u](y_{\alpha,0}) - \frac{1}{1\boxplus\varepsilon} P_\alpha[v](y'_{\alpha,0}) - \alpha \Phi(y_{\alpha,0},y'_{\alpha,0}) \\
        & \qquad - \frac{\varepsilon}{1\boxminus\varepsilon} (1-\varphi) \bfV(y_{\alpha,0}) - \frac{\varepsilon}{1\boxplus\varepsilon} (1-\varphi) \bfV(y_{\alpha,0}').
    \end{align*}
    Note, that $\ssup{\Lambda_\alpha}$ is decreasing in $\alpha_1$ and $\alpha_2$, since $-\frac{\alpha}{2} \Phi$, $P^{\alpha}[u]$, and $- P_\alpha [v]$ are decreasing in $\alpha$ by \cite[Lemma 5.2 (c)]{DCFuKrNe24}. 
    Note in addition that, by evaluating $\Lambda_\alpha$ in the particular choice $(y,y') = (\hat{y}, \hat{y})$ as above, we have, by \cite[Lemma 5.2 (a)]{DCFuKrNe24}, that
    \begin{equation*}
        \ssup{\Lambda_\alpha}  \geq \frac{1}{1\boxminus\varepsilon} P^\alpha[u](\hat{y}) - \frac{1}{1\boxplus\varepsilon} P_\alpha[v](\hat{y}) \geq \frac{1}{1\boxminus\varepsilon} u(\hat{y}) - \frac{1}{1\boxplus\varepsilon} v(\hat{y}),
    \end{equation*}
    which is lower bounded uniformly in both $\alpha_1$ and $\alpha_2$. It follows that $\lim_{\alpha_1, \alpha_2 \to \infty} \ssup{\Lambda_\alpha} $ exists.

    For any $\alpha_1, \alpha_2 > 1$, we find    
    \begin{align}
         \ssup{\Lambda_{\alpha/2}}  &\geq \frac{1}{1\boxminus\varepsilon} P^{\alpha/2}[u](y_{\alpha,0}) - \frac{1}{1\boxplus\varepsilon} P_{\alpha/2}[v](y'_{\alpha,0}) - \frac{\alpha}{2}\Phi(y_{\alpha,0},y'_{\alpha,0}) \nonumber \\
        &\qquad - \frac{\varepsilon}{1\boxminus\varepsilon}(1-\varphi)\bfV(y_{\alpha,0}) - \frac{\varepsilon}{1\boxplus\varepsilon} (1-\varphi) \bfV(y_{\alpha,0}') \nonumber \\
        &\geq \ssup{\Lambda_\alpha} + \frac{\alpha}{2}\Phi(y_{\alpha,0},y_{\alpha,0}'),
        \label{eqn:Malpha_bound}
    \end{align}
    which implies that $\lim_{\alpha_1, \alpha_2 \to \infty}\alpha \Phi(y_{\alpha,0},y_{\alpha,0}')=0$, as $\ssup{\Lambda_\alpha}$ and $\ssup{\Lambda_{\alpha/2}}$ converge to the same limit, establishing \ref{item:proposition:optimizing_point_construction_0optimizers_convergence}.

    % Proof of (g)
    {\bfseries Proof of \ref{item:proposition:optimizing_point_construction_2optimizers_convergence}:}
    We follow the same approach as in \eqref{eqn:Malpha_bound} but now expanding $P^\alpha[u](y_{\alpha})$ and $P_\alpha[v](y_{\alpha}')$ to obtain an optimization problem in terms of four variables: By \eqref{eqn:constructionOptimizersHatPhi2}, we have
    \begin{equation}
        \begin{aligned}
            \ssup{\Lambda_{\alpha/2}} &\geq \frac{1}{1\boxminus\varepsilon} P^{\alpha/2}[u](y_{\alpha}) - \frac{1}{1\boxplus\varepsilon} P_{\alpha/2}[v](y'_{\alpha}) - \frac{\alpha}{2}\Phi(y_{\alpha},y'_{\alpha}) \\
            & \qquad - \frac{\varepsilon}{1\boxminus\varepsilon}(1-\varphi)\bfV(y_{\alpha}) - \frac{\varepsilon}{1\boxplus\varepsilon}(1-\varphi) \bfV(y_{\alpha}')\\
            &  \geq \ssup{\widehat{\Lambda}_{\alpha}} + \frac{\alpha}{4} \left(\frac{1}{1\boxminus\varepsilon}\bfd^2(x_{\alpha},y_{\alpha}) + 2\Phi(y_{\alpha},y_{\alpha}') + \frac{1}{1\boxplus\varepsilon} \bfd^2(y_{\alpha}',x_{\alpha}') \right) \\
            & \qquad + \frac{\eps}{1\boxminus\eps} \varphi \Xi_1^0(y_{\alpha}) + \frac{\eps}{1\boxplus\eps} \varphi \Xi_2^0(y'_{\alpha}).
        \end{aligned}
    \end{equation}
    It follows that 
    \begin{multline}
        \frac{\alpha}{4} \left(\frac{1}{1\boxminus\varepsilon}\bfd^2(x_{\alpha},y_{\alpha}) + 2\Phi(y_{\alpha},y_{\alpha}') + \frac{1}{1\boxplus\varepsilon} \bfd^2(y_{\alpha}',x_{\alpha}') \right)\\ 
        \leq \ssup{\Lambda_{\alpha/2}} - \ssup{\widehat{\Lambda}_{\alpha}} - \frac{\eps}{1\boxminus\eps} \varphi \Xi_1^0(y_{\alpha}) - \frac{\eps}{1\boxplus\eps} \varphi \Xi_2^0(y'_{\alpha}).
    \end{multline}
    Note that plugging the definition of $\eta$ as in equation \eqref{eqn:optimizer_construct_1} into the results of Corollary \ref{corollary:Jensen_optimizerbound} yields
    \begin{equation}\label{eqn:Xi_optimizer_nonnegative}
    \begin{split}
    0 \leq - \frac{\varepsilon}{1\boxminus\varepsilon} \varphi \Xi_1^0(y_{\alpha})  - \frac{\varepsilon}{1\boxplus\varepsilon} \varphi \Xi_2^0(y_{\alpha}') \leq \varphi \left(\frac{2\varepsilon_1}{(1\boxminus\eps)(1\boxplus\eps)} \frac{1}{\alpha_1} + \frac{2\varepsilon_2}{(1\boxminus\eps)(1\boxplus\eps)} \frac{1}{\alpha_2}\right)
    \end{split}
    \end{equation}
    and
    \begin{equation} \label{eqn:Jensen_control_optimizationproblem_inOptimizerConstruction}
        \ssup{\Lambda_\alpha} \leq \ssup{\widehat{\Lambda}_\alpha}  = \widehat{\Lambda}_\alpha(y_{\alpha},y_{\alpha}') 
        \leq \ssup{\Lambda_\alpha} + \varphi \frac{2\varepsilon}{(1\boxminus\eps)(1\boxplus\eps)} \frac{1}{\alpha}.
    \end{equation}
    Consequently, by \eqref{eqn:Xi_optimizer_nonnegative}, we have
    \begin{equation} 
        \lim_{\alpha_1,\alpha_2 \rightarrow \infty} \frac{\varepsilon}{1\boxminus\varepsilon} \varphi \Xi_{1}^0(y_{\alpha}) + \frac{\varepsilon}{1\boxplus\varepsilon} \varphi \Xi_{2}^0(y_{\alpha}') = 0
    \end{equation}
    and, combining \eqref{eqn:Jensen_control_optimizationproblem_inOptimizerConstruction} with \ref{item:proposition:optimizing_point_construction_0optimizers_convergence}, we obtain
    \begin{equation} 
        \lim_{\alpha_1,\alpha_2 \rightarrow \infty} \ssup{\Lambda_\alpha} = \lim_{\alpha_1,\alpha_2 \rightarrow \infty} \ssup{\widehat{\Lambda}_\alpha}.
    \end{equation}
    Thus, we find
    \begin{equation*}
        \lim_{\alpha \rightarrow \infty} \alpha \left(\bfd^2(x_{\alpha},y_{\alpha}) + \Phi(y_{\alpha},y_{\alpha}') + \bfd^2(y_{\alpha}',x_{\alpha}') \right) = 0.
    \end{equation*}
    From this, \ref{item:proposition:optimizing_point_construction_2optimizers_convergence} follows using Young's inequality.

    % Proof of (h)
    {\bfseries Proof of \ref{item:proposition:optimizing_point_construction_compact2}:}
    \ref{item:proposition:optimizing_point_construction_compact0}, \ref{item:proposition:optimizing_point_Jensencontrol}, \ref{item:proposition:optimizing_point_construction_0optimizers_convergence}, and \ref{item:proposition:optimizing_point_construction_2optimizers_convergence} imply \ref{item:proposition:optimizing_point_construction_compact2} by considering a bounded blow-up $K_{\varepsilon}$ of $K_{\varepsilon,0}$ that is contained in $E$.

    Before we continue with the rest of the proof, we point out that
    \begin{align}
        1\boxplus\eps = 1 + \eps_1 + \eps_2 \quad\text{and}\quad
        1\boxminus\eps = 1 - \eps_1 - \eps_2 .
    \end{align}

    % Proof of (i)
    {\bfseries Proof of \ref{item:proposition:optimizing_point_construction_estimate_u-v}:}
    Let $K \subseteq E$ be compact. Set
    \begin{equation}\label{eq:def_c_K_alpha}
        c_{K, \alpha}^u \coloneqq \ssup{\bfV -P^\alpha [u]}_K \quad\text{and}\quad c_{K, \alpha}^v \coloneqq \ssup{\bfV- P_\alpha [v]}_K.
    \end{equation}
    Note that, for any $x\in E$, we have the following identity:
    \begin{align}
        &P^\alpha [u](x) - P_\alpha [v](x)\\
        &\quad = \frac{1}{1\boxminus\varepsilon}\left(P^\alpha [u](x) - \varepsilon (1-\varphi)\bfV(x)\right) - \frac{1}{1\boxplus\varepsilon}\left(P_\alpha [v](x) + \varepsilon (1-\varphi)\bfV(x)\right) \\
        & \qquad - \frac{\varepsilon}{1\boxminus\varepsilon}(P^\alpha [u](x) -(1-\varphi)\bfV(x)) - \frac{\varepsilon}{1\boxplus\varepsilon}(P_\alpha [v](x) -(1-\varphi)\bfV(x))
        \label{eq:u-v_identity}
    \end{align}

    Consequently, we can estimate
    \begin{align}
        &\ssup{u-v}_K \leq \ssup{P^\alpha [u] - P_\alpha [v]}_K = \sup_{x \in K} P^\alpha [u](x) - P_\alpha [v](x) \\
        &\quad = \sup_{x \in K} \frac{1}{1\boxminus\varepsilon}\left(P^\alpha [u](x) - \varepsilon (1-\varphi)\bfV(x)\right) - \frac{1}{1\boxplus\varepsilon}\left(P_\alpha [v](x) + \varepsilon (1-\varphi)\bfV(x)\right) \\
        & \qquad - \frac{\varepsilon}{1\boxminus\varepsilon}(P^\alpha [u](x) -(1-\varphi)\bfV(x)) - \frac{\varepsilon}{1\boxplus\varepsilon}(P_\alpha [v](x) -(1-\varphi)\bfV(x))\\
        &\quad \leq \sup_{x \in K} \frac{1}{1\boxminus\varepsilon}\left(P^\alpha [u](x) - \varepsilon (1-\varphi)\bfV(x)\right) - \frac{1}{1\boxplus\varepsilon}\left(P_\alpha [v](x) + \varepsilon (1-\varphi)\bfV(x)\right) \\
        & \qquad + \frac{\varepsilon}{1\boxminus\varepsilon}c_{K, \alpha}^u + \frac{\varepsilon}{1\boxplus\varepsilon}c_{K, \alpha}^v\\
        &\quad \leq \sup_{x \in E} \frac{1}{1\boxminus\varepsilon} \left(P^\alpha [u](x) - \varepsilon (1-\varphi)\bfV(x) \right)  - \frac{1}{1\boxplus\varepsilon} \left( P_\alpha [v](x) + \varepsilon (1-\varphi)\bfV(x)\right) \\
        &\qquad  + \frac{\varepsilon}{1\boxminus\varepsilon}c_{K, \alpha}^u + \frac{\varepsilon}{1\boxplus\varepsilon}c_{K, \alpha}^v\\
        &\quad \leq \sup_{y,y' \in E} \frac{1}{1\boxminus\varepsilon} \left(P^\alpha [u](y) - \varepsilon (1-\varphi) \bfV(y) \right) - \frac{1}{1\boxplus\varepsilon} \left( P_\alpha[v](y') + \varepsilon (1-\varphi) \bfV(y')\right)  \\
        &\qquad - \alpha \Phi(y,y') + \frac{\varepsilon}{1\boxminus\varepsilon}c_{K, \alpha}^u + \frac{\varepsilon}{1\boxplus\varepsilon}c_{K, \alpha}^v\\
        &\quad = \ssup{\Lambda_\alpha} + \frac{\varepsilon}{1\boxminus\varepsilon}c_{K, \alpha}^u + \frac{\varepsilon}{1\boxplus\varepsilon}c_{K, \alpha}^v \label{eq:improved_u-v_estimate}
    \end{align}

    Combining the above estimate with $\ssup{\Lambda_\alpha} \leq \ssup{\widehat{\Lambda}_\alpha}  = \widehat{\Lambda}_\alpha(y_{\alpha},y_{\alpha}')$, see equation \eqref{eqn:Jensen_control_optimizationproblem_inOptimizerConstruction}, then dropping the non-positive terms and using inequality \eqref{eqn:Xi_optimizer_nonnegative}, leads to
    \begin{align}\label{eq:outcome_i}
         &\ssup{u-v}_K  
         \leq \frac{1}{1\boxminus\varepsilon}\big(u(x_\alpha) - \varepsilon(1-\varphi) \bfV(y_\alpha)\big) - \frac{1}{1\boxplus\varepsilon}\big(v(x_\alpha') + \varepsilon (1-\varphi) \bfV(y_\alpha')\big) \\
         &\qquad + \varphi \frac{2\varepsilon}{(1\boxminus\eps)(1\boxplus\eps)} \frac{1}{\alpha} + \frac{\varepsilon}{1\boxminus\varepsilon}c_{K,\alpha}^u + \frac{\varepsilon}{1\boxplus\varepsilon}c_{K,\alpha}^v,
    \end{align}
    which proves \ref{item:proposition:optimizing_point_construction_estimate_u-v}.

    % Proof of (j)
    {\bfseries Proof of \ref{item:proposition:optimizing_limits}:}
    We start by proving that any limiting point of 
    \begin{equation*}(x_{\alpha},y_{\alpha},y_{\alpha,0},y_{\alpha,0}',y_{\alpha}',x_{\alpha}')
    \end{equation*}
    as $\alpha_1, \alpha_2 \rightarrow \infty$ is of the form $(z,z,z,z,z,z)$. We only prove $\lim_{\alpha_1, \alpha_2 \rightarrow \infty} \bfd(x_{\alpha},y_{\alpha}) = 0$, as the other limits follow analogously. 
    
    By \ref{item:proposition:optimizing_point_construction_compact2}, we find that, along subsequences, $(x_{\alpha},y_{\alpha}) \rightarrow (x_0,y_0)$. Assume by contradiction that $x_0 \neq y_0$. Then, since $\alpha \bfd^2$ is increasing in $\alpha$, we get that, for all $\alpha_0 >1$,
    \begin{equation}
        \liminf_{\alpha_1,\alpha_2 \to \infty} \alpha \bfd^2(x_{\alpha}, y_{\alpha}) \geq \alpha_0 \bfd^2(x_0,y_0).
    \end{equation}
    We can conclude that $\alpha \bfd^2(x_{\alpha},y_{\alpha}) \to \infty$, contradicting \ref{item:proposition:optimizing_point_construction_2optimizers_convergence}.
    Note that the same argument also works for the $\Phi$ term, as $\Phi$ separates points.
    
    We proceed to prove that any limiting point $z$ lies in $\widehat{K}$. 
    Considering the outcome of \ref{item:proposition:optimizing_point_construction_estimate_u-v} in equation \eqref{eq:outcome_i}, taking the limit for $\alpha_1, \alpha_2 \to \infty$ and $\varphi\downarrow 0$, and rearranging, \ref{item:proposition:optimizing_limits} follows.
\end{proof}

\begin{remark}\label{remark:c_K_convergence}
    Note that due to the properties of the $\sup$- and $\inf$-convolutions, cf.\ \cite[Theorem 3.5.8]{CaSi04}, we have that
    \begin{align}
        c_{K, \alpha}^u = \ssup{\bfV - P^\alpha [u]}_K &\xrightarrow{\alpha \to \infty} \ssup{\bfV-u}_K = c_K^u,\\
        c_{K, \alpha}^v = \ssup{\bfV-P_\alpha [v]}_K &\xrightarrow{\alpha \to \infty}\ssup{\bfV-v}_K = c_K^v.
    \end{align}
\end{remark}

\subsection{Test function construction}

The proposition in this section builds upon the constructed optimizers from Proposition \ref{prop:optimizer_construction} to build suitable test functions. Note that the sup- and inf-convolution, $P^\alpha[u]$ and $P_\alpha[v]$, are not guaranteed to be smooth but, as we showed above, their second derivatives exist in the relevant optimizing points.

Using the difference between $\Xi_1^0$ and $\Xi_2^0$ on one hand and $\Xi_1$ and $\Xi_2$ on the other, we can use the Whitney Extension Theorem \cite[Theorem 2.3.6]{MR1996773} to find globally $C^\infty$ functions, $\widehat{f}_\dagger$ and $\widehat{f}_\ddagger$, that are squeezed in between  
and can be used to replace $P^\alpha[u]$ and $P_\alpha[v]$ in the comparison proof. 

In the subsolution case, for example, we start by first constructing $\widehat{f}_1 \in C_c^\infty(\domain)$, which, by re-arrangement, satisfies
\begin{equation*}
    \widehat{f}_1 (y) \approx \frac{1}{1\boxminus\varepsilon} P^\alpha[u](y) - \frac{\varepsilon}{1\boxminus\varepsilon} (1-\varphi) \bfV(y)  - \frac{\varepsilon}{1\boxminus\varepsilon} \varphi \Xi_1(y)
\end{equation*}
around the constructed optimizers and is constant outside of a compact set. As $\bfV$ has compact sublevel sets and other terms on the right-hand side are bounded from above, it suffices to first perform a smooth approximation and cut off the result. 

Recall that $1\boxminus\eps = 1-\eps_1-\eps_2$ and $1\boxplus \eps = 1+\eps_1+\eps_2$.

\begin{proposition}[Test function construction] \label{proposition:test_function_construction}
    Consider the setting of Proposition \ref{prop:optimizer_construction}. Fix $\eps = (\eps_1, \eps_2)$ with $\eps_1, \eps_2 \in (0,1)$, $\varphi \in (0, 1)$, and $\alpha = (\alpha_1, \alpha_2)$ with $\alpha_1, \alpha_2 > 1$. Then, there are functions $f_1,f_2, \widehat{f}_1,\widehat{f}_2 \in C_c^\infty(\domain)$ such that
    \begin{align*}
        f_1 = \widehat{f}_1 \circ s_{x_{\alpha}-y_{\alpha}}, \qquad f_2 = \widehat{f}_2 \circ s_{x_{\alpha}'-y_{\alpha}'},\\
        f_\dagger = \widehat{f}_\dagger \circ s_{x_{\alpha}-y_{\alpha}}, \qquad f_\ddagger = \widehat{f}_\ddagger \circ s_{x_{\alpha}'-y_{\alpha}'},
    \end{align*}
    with
    \begin{align*}
        \widehat{f}_\dagger &=  (1\boxminus\eps) \widehat{f}_1 + (1-\varphi)\eps \bfV + \varphi\eps \Xi, \\
        \widehat{f}_\ddagger &=  (1\boxplus\eps) \widehat{f}_2 - (1-\varphi)\eps\bfV - \varphi\eps\Xi.
    \end{align*}
    satisfying the following properties:

    \noindent For $\widehat{f}_1,\widehat{f}_2$ and $f_1,f_2$, we have
    \begin{enumerate}[(a)]
        \item \label{item:test_function_construction:optimizing_f1f2} The pair $(y_{\alpha},y_{\alpha}')$ is the unique optimizing pair of
        \begin{equation*}
            \widehat{f}_1(y_{\alpha}) - \widehat{f}_2(y_{\alpha}') - \alpha \Phi(y_{\alpha},y_{\alpha}') = \ssup{ \widehat{f}_1 -\widehat{f}_2 - \alpha \Phi }
        \end{equation*}
        and the pair $(x_{\alpha},x_{\alpha}')$ is the unique optimizing pair of
        \begin{equation}
            f_{1}(x_{\alpha}) - f_{2}(x_{\alpha}') - \alpha \Phi_{x_{\alpha}-y_{\alpha},\; x_{\alpha}'-y_{\alpha}'}(x_{\alpha},x_{\alpha}')
            = \ssup{f_{1} - f_{2} -\alpha \Phi_{x_{\alpha}-y_{\alpha},\; x_{\alpha}'-y_{\alpha}'} }.
        \end{equation}   
    \end{enumerate}
    For $\widehat{f}_\dagger,\widehat{f}_\ddagger$, and $f_\dagger,f_\ddagger$ we have
    \begin{enumerate}[(a),resume]       
        \item \label{item:test_function_construction:f1f2_squeeze_Ralpha_u_Ralpha_v} We have
        \begin{equation} \label{eqn:bounds_for_Ralpha}
            P^\alpha[u](y)  \leq \widehat{f}_{\dagger}(y) \quad\text{and}\quad
                P_\alpha[v](y')  \geq \widehat{f}_\ddagger(y')
        \end{equation}
        with equality in $y_{\alpha}$ and $y_{\alpha}'$, respectively.
       \item \label{item:test_function_construction:fdagger_f_ddagger_test_functions_uv} We have that $x_{\alpha}, x'_{\alpha}$ are the unique points such that
       \begin{equation}
           u(x_{\alpha}) - f_\dagger(x_{\alpha}) = \ssup{ u - f_\dagger } \quad\text{and}\quad v(x_{\alpha}') - f_\ddagger(x_{\alpha}')  = \ssup{ v - f_\ddagger }.
       \end{equation}
        \item \label{item:test_function_construction:f1f2_equal_Ralpha_u_Ralpha_v_derivatives} We have
        \begin{equation}\label{eqn:bounds_for_Ralpha_derivatives}
            \begin{aligned} 
                D \widehat{f}_\dagger(y_{\alpha}) & = D f_\dagger (x_{\alpha}) =  \alpha(y_{\alpha}-x_{\alpha}), 
                &D^2 \widehat{f}_\dagger(y_{\alpha}) = D^2 f_\dagger(x_{\alpha}), \\
                D \widehat{f}_\ddagger(y_{\alpha}') & = D f_\ddagger (x_{\alpha}')  = \alpha(x_{\alpha}'-y_{\alpha}'), \
                &D^2 \widehat{f}_\ddagger(y_{\alpha}')  = D^2f_\ddagger(x_{\alpha}').
            \end{aligned}
        \end{equation}
    \end{enumerate}
\end{proposition}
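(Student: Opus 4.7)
Set
\[
k(y) := \tfrac{1}{1\boxminus\eps}\bigl(P^\alpha[u](y) - \eps(1-\varphi)\bfV(y) - \eps\varphi\Xi_1(y)\bigr),
\]
and define $m(y')$ analogously using $P_\alpha[v], \Xi_2$, and the factor $1/(1\boxplus\eps)$. By Proposition \ref{prop:optimizer_construction}(b), the pair $(y_\alpha, y_\alpha')$ is the \emph{unique} global maximizer of $k(y) - m(y') - \alpha\Phi(y, y')$ on $E \times E$; by item (d) together with the $C^1$ semi-concavity of $\bfV, \Xi_1$, a Jensen--Alexandrov argument (as in the proof of (d)) provides well-defined $2$-jets for $k$ at $y_\alpha$ and $m$ at $y_\alpha'$. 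The goal is to construct $\widehat{f}_1 \in C^\infty_c(E)$ that globally majorizes $k$, agrees with $k$ to second order at $y_\alpha$, and is constant outside a compact set; symmetrically, $\widehat{f}_2 \in C^\infty_c(E)$ minorizing $m$. The identities in the statement then serve as definitions of $\widehat{f}_\dagger, \widehat{f}_\ddagger, f_1, f_2, f_\dagger, f_\ddagger$.

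The construction of $\widehat{f}_1$ proceeds in three steps. First, I prescribe the $2$-jet $j^2_{y_\alpha}\widehat{f}_1 = j^2_{y_\alpha}k$ and select a compact $K \supseteq \{y_\alpha\}$ outside of which $\widehat{f}_1$ will equal $C^+ := \sup_E k + 1$; finiteness of $\sup_E k$ uses the growth condition $u \in o(V_2)$ (when $E_2$ is unbounded) combined with the compact sublevel sets of $\bfV$, which force the penalization $-\tfrac{\eps(1-\varphi)}{1\boxminus\eps}\bfV$ to eventually dominate $P^\alpha[u]$ and drive $k \to -\infty$. Second, I apply the Whitney Extension Theorem \cite[Theorem 2.3.6]{MR1996773} to patch together the Taylor polynomial $T_2[k; y_\alpha]$ near $y_\alpha$ with the constant $C^+$ away from $K$, yielding a globally $C^\infty$ candidate with the prescribed data. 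Third, I add a smooth non-negative correction vanishing to sufficiently high order at $y_\alpha$ to dominate the Taylor remainder $k - T_2[k; y_\alpha] = o(|y - y_\alpha|^2)$ and enforce $\widehat{f}_1 \geq k$ globally; the analogous construction produces $\widehat{f}_2 \leq m$ with $C^- := \inf_E m - 1$ outside a compact set.

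Properties (b) and (d) follow immediately. For (b), $\widehat{f}_\dagger - P^\alpha[u] = (1\boxminus\eps)(\widehat{f}_1 - k) \geq 0$ with equality at $y_\alpha$, and analogously for $\widehat{f}_\ddagger$. For (d), the matching $2$-jets of $\widehat{f}_1$ and $k$ at $y_\alpha$ transfer to $\widehat{f}_\dagger$ and $P^\alpha[u]$ via the linear definition of $\widehat{f}_\dagger$, and to $f_\dagger$ via the chain rule for the shift $s_{x_\alpha - y_\alpha}$; the explicit form $\nabla \widehat{f}_\dagger(y_\alpha) = \alpha(y_\alpha - x_\alpha)$ (up to sign convention) comes from the envelope-theorem identity at the sup-convolution optimizer $x_\alpha$. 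For (c), the sandwich $u - f_\dagger \leq u - P^\alpha[u]\circ s_{x_\alpha - y_\alpha}$ together with equality at $x_\alpha$ and Proposition \ref{prop:optimizer_construction}(e) transfers the unique maximum to $x_\alpha$; uniqueness follows since $\widehat{f}_1 > k$ strictly off $y_\alpha$.

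The main obstacle is the unique global maximization in (a). Since $\widehat{f}_1 \geq k$ and $\widehat{f}_2 \leq m$, the inequality $\widehat{f}_1(y) - \widehat{f}_2(y') - \alpha\Phi \geq k(y) - m(y') - \alpha\Phi$ points \emph{against} directly transferring the strict uniqueness from Proposition \ref{prop:optimizer_construction}(b). I plan to resolve this in two stages. Locally, the matching $2$-jets together with the strictly negative-definite Hessian of $(y, y') \mapsto k(y) - m(y') - \alpha\Phi(y, y')$ at $(y_\alpha, y_\alpha')$ -- which is guaranteed by the $\bfxi_{y_\alpha}, \bfxi_{y_\alpha'}$ terms that distinguish $\Xi_1, \Xi_2$ from $\Xi_1^0, \Xi_2^0$ in Proposition \ref{prop:optimizer_construction}(b) -- make $(y_\alpha, y_\alpha')$ a strict local maximum of $\widehat{f}_1 - \widehat{f}_2 - \alpha\Phi$ once the Whitney corrections are arranged to vanish faster than the local Hessian coercivity. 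Globally, the plateau values $C^\pm$ and the interpolation region must be tuned so that, together with $\alpha\Phi \to \infty$ along divergent sequences, $\widehat{f}_1 - \widehat{f}_2 - \alpha\Phi$ lies strictly below its value at $(y_\alpha, y_\alpha')$ outside a fixed neighborhood; this tuning, balancing the size of $C^\pm$ against the strict-maximum gap of $k - m - \alpha\Phi$, is the main technical subtlety of the proof. The shifted statement for $(x_\alpha, x_\alpha')$ then follows because the shifts $s_{x_\alpha - y_\alpha}, s_{x_\alpha' - y_\alpha'}$ preserve the structure of $\Phi$ after the companion translation of its arguments.
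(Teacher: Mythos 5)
You have the right architecture (Whitney extension plus cut-off, then define $\widehat{f}_\dagger,\widehat{f}_\ddagger$ by the stated identities and read off (b)--(d)), but you replace the paper's central device with a weaker one, and the replacement does not close property (a). The paper (following \cite[Proposition 5.3, Lemma B.1]{DCFuKrNe24}) \emph{squeezes} $\widehat{f}_1$ two-sidedly between $\tfrac{1}{1\boxminus\eps}P^\alpha[u]-\tfrac{\eps(1-\varphi)}{1\boxminus\eps}\bfV-\tfrac{\eps\varphi}{1\boxminus\eps}\Xi_1$ (below) and the same expression with $\Xi_1^0$ in place of $\Xi_1$ (above); these two envelopes differ exactly by the nonnegative point penalization $\tfrac{\eps\varphi}{1\boxminus\eps}\bfxi_{y_\alpha}$, which vanishes only at $y_\alpha$. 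It is this \emph{upper} control by the $\Xi^0$-penalized function -- whose optimizers are exactly those of $\widehat{\Lambda}_\alpha$ from Proposition \ref{prop:optimizer_construction} -- combined with the strict uniqueness supplied by the extra $\bfxi$-penalization in \eqref{eqn:proposition:optimizing_point_construction:optimizing_points_unique}, that transfers unique optimality to $\widehat{f}_1-\widehat{f}_2-\alpha\Phi$. Your construction is one-sided ($\widehat{f}_1\geq k$ touching at $y_\alpha$ only), so, as you yourself note, the inequality runs the wrong way for (a), and you are forced into a local-plus-global salvage.

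Both halves of that salvage have genuine gaps. Locally, the "strictly negative-definite Hessian guaranteed by the $\bfxi_{y_\alpha},\bfxi_{y_\alpha'}$ terms" is not available: the second derivatives of $P^\alpha[u]$, $P_\alpha[v]$ at the optimizers exist only in the Jensen--Alexandrov sense and the second-order condition at a maximum of $\widehat{\Lambda}_\alpha$ gives negative \emph{semi}-definiteness; the additional penalizations $\xi_{i,z_i}$ of Definition \ref{definition:perturbation_first_second_order} are merely $C^1$, semi-concave, zero at $z_i$ and positive elsewhere -- semi-concavity bounds their curvature from \emph{above}, not below, so they need not contribute any strict concavity at $y_\alpha$ (and for non-canonical choices their Hessian there may vanish or fail to exist). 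Globally, the "tuning" of the plateau constants $C^\pm$ against the gap of $k-m-\alpha\Phi$ below its maximum is asserted but not carried out, and it is precisely on the Whitney interpolation annulus -- where $\widehat{f}_1-k$ and $m-\widehat{f}_2$ are positive and uncontrolled relative to that (unquantified) gap -- that a spurious maximizer could appear. The repair is to adopt the paper's two-sided squeeze, i.e., to impose the additional \emph{upper} bound $\widehat{f}_1\leq k+\tfrac{\eps\varphi}{1\boxminus\eps}\bfxi_{y_\alpha}$ globally (and symmetrically for $\widehat{f}_2$), rather than only matching the $2$-jet at $y_\alpha$.
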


As the proof of the above Proposition is analogous to the proof of \cite[Proposition 5.3]{DCFuKrNe24}, we only provide a short sketch here.

\begin{proof}[Sketch of the proof of Proposition \ref{proposition:test_function_construction}]
Using \cite[Lemma B.1]{DCFuKrNe24}, we find smooth functions $\mathfrak{f}_1, \mathfrak{f}_2 \in C^\infty(E)$ that are squeezed between $\frac{1}{1\boxminus\eps}P^\alpha[u] - \frac{\eps}{1-\eps}(1\boxminus\varphi)\bfV - \frac{\eps}{1\boxminus\eps}\Xi_1^0$ and $\frac{1}{1\boxminus\eps}P^\alpha[u] - \frac{\eps}{1\boxminus\eps}(1-\varphi)\bfV - \frac{\eps}{1\boxminus\eps}\Xi_1$ as well as their analogues for $v$. By construction, the same optimizers as in Proposition \ref{prop:optimizer_construction} also optimize $\ssup{\mathfrak{f}_1 - \mathfrak{f}_2 - \alpha \Phi}$. As we require our test functions to be in $C_c^\infty(E)$, we cut $\mathfrak{f}_1, \mathfrak{f}_2$ off in a suitable way and obtain $\widehat{f}_1, \widehat{f}_2$ such that
\begin{align}
    \widehat{f}_1 (y) &\approx \frac{1}{1\boxminus\varepsilon} P^\alpha[u](y) - \frac{\varepsilon}{1\boxminus\varepsilon} (1-\varphi) \bfV(y)  - \frac{\varepsilon}{1\boxminus\varepsilon} \varphi \Xi_1(y),\\
    \widehat{f}_2 (y') &\approx \frac{1}{1\boxplus\varepsilon} P_\alpha[v](y') + \frac{\varepsilon}{1\boxplus\varepsilon} (1-\varphi) \bfV(y')  + \frac{\varepsilon}{1\boxplus\varepsilon} \varphi \Xi_2(y')
\end{align}
holds around the constructed optimizers and they are constant outside of a compact set. Preparing for a later convexity estimate, we then set
\begin{align*}
    \widehat{f}_\dagger &=  (1\boxminus\eps) \widehat{f}_1 + (1-\varphi)\eps \bfV + \varphi\eps \Xi, \\
    \widehat{f}_\ddagger &=  (1\boxplus\eps) \widehat{f}_2 - (1-\varphi)\eps\bfV - \varphi\eps\Xi.
\end{align*}
Using the results from Proposition \ref{prop:optimizer_construction} and definitions of the test functions $\widehat{f}_\dagger$ and $\widehat{f}_\ddagger$ as well as their shifted versions $f_\dagger$ and $f_\ddagger$, the rest of the statements follow.
\end{proof}

\section{Proof of the strict comparison principle}\label{section:main_proof}
In this section, we utilize the optimizer and test function construction in Section \ref{sec:opt_construction} to prove the main result.
Section \ref{subsection:proof_comparison_toDiffH} connects sub- and supersolutions of $-\bH_1$ and $-\bH_2$ to those of $- H_+$ and $- H_-$, respectively, and contains a key proposition that states that the strict comparison principle holds if an estimate on the difference of Hamiltonians holds.
Section \ref{sec:proof_main_thm} then uses the results from the previous subsection for the proof of the strict comparison principle, Theorem \ref{th:comparison_HJI}.

\subsection{Comparison from an estimate on the Hamiltonians}\label{subsection:proof_comparison_toDiffH}
\begin{remark}
    We point out that if $\bH_1$ and $\bH_2$ satisfy Assumption \ref{assumption:domain_setup}, by definition, any subsolution to $ -\bH_1f = 0$ is also a viscosity subsolution to $- H_+ f = 0$ and any supersolution to $- \bH_2f = 0$ is also a viscosity supersolution to $- H_- f = 0$. In a setting with bounded test functions this argument requires the use of sequential denseness, cf.\ \cite[Lemma 6.1]{DCFuKrNe24}.
\end{remark}

\begin{lemma} \label{lemma:test_functions_in_domain}
    Let $\bH_1 \subseteq C(E) \times \big(C(\interior{E}) \cap \mathrm{LSC}(E)\big)$ and $\bH_2 \subseteq C(E) \times \big(C(\interior{E}) \cap \mathrm{USC}(E)\big)$ be operators satisfying Assumptions \ref{assumption:domain_setup} and \ref{assumption:compatibility}. 
    Let $\widehat{f_\dagger}, f_\dagger$ and $\widehat{f_\ddagger}, f_\ddagger$ be as in Theorem \ref{proposition:test_function_construction}.
    Then, $\widehat{f_\dagger}, f_\dagger \in \cD(H_+)$ and $\widehat{f_\ddagger}, f_\ddagger \in \cD(H_-)$.
\end{lemma}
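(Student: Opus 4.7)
The plan is to express each of the four test functions as one of the convex or affine combinations provided by Assumption~\ref{assumption:domain_setup}\ref{item:domain_setup_extended_affine}, with the ingredients drawn from $C_c^\infty(E)$, $\bfV$, and $\Xi = \Xi_{y_{\alpha,0}, p_\alpha, y_\alpha}$, and then to transfer the resulting domain membership from $\widehat{f}_\dagger, \widehat{f}_\ddagger$ to their shifted counterparts $f_\dagger, f_\ddagger$ via the compatibility Assumption~\ref{assumption:compatibility}.

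First I would collect the basic ingredients. Assumption~\ref{assumption:domain_setup}\ref{item:assumption_domain_setup:domainH} gives $\widehat{f}_1, \widehat{f}_2 \in C_c^\infty(E) \subseteq \cD(\bH_1) \cap \cD(\bH_2)$. Evaluating Assumption~\ref{assumption:compatibility} at the null shift $z = 0$ places $\bfV$ and $\Xi$ in $\cD(\bA_\theta) \cap \cD(\bB_\theta) \cap \cD(\bG_j)$ for every admissible $\theta$ and $j$; combined with the Hamilton--Jacobi--Bellman structure $\bH = \sup_\theta\{\bA_\theta + \bB_\theta - \cI(\cdot,\theta)\}$, this places $\bfV, \Xi \in \cD(\bH_1) \cap \cD(\bH_2)$. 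Furthermore, $\bfV$ has compact sublevel sets by Definition~\ref{definition:perturbation_containment}, and $\Xi$ has compact sublevel sets by the domination clause of Definition~\ref{definition:perturbation_first_second_order}, so $\bfV, \Xi \in C_+(E)$ while $-\bfV, -\Xi$ have compact superlevel sets; in particular $\bfV, \Xi \in \cD(H_+)$.

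For the convex-combination step, set $\delta := \eps_1 + \eps_2 \in (0,1)$, so that $1-\delta = 1\boxminus\eps$ and $1+\delta = 1\boxplus\eps$, and define
\[
    g := \frac{(1-\varphi)\eps\bfV + \varphi\eps\Xi}{\eps_1 + \eps_2}.
\]
The cone property of $\cD(\bH_1)$ applied to the direct-sum components, together with the convexity of $\cD(H_+)$ (Assumption~\ref{assumption:domain_setup}\ref{item:domain_setup_extended_convex}), shows $g \in \cD(H_+)$, whereupon Assumption~\ref{assumption:domain_setup}\ref{item:domain_setup_extended_affine} yields
\[
    \widehat{f}_\dagger = (1-\delta)\widehat{f}_1 + \delta g \in \cD(H_+), \qquad \widehat{f}_\ddagger = (1+\delta)\widehat{f}_2 - \delta g \in \cD(H_-).
\]

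For the shifted test functions $f_\dagger = \widehat{f}_\dagger \circ s_{x_\alpha - y_\alpha}$ and $f_\ddagger = \widehat{f}_\ddagger \circ s_{x_\alpha' - y_\alpha'}$, Proposition~\ref{prop:optimizer_construction}\ref{item:proposition:optimizing_point_construction_2optimizers_convergence} ensures that the shift vectors lie in $\overline{B_1(0)}$ for sufficiently large $\alpha_1, \alpha_2$, so the domain clauses in Assumption~\ref{assumption:compatibility} remain applicable to $\bfV \circ s_z$ and $\Xi \circ s_z$, and the preceding convex-combination argument can be repeated verbatim with the shifted ingredients. The main delicate point is the bookkeeping with the vector coefficient $\eps = (\eps_1, \eps_2)$: since $\eps\bfV = \eps_1 V_1 + \eps_2 V_2$ is not a scalar multiple of $\bfV$ when $\eps_1 \neq \eps_2$, one cannot reduce $g$ to a single positive rescaling of a single domain element and must instead invoke the cone and convexity properties jointly on the direct-sum components.
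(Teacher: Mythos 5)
Your overall strategy is the intended one: the paper itself disposes of this lemma by citing \cite[Lemma 6.2]{DCFuKrNe24}, and the decomposition $\widehat{f}_\dagger = (1-\delta)\widehat{f}_1 + \delta g$ with $\delta = \eps_1+\eps_2$ and $g = \big((1-\varphi)\eps\bfV + \varphi\eps\Xi\big)/(\eps_1+\eps_2)$ is exactly what Assumption \ref{assumption:domain_setup}\ref{item:domain_setup_extended_affine} was set up for. The gap is in the step establishing $g \in \cD(H_+)$. Membership in $\cD(H_+)$ forces membership in $C_+(E)$, i.e., compact sublevel sets. Your claim that ``$\Xi$ has compact sublevel sets by the domination clause of Definition \ref{definition:perturbation_first_second_order}'' is false in general: clause \ref{item:definition:penalization:domination} only guarantees positivity of $\bfxi + \bfzeta$ outside a ball, and for Collection 2 of Example \ref{example:keyPenalizations} (the cut-off penalizations, which the paper explicitly uses for the non-local terms) $\Xi$ is \emph{constant} outside a compact set, so its high sublevel sets are all of $E$. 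Likewise, the ``direct-sum components'' on which you propose to invoke the convexity of $\cD(H_+)$ --- $V_1$, $V_2$ and the $E_i$-components of $\Xi$, regarded as functions on $E$ --- do not individually have compact sublevel sets when the complementary factor is unbounded, so they are not in $C_+(E)$ and the convexity of $\cD(H_+)$ cannot be applied to them. As written, neither of your two routes to $g \in \cD(H_+)$ goes through.

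The repair is to treat $g$ as a whole rather than componentwise: $\eps\bfV = \eps_1 V_1 + \eps_2 V_2$ has compact sublevel sets because both $\eps_i>0$ and each $V_i$ is bounded below with compact sublevel sets in $E_i$, while $\eps\Xi$ is bounded below (continuity on $\overline{B_{R}(z_0)}$ plus positivity outside it, from clause \ref{item:definition:penalization:domination}); hence $g$ dominates a positive multiple of $\bfV$ up to a constant and lies in $C_+(E)$. Membership of $g$ in the domain of the operator then comes from the compatibility Assumption \ref{assumption:compatibility}, which places $\bfV\circ s_z$ and $\Xi_{z_0,p,z_1}\circ s_z$ in $\cD(\bA_\theta)\cap\cD(\bB_\theta)\cap\cD(\bG_j)$, combined with the cone and convexity structure of the domains in Assumption \ref{assumption:domain_setup}. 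A smaller point: for the shifted functions you invoke compatibility at $z = x_\alpha-y_\alpha$ only ``for sufficiently large $\alpha$'' via Proposition \ref{prop:optimizer_construction}\ref{item:proposition:optimizing_point_construction_2optimizers_convergence}, whereas the lemma is stated for all $\alpha_1,\alpha_2>1$; this restriction should either be justified by a direct bound on $\bfd(x_\alpha,y_\alpha)$ or made explicit.
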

\begin{proof}
    See \cite[Lemma 6.2]{DCFuKrNe24}.
\end{proof}

Note that, w.l.o.g., we are still considering the setup in which $E_1$ is bounded and thus has a boundary. For $E_2$ though, we make case distinctions. Basic examples of such cases are parabolic problems, for which the space component can have a boundary or not, i.e., parabolic equations on $[0, T] \times \bR^q$ or $[0,T] \times [-R, R]$. For a more detailed treatment of such examples we refer to Section \ref{sec:parabolic_example}.

% Control over H-H gives comparison
\begin{proposition}\label{proposition:basic_comparison_using_Hestimate}
    Let $\bH_1 \subseteq C(E) \times \big(C(\interior{E}) \cap \mathrm{LSC}(E)\big)$ and $\bH_2 \subseteq C(E) \times \big(C(\interior{E}) \cap \mathrm{USC}(E)\big)$ satisfy Assumptions \ref{assumption:domain_setup} and \ref{assumption:compatibility}. Consider the equations
    \begin{align}
        -H_+f &\leq 0,\label{eq:main_step:subsol}\\
        -H_-f &\geq 0.\label{eq:main_step:supersol}
    \end{align}
    Let $u$ and $v$ be viscosity sub- and supersolutions to \eqref{eq:main_step:subsol} and \eqref{eq:main_step:supersol}, respectively. For $\eps = (\eps_1, \eps_2)$ with $\eps_1, \eps_2 \in (0,1)$, $\varphi \in (0,1)$, and $\alpha = (\alpha_1, \alpha_2)$ with $\alpha_1, \alpha_2 > 1$, consider the optimizers $(x_\alpha, x_\alpha')$ and test functions $f_\dagger, f_\ddagger$, as constructed in Propositions \ref{prop:optimizer_construction} and \ref{proposition:test_function_construction}.

    Suppose there exist maps $\eps_1 \mapsto C^0_{\eps_1}$ and $\eps_2 \mapsto C^0_{\eps_2}$ such that, for $i \in \{1,2\}$, if $\partial E_i \neq \emptyset$, we have $-\infty<\limsup_{\eps_i \downarrow 0} C^0_{\eps_i} < 0$ and, if $\partial E_i = \emptyset$, $0\leq \limsup_{\eps_i \downarrow 0} C^0_{\eps_i} < \infty$, and a non-negative map $\varphi \mapsto C_{\eps, \varphi}$ with $\lim_{\varphi \downarrow 0} C_{\eps, \varphi} = 0$ such that
    \begin{equation}\label{eq:main_step:H-H_estimate}
        \liminf_{\alpha\rightarrow\infty} \frac{\bH f_\dagger (x_\alpha)}{(1-\eps_1-\eps_2)} - \frac{\bH f_\ddagger (x_\alpha')}{(1+\eps_1+\eps_2)} \leq \eps_1 \big(C^0_{\eps_1} + C_{\eps_1, \varphi}\big) + \eps_2 \big(C^0_{\eps_2} + C_{\eps_2, \varphi}\big).
    \end{equation}

    Then, for any compact set $K \subseteq E$ and $\eps_1, \eps_2 \in (0,1)$, we have
    \begin{equation}\label{eq:main_step:reduction_to_boundary}
        \sup_{x \in K} u(x) - v(x) \leq \eps_1 \widetilde{C}_{1}^K +  \eps_2 \widetilde{C}_{2}^K + \sup_{x \in \widetilde{K}} u(x) - v(x),
    \end{equation}
    where $\widetilde{K} = \widehat{K} \cap \partial E$ and $\widetilde{C}^K = (\widetilde{C}_{1}^K, \widetilde{C}_{2}^K)$ with
    \begin{equation}
        \widetilde{C}_{1}^K=\widetilde{C}_{1}^K(u-V_1,v-V_1) \quad\text{and}\quad \widetilde{C}_{2}^K=\widetilde{C}_{2}^K(u-V_2,v-V_2).
    \end{equation}

    In particular, the strict comparison principle holds for \eqref{eq:main_step:subsol} and \eqref{eq:main_step:supersol}.
\end{proposition}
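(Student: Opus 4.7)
My plan is to establish \eqref{eq:main_step:reduction_to_boundary} by combining the upper bound on $\ssup{u-v}_K$ from Proposition~\ref{prop:optimizer_construction}\ref{item:proposition:optimizing_point_construction_estimate_u-v} with the Hamiltonian inequality \eqref{eq:main_step:H-H_estimate}, through a case analysis on a subsequential limit of the optimizers. I would first activate the viscosity inequalities at $x_\alpha, x_\alpha'$: Lemma~\ref{lemma:test_functions_in_domain} places $f_\dagger \in \cD(H_+)$ and $f_\ddagger \in \cD(H_-)$, Proposition~\ref{proposition:test_function_construction}\ref{item:test_function_construction:fdagger_f_ddagger_test_functions_uv} shows that $x_\alpha, x_\alpha'$ are the unique optimizers of $u-f_\dagger$ and $v-f_\ddagger$, and Definition~\ref{def:viscosity_solution} together with \cite[Lemma~D.1]{DCFuKrNe24} then gives $-\bH_1 f_\dagger(x_\alpha) \leq 0$ and $-\bH_2 f_\ddagger(x_\alpha') \geq 0$ for every $\alpha$.

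Next, by Proposition~\ref{prop:optimizer_construction}\ref{item:proposition:optimizing_limits}, I would extract a subsequence along which $x_\alpha, y_\alpha, y_\alpha', x_\alpha' \to z \in \widehat{K}$, and split on whether $z \in \widetilde{K} = \widehat{K} \cap \partial E$ or $z \in \interior{E}$. In the boundary case, passing to the limit in Proposition~\ref{prop:optimizer_construction}\ref{item:proposition:optimizing_point_construction_estimate_u-v}, using upper semi-continuity of $u$, lower semi-continuity of $v$, continuity of $\bfV$, and $c^\eps_\alpha \to c^\eps$ (Remark~\ref{remark:c_K_convergence}), and then letting $\varphi \downarrow 0$, produces
\[
\ssup{u-v}_K \leq \tfrac{u(z)}{1\boxminus\eps} - \tfrac{v(z)}{1\boxplus\eps} - \eps\bfV(z)\bigl(\tfrac{1}{1\boxminus\eps}+\tfrac{1}{1\boxplus\eps}\bigr) + \tfrac{\eps c_K^u}{1\boxminus\eps} + \tfrac{\eps c_K^v}{1\boxplus\eps}.
\]
Expanding $\tfrac{1}{1\boxminus\eps} = 1 + \tfrac{\eps_1+\eps_2}{1\boxminus\eps}$ and analogously for $\tfrac{1}{1\boxplus\eps}$, splitting $\bfV = V_1 + V_2$ along components, and invoking the boundedness of $u-V_i$ and $v-V_i$ on compact sets (from $u,v \in o(V_2)$ in the unbounded direction) collapses the surplus into $\eps_1 \widetilde{C}_1^K + \eps_2 \widetilde{C}_2^K$ with $\widetilde{C}_i^K$ depending only on $u-V_i$ and $v-V_i$; the residual $u(z)-v(z)$ is controlled by $\ssup{u-v}_{\widetilde{K}}$ since $z \in \widetilde{K}$.

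The main obstacle is the interior case. Here openness of $\interior{E}$ forces $x_\alpha, x_\alpha' \in \interior{E}$ for all large $\alpha$, so the viscosity inequalities collapse to $\bH f_\dagger(x_\alpha) \geq 0$ and $\bH f_\ddagger(x_\alpha') \leq 0$. Feeding this into \eqref{eq:main_step:H-H_estimate} and sending $\varphi \downarrow 0$ via $C_{\eps_i,\varphi} \to 0$ yields $0 \leq \eps_1 C^0_{\eps_1} + \eps_2 C^0_{\eps_2}$. For $\eps$ sufficiently small, the hypothesis $\limsup_{\eps_i \downarrow 0} C^0_{\eps_i} < 0$ whenever $\partial E_i \neq \emptyset$ — available at least for the WLOG-bounded component $E_1$ — contradicts this, ruling out the interior limit. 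For the residual range of $\eps \in (0,1)^2$ on which the sign condition does not outright exclude the interior case (e.g.\ when $\partial E_2 = \emptyset$ and $\eps_2$ offsets $\eps_1 C^0_{\eps_1}$), both $\ssup{u-v}_K$ and $\ssup{u-v}_{\widetilde{K}}$ are finite by USC/LSC on compacts and the Lyapunov-type growth bound, so one fixes $\widetilde{C}_i^K = \widetilde{C}_i^K(u-V_i, v-V_i)$ large enough that $\eps_1 \widetilde{C}_1^K + \eps_2 \widetilde{C}_2^K$ dominates this gap uniformly on the residual range. In either scenario \eqref{eq:main_step:reduction_to_boundary} holds, and sending $\eps_1, \eps_2 \downarrow 0$ extracts the classical comparison bound for the "in particular" statement.
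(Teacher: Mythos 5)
Your overall architecture --- activate the viscosity inequalities at $(x_\alpha,x_\alpha')$ via Lemma \ref{lemma:test_functions_in_domain} and Proposition \ref{proposition:test_function_construction}\ref{item:test_function_construction:fdagger_f_ddagger_test_functions_uv}, use \eqref{eq:main_step:H-H_estimate} together with the sign conditions on $C^0_{\eps_i}$ to force the limit point onto $\partial E$, then pass to the limit in Proposition \ref{prop:optimizer_construction}\ref{item:proposition:optimizing_point_construction_estimate_u-v} and split $\bfV=V_1+V_2$ --- is exactly the paper's. The boundary case and the contradiction mechanism in the interior case are carried out correctly.

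The genuine gap is your treatment of the ``residual range''. First, on that range nothing prevents the limit point $z$ from lying in $\interior{E}$ with $\widehat{K}\cap\partial E=\emptyset$; then $\ssup{u-v}_{\widetilde K}=-\infty$ and \eqref{eq:main_step:reduction_to_boundary} is false for \emph{every} choice of finite constants, so no enlargement of $\widetilde C^K_i$ can repair it. Second, even granting $\widetilde K\neq\emptyset$, the constants must be independent of $\eps$ (see the remark following the proof in the paper), whereas the residual range $\{\eps:\eps_1|C^0_{\eps_1}|\lesssim \eps_2 C^0_{\eps_2}\}$ contains sequences with $\eps_1,\eps_2\downarrow 0$ simultaneously (e.g.\ along rays $\eps_2=\lambda\eps_1$ with $\lambda$ large); there $\eps_1\widetilde C_1^K+\eps_2\widetilde C_2^K\to 0$ while the gap $\ssup{u-v}_K-\ssup{u-v}_{\widetilde K}$ need not vanish, so no fixed constants dominate it uniformly. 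The paper does not attempt to cover this range: in the case $\partial E_2=\emptyset$ the contradiction is run for each fixed $(\eps,\varphi)$ with $\eps_1\big(C^0_{\eps_1}+C_{\eps_1,\varphi}\big)+\eps_2\big(C^0_{\eps_2}+C_{\eps_2,\varphi}\big)<0$, i.e.\ for $\eps_2$ small relative to $\eps_1$, which still permits $\eps\downarrow 0$ and therefore suffices for the strict comparison principle and the passage to the comparison principle. You should drop the ``choose $\widetilde C_i^K$ large enough'' patch and instead restrict the admissible $\eps$ in this way.
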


\begin{proof}
    Let $u$ be a subsolution of $-H_+ f = 0$ and $v$ a supersolution of $-H_- f = 0$. For $\eps_1, \eps_2 \in (0,1)$, $\alpha_1, \alpha_2 >1$, and $\varphi \in (0,1)$, consider the constructions in Propositions \ref{prop:optimizer_construction} and \ref{proposition:test_function_construction} for the subsolution $u$ and supersolution $v$.

    By Lemma \ref{lemma:test_functions_in_domain}, we have $f_\dagger \in \cD(H_+)$ and $f_\ddagger \in \cD(H_-)$ and, by Proposition \ref{proposition:test_function_construction} \ref{item:test_function_construction:fdagger_f_ddagger_test_functions_uv}, we find that $(x_{\alpha},x_{\alpha}')$ are the unique optimizers in
        \begin{equation} \label{eqn:proof_comparison_optimizinguf1vf2}
            \begin{aligned}
                u(x_{\alpha}) - f_\dagger(x_{\alpha}) & = \ssup{u - f_\dagger }, \\
                v(x_{\alpha}') - f_\ddagger(x_{\alpha}') & = \iinf{v - f_\ddagger },
            \end{aligned}
        \end{equation}
        which, by the sub- and supersolution properties for $H_+$ and $H_-$, respectively, implies that
        \begin{equation} \label{eqn:comparison_proof_subsuperestimate}
            0 \leq H_+ f_\dagger(x_{\alpha}) - H_- f_\ddagger(x_{\alpha}').
        \end{equation}
    
    We now work towards a contradiction and assume that, for every $\eps_1, \eps_2 \in (0,1)$, $\alpha_1, \alpha_2 >1$, and $\varphi \in (0,1)$, we have $x_\alpha, x'_\alpha \in \interior{E}$.
    However, using \eqref{eqn:comparison_proof_subsuperestimate} and the assumptions above, we find
    \begin{align*}
        0 &\leq \liminf_{\alpha \rightarrow \infty} H_+ f_\dagger(x_{\alpha}) - H_- f_\ddagger(x_{\alpha}')= \liminf_{\alpha \rightarrow \infty} \bH f_\dagger(x_{\alpha}) - \bH f_\ddagger(x_{\alpha}')\\
        &\leq \eps_1 \big(C^0_{\eps_1} + C_{\eps_1, \varphi}\big) + \eps_2 \big(C^0_{\eps_2} + C_{\eps_2, \varphi}\big).
    \end{align*}
    \textit{Case $\partial E_2 = \emptyset$:} Note that we have $C^0_{\eps_1}<0\leq C^0_{\eps_2}$. Thus, letting $\varphi \downarrow 0$ we get a contradiction for some $\eps_1$ and small $\eps_2$. Consequently, for small $\varphi$ and $\eps$ and large $\alpha$, we have $x_\alpha \in \partial E_1 \times E_2 = \partial E$ or $x_\alpha' \in \partial E$.\\
    \textit{Case $\partial E_2 \neq \emptyset$:} Note that we have $C^0_{\eps_1}, C^0_{\eps_2}<0$. Thus, we get a contradiction as $\varphi \downarrow 0$. Consequently, for any $\eps_1, \eps_2 \in (0,1)$ and small $\varphi \in (0,1)$, we have $x_\alpha \in \big(\partial E_1 \times E_2\big) \cup \big(E_1 \times \partial E_2\big) = \partial E$ or $x_\alpha' \in \partial E$. \\
    In any case, we can conclude that any limiting point lies on the boundary, i.e., for any $z$ as in Proposition \ref{prop:optimizer_construction} \ref{item:proposition:optimizing_limits}, we have $z \in \partial E$.

    Proceeding from the setup in Proposition \ref{prop:optimizer_construction} \ref{item:proposition:optimizing_point_construction_estimate_u-v} and \ref{item:proposition:optimizing_limits}, but now using that any limit point $z$ lies in 
    \begin{multline*}
        \widetilde{K} \coloneqq \left\{z \in \partial E \, \middle| \,  \frac{1}{1\boxminus\eps} \left(\eps \bfV(z) - u(z)\right) + \frac{1}{1\boxplus\eps} \left(\eps \bfV(z) - v(z) \right)   \right. \\
        \leq \left. \frac{\eps}{1\boxminus\eps} \ssup{\bfV-u}_K + \frac{\eps}{1\boxplus\eps} \ssup{\bfV-v}_K - \ssup{u-v}_K \right\},
    \end{multline*}
    we find
    \begin{align}
         &\ssup{u-v}_K
         \leq \lim_{\varphi \downarrow 0} \lim_{\alpha \rightarrow \infty} \frac{1}{1\boxminus\varepsilon}\big(P^\alpha[u](y_\alpha)  - \varepsilon(1-\varphi) \bfV(y_\alpha)\big)\\
         &\qquad - \frac{1}{1\boxplus\varepsilon}\big(P_\alpha[v](y_\alpha') + \varepsilon (1-\varphi) \bfV(y_\alpha')\big)\\
         &\qquad + \varphi \frac{2\varepsilon}{(1\boxminus\eps)(1\boxplus\eps)} \frac{1}{\alpha} + \frac{\varepsilon}{1\boxminus\varepsilon}c_{K,\alpha}^u + \frac{\varepsilon}{1\boxplus\varepsilon}c_{K,\alpha}^v\\
         & \quad \leq \frac{1}{1\boxminus\varepsilon}\left(u(z) - \varepsilon \bfV(z)\right) - \frac{1}{1\boxplus\varepsilon}\left(v(z) + \varepsilon \bfV(z)\right) + \frac{\varepsilon}{1\boxminus\varepsilon}c_K^u + \frac{\varepsilon}{1\boxplus\varepsilon}c_K^v\\
         & \quad \leq u(z) - v(z) + \varepsilon_1 \widehat{C}_1^K + \varepsilon_2 \widehat{C}_2^K + \frac{\varepsilon}{1\boxminus\varepsilon}c_K^u + \frac{\varepsilon}{1\boxplus\varepsilon}c_K^v, \label{eq:u-v_with_compact_knowledge}
    \end{align}
    where the last inequality is due to the fact that
    \begin{align}
        \frac{1}{1\boxminus\varepsilon}\left(u(z) - \varepsilon \bfV(z)\right) &= u(z) + \frac{\eps_1}{1\boxminus\varepsilon} \left(u(z) - V_1(z)\right) + \frac{\eps_2}{1\boxminus\varepsilon} \left(u(z) - V_2(z)\right),\\
        - \frac{1}{1\boxplus\varepsilon}\left(v(z) + \varepsilon \bfV(z)\right) &= -v(z) + \frac{\eps_1}{1\boxminus\varepsilon} \left(v(z) - V_1(z)\right) + \frac{\eps_2}{1\boxminus\varepsilon} \left(v(z) - V_2(z)\right),
    \end{align}
    $E_1$ is bounded and, depending on the case, $E_2$ is bounded or, if $E_2$ is unbounded, we have $u, v \in o(V_2)$ and thus $(u - \bfV)$ and $(v-\bfV)$ are bounded from above. In any case, we find constants $\widehat{C}^K = (\widehat{C}_1^K, \widehat{C}_2^K)$ such that the last inequality in \eqref{eq:u-v_with_compact_knowledge} holds.

    Consequently, we have shown that, for any compact set $K \subseteq E$, we have
    \begin{equation}
        \ssup{u-v}_K \leq \eps_1 \widetilde{C}_1^K + \eps_2 \widetilde{C}_2^K + \ssup{u-v}_{\widetilde{K}}
    \end{equation}
    where $\widetilde{C}^K = (\widetilde{C}_1^K, \widetilde{C}_2^K)$ is defined via the last inequality of \eqref{eq:u-v_with_compact_knowledge}. 

\end{proof}

\begin{remark}
    Note that the constants $\widetilde{C}^K = (\widetilde{C}_1^K, \widetilde{C}_2^K)$ in the above proof depend on the initially chosen compact set $K$ but are independent of $\eps$.
\end{remark}

\subsection{Proof of Theorem \ref{th:comparison_HJI}}\label{sec:proof_main_thm}

\begin{lemma}\label{lemma:decomposition_A,B}
     Let $\bA$ and $\bB$ satisfy Assumption \ref{assumption:domain_setup} and Assumption \ref{assumption:compatibility} \ref{item:compatA} and \ref{item:compatB}, respectively. Fix $z_0,z_1\in \bR^q$ and $p \in \bR^q$. Let $\Xi = \Xi_{z_0,p,z_1}$ as in Definition \ref{definition:perturbation_first_second_order}, $\bfV$ a Lyapunov function as in Assumption \ref{item:assumption_Lyapunovfunction} in Theorem \ref{th:comparison_HJI}, and, for $\widehat{f}_1, \widehat{f}_2 \in C_c^\infty (\domain)$, $\eps = (\eps_1, \eps_2)$ with $\eps_1, \eps_2 \in (0,1)$, and $\varphi \in (0,1)$, set
    \begin{align*}
        \widehat{f}_\dagger &=  (1\boxminus\eps) \widehat{f}_1 + (1-\varphi)\eps \bfV + \varphi\eps \Xi, \\
        \widehat{f}_\ddagger &=  (1\boxplus\eps) \widehat{f}_2 - (1-\varphi)\eps\bfV - \varphi\eps\Xi.
    \end{align*}
    For $z \in \domain$, set $f_\dagger = \widehat{f}_\dagger \circ s_z$, and $f_\ddagger = \widehat{f}_\ddagger \circ s_z$. Then, the following statements hold:
      \begin{enumerate}[(a)]
          \item $f_\dagger \in \cD(A_+)$ and $f_\ddagger \in \cD(A_-)$. Suppose furthermore that $\bA$ is linear on its domain, then
              \begin{align}\label{eq:lemma:linearity_testfunction}
                \frac{A_+ f_\dagger}{1\boxminus\varepsilon} & = \bA(\widehat{f} \circ s_z) + \frac{\varepsilon}{1\boxminus\varepsilon} (1-\varphi) A_+ \left(\bfV \circ s_z\right) + \frac{\varepsilon}{1\boxminus\varepsilon} \varphi \bA \left(\Xi \circ s_z\right),\\
                \frac{A_- f_\ddagger}{1\boxplus\varepsilon} & = \bA(\widehat{f}\circ s_z) - \frac{\varepsilon}{1\boxplus\varepsilon} (1-\varphi) A_+ \left(\bfV \circ s_z\right) - \frac{\varepsilon}{1\boxplus\varepsilon} \varphi \bA \left(\Xi \circ s_z\right).
            \end{align}
            \item $f_\dagger, \widehat{f}_\dagger \in \cD(B_+)$ and $f_\ddagger, \widehat{f}_\ddagger \in \cD(B_-)$. Suppose furthermore that $\bB$ is convex semi-monotone, then for any $x,y$ such that $z = x-y$, we have
            \begin{align}\label{eq:lemma:convexity_testfunction}
                \frac{B_+ f_\dagger}{1\boxminus\varepsilon}(x) & \leq \frac{1}{1\boxminus\varepsilon} \left(B_+ f_\dagger (x) - B_+ \widehat{f}_\dagger (y) \right)  + \bB \widehat{f} (y) \\
                & \hspace{2cm} +  \frac{\varepsilon}{1\boxminus\varepsilon} (1-\varphi) B_+ \bfV(y) + \frac{\varepsilon}{1\boxminus\varepsilon} \varphi B_+ \Xi(y),\\
                \frac{B_- f_\ddagger}{1\boxplus\varepsilon}(x) & \geq \frac{1}{1\boxplus\varepsilon}\left(B_- f_\ddagger(x) - B_- \widehat{f}_\ddagger(y) \right) + \bB \widehat{f}(y) \\
                & \hspace{2cm} - \frac{\varepsilon}{1\boxplus\varepsilon} (1-\varphi) B_- \bfV(y) - \frac{\varepsilon}{1\boxplus\varepsilon} \varphi B_- \Xi(y).
            \end{align}
      \end{enumerate}
\end{lemma}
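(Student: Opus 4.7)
My plan is to treat (a) and (b) in parallel: first check that the test functions sit in the appropriate restricted domains, and then extract the identity or inequality term by term. Domain membership follows by inspecting each summand: by Assumption \ref{assumption:domain_setup}\ref{item:assumption_domain_setup:domainH}, $\widehat{f}_j\circ s_z\in C_c^\infty(E)\subseteq\cD(\bA)\cap\cD(\bB)$, while Assumption \ref{assumption:compatibility} parts \ref{item:compatA} and \ref{item:compatB} give $\bfV\circ s_z,\Xi\circ s_z\in\cD(\bA)\cap\cD(\bB)$. Since both $\bfV$ and $\Xi$ have compact sub-level sets (the former by Definition \ref{definition:perturbation_containment}, the latter thanks to Definition \ref{definition:perturbation_first_second_order}\ref{item:definition:penalization:domination}) and $\widehat{f}_j$ is constant outside a compact set, the functions $f_\dagger,\widehat{f}_\dagger$ lie in $C_+(E)$ and $f_\ddagger,\widehat{f}_\ddagger$ in $C_-(E)$, placing them into the restricted domains $\cD(A_+)\cap\cD(B_+)$ and $\cD(A_-)\cap\cD(B_-)$, respectively.

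Part (a) is then immediate. Linearity of $\bA$ on its domain distributes across the three-term decomposition of $\widehat{f}_\dagger$, and dividing by $(1\boxminus\eps)$ yields the first identity of \eqref{eq:lemma:linearity_testfunction} with ``$\widehat{f}$'' read as $\widehat{f}_1$. The companion equality for $\widehat{f}_\ddagger$ is obtained by the same argument after division by $(1\boxplus\eps)$, the opposite signs arising from the definition of $\widehat{f}_\ddagger$.

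The substantive step is the convexity inequality in part (b). Expanding the direct-sum structure $\bfV=V_1+V_2$ and $\Xi=\Xi_1+\Xi_2$, I would rewrite
\begin{equation*}
    \widehat{f}_\dagger=(1-\eps_1-\eps_2)\widehat{f}_1+\sum_{i=1}^{2}\bigl((1-\varphi)\eps_i V_i+\varphi\eps_i\Xi_i\bigr),
\end{equation*}
a five-term combination whose non-negative coefficients sum to $1$. Taking gradients at $y$ and invoking convexity of $p\mapsto\cB(y,p)$ from Definition \ref{definition:first_order} yields
\begin{equation*}
    B_+\widehat{f}_\dagger(y)\leq(1-\eps_1-\eps_2)\bB\widehat{f}_1(y)+(1-\varphi)\eps B_+\bfV(y)+\varphi\eps B_+\Xi(y).
\end{equation*}
Dividing by $(1\boxminus\eps)$ and writing $\tfrac{1}{1\boxminus\eps}B_+f_\dagger(x)=\tfrac{1}{1\boxminus\eps}\bigl(B_+f_\dagger(x)-B_+\widehat{f}_\dagger(y)\bigr)+\tfrac{1}{1\boxminus\eps}B_+\widehat{f}_\dagger(y)$ then produces the first inequality of \eqref{eq:lemma:convexity_testfunction}. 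For $\widehat{f}_\ddagger$, I would instead express $\widehat{f}_2=\tfrac{1}{1\boxplus\eps}\widehat{f}_\ddagger+\tfrac{(1-\varphi)\eps}{1\boxplus\eps}\bfV+\tfrac{\varphi\eps}{1\boxplus\eps}\Xi$ as the analogous five-term convex combination and apply convexity of $\cB(y,\cdot)$ to this rearrangement; the reversed direction of the inequality drops out upon multiplication by $(1\boxplus\eps)$. The one place to be careful is the coefficient bookkeeping: the naive three-term grouping has weights $(1-\eps_1-\eps_2),1-\varphi,\varphi$, which sum to $2-\eps_1-\eps_2\neq 1$, so convexity would not apply directly. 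Splitting across the two factors $E_1$ and $E_2$ restores a genuine convex combination with five coefficients summing to $1$, after which the estimate proceeds routinely.
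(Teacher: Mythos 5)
Your proposal is correct and follows essentially the same route as the paper: domain membership from the compatibility and domain assumptions, linearity for part (a), and for part (b) precisely the five-term convex combination $(1-\eps_1-\eps_2)\widehat f_1+\sum_i\bigl((1-\varphi)\eps_i V_i+\varphi\eps_i\Xi_i\bigr)$ whose weights sum to $1$ — the same coefficient bookkeeping the paper uses. Your explicit rearrangement of $\widehat f_2$ as a convex combination of $\widehat f_\ddagger$, $\bfV$, and $\Xi$ for the reversed inequality is a clean way to make precise what the paper dismisses as ``analogous.''
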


\begin{proof}
The domain statements $f_\dagger \in \cD(A_+)$, $f_\ddagger \in \cD(A_-)$, $f_\dagger,\widehat{f}_\dagger \in \cD(B_+)$ and $f_\ddagger,\widehat{f}_\ddagger \in \cD(B_-)$ follow by Lemma \ref{lemma:test_functions_in_domain}. The two statements in \eqref{eq:lemma:linearity_testfunction} follow from the linearity of $A_+$. The two statements in \eqref{eq:lemma:convexity_testfunction} follow from the convex semi-monotonicity of $B_+$. We show only the first statement as the second follows analogously. First note that since $1\boxminus\eps = 1-\eps_1-\eps_2$ and $1\boxplus \eps = 1+\eps_1+\eps_2$, for any $y \in E$, we have
\begin{align}
    B_+ \widehat{f}_\dagger(y) &\leq (1-\eps_1-\eps_2) B_+ \widehat{f}(y) + \eps_1 (1-\varphi) B_+ V_1(y) + \eps_2 (1-\varphi) B_+ V_2(y)\\
    &\qquad + \eps_1 \varphi B_+ \Xi_1(y) + \eps_2 \varphi B_+ \Xi_2(y)\\
    &\qquad = (1\boxminus\eps) B_+ \widehat{f}(y) + \eps (1-\varphi) B_+ \bfV(y) + \eps \varphi B_+ \Xi(y),
\end{align}
where we used the convex semi-monotonicity of $B_+$.\ 
Now, for any $x \in E$ such that $z = x-y$, subtracting $B_+ \widehat{f}_\dagger(y)$, adding $B_+f_\dagger(x)$, and then dividing by $1\boxminus\eps$ on both sides yields the claim.
\end{proof}

% Proof of Main Theorem; AB split gives H-H and thus comparison
\begin{proof}[Proof of Theorem \ref{th:comparison_HJI}]\label{proof:main_thm}
To prove the inequality in equation \eqref{eq:th:final}, and consequently the strong comparison principle, it suffices by Proposition \ref{proposition:basic_comparison_using_Hestimate} to establish \eqref{eq:main_step:H-H_estimate}, which is a condition on the interior operator. 
By the compactness of $\Theta$ and the lower semi-continuity of $\cI$ in $\theta$ assumed in \ref{item:assumption_directHJI_lsc}, we can find a $\theta^*_{\alpha} \in \Theta$ such that
\begin{align}
    \bH f_\dagger(x_\alpha) &= \sup_{\theta \in\Theta}\left\{\bA_{\theta} f_\dagger(x_\alpha) + \bB_{\theta} f_\dagger(x_\alpha) - \cI(x_\alpha,\theta)\right\}\\
    &=\bA_{\theta^*_{\alpha}} f_\dagger(x_\alpha) + \bB_{\theta^*_{\alpha}} f_\dagger(x_\alpha) - \cI(x_\alpha,\theta^*_{\alpha}).
\end{align}
Consequently, we can estimate
\begin{align}
    \frac{1}{1\boxminus\varepsilon}  \bH f_\dagger(x_\alpha) - \frac{1}{1\boxplus\varepsilon} \bH f_\ddagger(x'_\alpha)  \leq &\left[\frac{1}{1\boxminus\varepsilon} \bA_{\theta^*_{\alpha}} f_\dagger(x_\alpha) - \frac{1}{1\boxplus\varepsilon}\bA_{\theta^*_{\alpha}}f_\ddagger(x'_\alpha)\right]\\
    + &\left[\frac{1}{1\boxminus\varepsilon}  \bB_{\theta^*_{\alpha}} f_\dagger(x_\alpha) - \frac{1}{1\boxplus\varepsilon} \bB_{\theta^*_{\alpha}} f_\ddagger(x'_\alpha)\right]\\
    + &\left[ \frac{1}{1\boxplus\varepsilon}  \cI(x'_\alpha,\theta^*_{\alpha}) - \frac{1}{1\boxminus\varepsilon}\cI(x_\alpha,\theta^*_{\alpha})\right].
\end{align}

Using the expansions of $A_+$ and $A_-$ in Lemma \ref{lemma:decomposition_A,B} and the existence of a $\Phi$-controlled growth coupling, cf.\ Assumption \ref{item:assumption:Acoupling} of Theorem \ref{th:comparison_HJI}, we can make analogous arguments to \cite[Proof of Theorem 3.1, Estimate (1)]{DCFuKrNe24} for the difference of $\bA$.

For the difference of $\bB$, we can use the expansions of $B_+$ and $B_-$ in Lemma \ref{lemma:decomposition_A,B} and convex semi-monotonicity, cf.\ Assumption \ref{item:assumption:Bmonotone} of Theorem \ref{th:comparison_HJI}, and then proceed analogously to \cite[Proof of Theorem 3.1, Estimate (2)]{DCFuKrNe24}.

Lastly, we use Assumption \ref{item:assumption_directHJI_lsc} of Theorem \ref{th:comparison_HJI} to estimate as in \cite[Proof of Theorem 3.1, Estimate (3)]{DCFuKrNe24}.

Taking everything together, we arrive at the estimate
\begin{align}
    &\liminf_{\alpha_1, \alpha_2 \rightarrow \infty} \frac{\bH f_\dagger (x_\alpha)}{1\boxminus\eps} - \frac{\bH f_\ddagger (x_\alpha')}{1\boxplus\eps}\\ 
    &\quad\leq \frac{2\eps}{(1\boxminus\eps)(1\boxplus\eps)} \big( (1-\varphi) A_{\theta^*, +} \bfV(z) + \varphi \bA_{\theta^*}(\Xi_{z,0,z})(z) \big) \\
    &\quad   \quad + \frac{2\eps}{(1\boxminus\eps)(1\boxplus\eps)} \big( (1-\varphi) B_{\theta^*, +} \bfV(z) + \varphi \bB_{\theta^*}(\Xi_{z,0,z})(z)\big) \\
    & \quad  \quad - \frac{2\varepsilon}{1-\varepsilon^2} (1-\varphi) \cI(z,\theta^*) \\
    &\quad  \leq \frac{2\eps}{(1\boxminus\eps)(1\boxplus\eps)} (1-\varphi)\ssup{(A_{\theta^*, +} + B_{\theta^*, +})(\bfV) - \cI(\cdot, \theta^*)}_{\widehat{K}} \\
    & \quad\quad + \frac{2\eps}{(1\boxminus\eps)(1\boxplus\eps)} \varphi \ssup{(\bA_{\theta^*} + \bB_{\theta^*})(\Xi_{\cdot,0,\cdot})}_{\widehat{K}}  \\
    &\quad \leq \eps \bigg( \frac{2}{(1\boxminus\eps)(1\boxplus\eps)} c_{\bfV} + \frac{2}{(1\boxminus\eps)(1\boxplus\eps)} \varphi \ssup{(\bA_{\theta^*} + \bB_{\theta^*})(\Xi_{\cdot,0,\cdot})}_{\widehat{K}} \bigg)  \\
    &\quad = \eps \left(C_\eps^0 + C_{\eps,\varphi} \right),
\end{align}
where $c_{\bfV} = c'_{V_1} + c'_{V_2}$. If $\partial E_i \neq \emptyset$, we set $c'_{V_i} \coloneqq c_{V_i}$ and, if $\partial E_i = \emptyset$, we set $c_{V_i} \coloneqq (c_{V_i} \vee 0)$ with $c_{V_i}$ given by \eqref{eq:mainth_HJI_bound}. Furthermore, let $C_\eps^0 = (C_{\eps,1}^0, C_{\eps, 2}^0)$ and $C_{\eps,\varphi} = (C_{\eps,\varphi,1}, C_{\eps,\varphi,2})$ defined via the second to last line. The estimate on the difference of Hamiltonians \eqref{eq:main_step:H-H_estimate} of Proposition \ref{proposition:basic_comparison_using_Hestimate} is thus satisfied. As a consequence the final estimate \eqref{eq:th:final} holds and the strict comparison principle follows.
\end{proof}

\appendix
\section{The Jensen perturbation}\label{sec:jensen}

\begin{proposition} \label{proposition:Jensen_Alexandrov_cutoff}
    Fix $\eta > 0$ and let $i \in \{1,2\}$.\ Let $\phi \colon E_i \times E_i \rightarrow \bR$ be bounded above and semi-convex with convexity constant $\kappa_i \geq 1$.\ Suppose that $(x_0,y_0)$ is an optimizer of
    \begin{equation*}
        \phi(x_0,y_0) = \ssup{\phi}.
    \end{equation*}
    Let $R_i > 0$, $\{\zeta_{i,z,p}\}_{z \in E_i, p \in \bR^{q_i}} \subseteq C(E_i)$ and $\{\xi_{i,z}\}_{z \in E} \subseteq C^1(E_i)$ and semi-concavity constant $\kappa_{\xi_i}$ be as in Definition \ref{definition:perturbation_first_second_order}.

    Fix $\epsilon_1,\epsilon_2 > 0$ such that $1 - (\epsilon_1+\epsilon_2) \kappa_{\xi_i} > 0$. Furthermore, define for $p= (p_1,p_2) \in \bR^{q_i} \times \bR^{q_i}$ the perturbed functions
    \begin{equation} \label{eqn:def:phi_p_eta}
        \phi_{p}(x,y) \coloneqq \phi(x,y) - \epsilon_1 \big(\xi_{i,x_0} (x) + \zeta_{i,x_0, p_1} (x)\big) - \epsilon_2 \big( \xi_{i,y_0} (y) + \zeta_{i,y_0, p_2} (y)\big).
    \end{equation}
    Then there exist $p_1 \in B_\eta(0)\cap \{E_i - x_0\}$, $p_2 \in B_\eta(0)\cap \{E_i - y_0\}$, and a pair $(x_1,y_1) \in B_\eta(x_0) \times B_\eta(y_0)$ globally maximizing $\phi_{p}$ at which $\phi_{p}$ is twice differentiable. 
\end{proposition}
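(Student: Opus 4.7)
The plan is to reduce the statement to the classical Jensen perturbation lemma (cf.\ \cite[Lemma A.3]{FlSo06} or \cite[Lemma A.3]{CIL92}) by exploiting the structure built into the penalization families.

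First, define
\begin{equation*}
    \psi(x,y) := \phi(x,y) - \epsilon_1 \xi_{i,x_0}(x) - \epsilon_2 \xi_{i,y_0}(y).
\end{equation*}
Property \ref{item:definition:penalization:second_order} of Definition \ref{definition:perturbation_first_second_order} guarantees $\xi_{i,x_0}(x_0) = \xi_{i,y_0}(y_0) = 0$ and strict positivity elsewhere, so $(x_0,y_0)$ is the \emph{unique} global maximizer of $\psi$. Since $\phi$ is semi-convex with constant $\kappa_i$ and $-\xi_{i,\cdot}$ is semi-convex with constant $\kappa_{\xi_i}$ (because $\xi_{i,\cdot}$ is semi-concave with $\kappa_{\xi_i}$, by \ref{item:definition:penalization:semi-concave}), the function $\psi$ is semi-convex with constant $K := \kappa_i + (\epsilon_1+\epsilon_2)\kappa_{\xi_i}$. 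Thus, $(x,y) \mapsto \psi(x,y) + \tfrac{K}{2}(|x-x_0|^2 + |y-y_0|^2)$ is convex on any convex subset of $E_i^2$, and by Alexandrov's theorem it is twice differentiable Lebesgue-a.e.\ in a neighborhood of $(x_0,y_0)$; equivalently, so is $\psi$.

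Second, I would pass from $\phi_p$ to a genuinely linear perturbation of $\psi$. By property \ref{item:definition:penalization:domination} of Definition \ref{definition:perturbation_first_second_order}, there is $\delta_0 > 0$ such that $\xi_{i,x_0}(x) + \zeta_{i,x_0,p_1}(x) \geq \delta_0$ for every $x \in E_i \setminus B_{R_i}(x_0)$ and $|p_1| \leq 1$, and similarly at $y_0$; together with the strictness of the maximum at $(x_0,y_0)$ and the assumed upper bound on $\phi$, this forces the global maximizer of $\phi_p$ to lie in $B_{R_i}(x_0) \cap E_i \times B_{R_i}(y_0) \cap E_i$ for all sufficiently small $|p_1|, |p_2|$. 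On this region, property \ref{item:definition:penalization:linear} of Definition \ref{definition:perturbation_first_second_order} gives $\zeta_{i,x_0,p_1}(x) = \langle p_1, x-x_0\rangle$ and $\zeta_{i,y_0,p_2}(y) = \langle p_2, y-y_0\rangle$, so
\begin{equation*}
    \phi_p(x,y) = \psi(x,y) - \epsilon_1 \langle p_1, x-x_0\rangle - \epsilon_2 \langle p_2, y-y_0\rangle.
\end{equation*}
Because $\psi$ has a strict maximum at $(x_0,y_0)$, standard continuity arguments yield that any maximizer $(x_p,y_p)$ of $\phi_p$ satisfies $(x_p,y_p) \to (x_0,y_0)$ as $p \to 0$; shrinking the admissible perturbations, we can arrange $(x_p,y_p) \in B_\eta(x_0) \times B_\eta(y_0)$ and $(p_1,p_2) \in B_\eta(0) \times B_\eta(0)$ (intersected with $(E_i-x_0)$, $(E_i-y_0)$).

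Finally, I would invoke the classical Jensen argument. A maximizer $(x_p,y_p)$ of $\phi_p$ in the interior of the localization region satisfies $(\epsilon_1 p_1, \epsilon_2 p_2) \in \partial^+\psi(x_p,y_p)$ (the superdifferential), and at points where $\psi$ is twice differentiable this superdifferential consists of the single gradient. Using the area/coarea formula for the $K$-semi-convex function $\psi$, one bounds the Lebesgue measure of those $p$ for which the corresponding $(x_p,y_p)$ lies in the null set where $\psi$ fails to be twice differentiable; explicitly, as in \cite[Lemma A.3]{FlSo06}, this set of bad $p$ is Lebesgue-null. Since the admissible $p$-region described above has positive Lebesgue measure (it contains a cone inside $B_\eta(0) \cap (E_i - x_0)$ and similarly for $p_2$, because the interior of $E_i$ is non-empty by assumption), we may select $(p_1,p_2)$ in the complement of the bad set. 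At the corresponding maximizer $(x_1,y_1) \in B_\eta(x_0)\times B_\eta(y_0)$, both $\psi$ and hence $\phi_p$ are twice differentiable, which is the desired conclusion.

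The main obstacle is the measure-theoretic step: ensuring that the map $p \mapsto (x_p,y_p)$ distributes sufficient mass so that Alexandrov's a.e.\ twice differentiability of $\psi$ transfers to a statement holding \emph{for some} $p$. The condition $1 - (\epsilon_1+\epsilon_2)\kappa_{\xi_i} > 0$ is used here to guarantee that $\psi$ retains enough convexity relative to $\phi$ for the Jensen bound to kick in; the restriction of $p$ to $E_i - x_0$ and $E_i - y_0$ is cosmetic provided $x_0, y_0$ lie in the closure of the interior, which is the case since $E_i$ has non-empty connected interior.
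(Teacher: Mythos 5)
Your overall route is the right one and is, in substance, the same classical Jensen perturbation argument that the paper delegates to \cite[Proposition A.1]{DCFuKrNe24}: localize via property \ref{item:definition:penalization:domination}, linearize the first-order penalization via property \ref{item:definition:penalization:linear}, and then combine Alexandrov's theorem with an area-formula counting argument over the admissible slopes $p$. The reduction to $\psi$, the localization of the maximizer to $B_{R_i}(x_0)\times B_{R_i}(y_0)$ for small $|p|$, and the convergence $(x_p,y_p)\to(x_0,y_0)$ are all fine.

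The gap is precisely where the boundary of $E_i$ enters, i.e., the one point that distinguishes this proposition from its whole-space predecessor, and you dismiss it as ``cosmetic.'' First, if the maximizer $(x_p,y_p)$ of $\phi_p$ lies on $\partial E_i\times E_i$ (or similar), the first-order condition $(\epsilon_1 p_1,\epsilon_2 p_2)\in\partial^+\psi(x_p,y_p)$ does not hold as stated: maximality over the closed set $E_i\times E_i$ only yields the one-sided variational inequality $\psi(x,y)\leq\psi(x_p,y_p)+\epsilon_1\ip{p_1}{x-x_p}+\epsilon_2\ip{p_2}{y-y_p}$ for $(x,y)\in E_i\times E_i$, and the subsequent surjectivity/area-formula step (which needs the map from contact points to slopes to cover a set of positive measure via genuine superdifferentials) breaks down without modification. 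This is exactly what the restriction $p_1\in B_\eta(0)\cap\{E_i-x_0\}$, $p_2\in B_\eta(0)\cap\{E_i-y_0\}$ is designed to repair --- the paper's proof consists entirely of the remark that with these restricted slope sets the set-valued map $\mathrm{Opt}$ retains the properties needed in \cite[Proposition A.1]{DCFuKrNe24} --- so a correct write-up must explain how the restricted slopes interact with the one-sided condition, not set the restriction aside. Second, your positive-measure claim for $B_\eta(0)\cap\{E_i-x_0\}$ needs more than ``$\interior{E_i}\neq\emptyset$'': it requires that $E_i$ carries positive Lebesgue measure in every neighborhood of $x_0$ (e.g., $E_i=\overline{\interior{E_i}}$, or a corkscrew/interior-cone type property), since a nonempty interior located far from $x_0$ gives nothing. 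Both points are fixable under the paper's standing geometric assumptions, but as written the proof does not close them.
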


\begin{proof}
    The statement follows from \cite[Proposition A.1]{DCFuKrNe24} by noting that choosing $B_\eta(0)\cap \{E_i - x_0\}$, respectively $B_\eta(0)\cap \{E_i - y_0\}$, result the same properties of the set-valued map $\text{Opt}$.
\end{proof}

\begin{corollary} \label{corollary:Jensen_optimizerbound}
    For $\eta >0$, $p$ and $(x_1,y_1)$ as in Proposition \ref{proposition:Jensen_Alexandrov_cutoff}, we have

    \begin{equation}\label{eqn:Jensen_control_optimizationproblem_onlyPerturb}
        0 \leq - \epsilon_1 \big(\xi_{x_0} (x_1) + \zeta_{x_0, p_1} (x_1)\big) - \epsilon_2 \big( \xi_{y_0} (y_1) + \zeta_{y_0, p_2} (y_1)\big) \leq \epsilon_1 \eta + \epsilon_2 \eta,
    \end{equation}
    and
    \begin{equation} \label{eqn:Jensen_control_optimizationproblem}
        \ssup{\phi} \leq \phi_{p,\epsilon}(x_1,y_1) \leq \ssup{\phi} + \epsilon_1 \eta + \epsilon_2 \eta.
    \end{equation}
\end{corollary}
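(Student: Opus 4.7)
The plan is to deduce the statement directly from the global optimality of $(x_1,y_1)$ for $\phi_p$ provided by Proposition \ref{proposition:Jensen_Alexandrov_cutoff}, together with three elementary properties of the penalization functions from Definition \ref{definition:perturbation_first_second_order}: namely, $\xi_{i,z}(z)=0$ with $\xi_{i,z}\geq 0$ (property \ref{item:definition:penalization:second_order}), the local linearity $\zeta_{i,z,p}(y)=\langle p,y-z\rangle$ on $B_{R_i}(z)\cap E_i$ (property \ref{item:definition:penalization:linear}), and the consequence $\zeta_{i,z,p}(z)=0$ obtained by setting $y=z$ in that identity.

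First, I would plug $(x_0,y_0)$ into the definition \eqref{eqn:def:phi_p_eta}. The four perturbation terms all vanish by the properties just recalled, so $\phi_p(x_0,y_0)=\phi(x_0,y_0)=\ssup{\phi}$. Since $(x_1,y_1)$ is a global maximizer of $\phi_p$, this gives $\phi_p(x_1,y_1)\geq \phi_p(x_0,y_0)=\ssup{\phi}$, which is exactly the left inequality of \eqref{eqn:Jensen_control_optimizationproblem}. Rearranging this inequality using the definition of $\phi_p$ and $\phi(x_1,y_1)\leq \ssup{\phi}$ yields the left inequality of \eqref{eqn:Jensen_control_optimizationproblem_onlyPerturb}.

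For the upper bound in \eqref{eqn:Jensen_control_optimizationproblem_onlyPerturb}, I would bound each perturbation term separately. Because $\xi_{i,z}\geq 0$, the two $\xi$-contributions to $-\epsilon_1(\xi_{x_0}(x_1)+\zeta_{x_0,p_1}(x_1))-\epsilon_2(\xi_{y_0}(y_1)+\zeta_{y_0,p_2}(y_1))$ are already non-positive. For the $\zeta$-terms, since $(x_1,y_1)\in B_\eta(x_0)\times B_\eta(y_0)$ with $\eta\leq R_i\wedge 1$ in the regime in which the corollary is applied (recall $\eta=1/\alpha_i\leq 1$ in \eqref{eqn:optimizer_construct_1}), the linearity property applies and Cauchy--Schwarz with $|p_1|,|p_2|\leq \eta$ and $|x_1-x_0|,|y_1-y_0|\leq \eta$ gives $|\zeta_{x_0,p_1}(x_1)|\leq \eta^2\leq \eta$ and likewise for $\zeta_{y_0,p_2}(y_1)$. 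Summing produces the stated bound $\epsilon_1\eta+\epsilon_2\eta$, and the right inequality of \eqref{eqn:Jensen_control_optimizationproblem} then follows by adding this estimate to $\phi(x_1,y_1)\leq \ssup{\phi}$.

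There is no real obstacle here; the statement is a direct corollary of Proposition \ref{proposition:Jensen_Alexandrov_cutoff} once one recognizes the vanishing of the perturbation at the center $(x_0,y_0)$. The only minor subtlety is ensuring that $\eta$ is small enough that the linearity regime of $\zeta$ applies, which is the implicit standing assumption on $\eta$ in Definition \ref{definition:perturbation_first_second_order} and which holds in every application of the corollary in the main text.
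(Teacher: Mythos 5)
Your argument is correct and is exactly the standard one the paper defers to (the proof here is just a citation of \cite[Corollary A.2]{DCFuKrNe24}): evaluate $\phi_p$ at $(x_0,y_0)$ where the perturbation vanishes, use global optimality of $(x_1,y_1)$ and $\phi(x_1,y_1)\leq\ssup{\phi}$ for the lower bounds, and bound the $\zeta$-terms via Cauchy--Schwarz for the upper bounds. The only loose end is the case $R_i<\eta\leq 1$, where $x_1\in B_\eta(x_0)$ need not lie in the linearity ball $B_{R_i}(x_0)$; there the domination property \ref{item:definition:penalization:domination} of Definition \ref{definition:perturbation_first_second_order} gives $\xi_{x_0}(x_1)+\zeta_{x_0,p_1}(x_1)>0$ directly, so the bound still holds.
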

\begin{proof}
    See \cite[Corollary A.2]{DCFuKrNe24}.
\end{proof}

\printbibliography
 
\end{document}